\documentclass[11pt]{article}

\usepackage[utf8]{inputenc}
\usepackage[english]{babel}
\usepackage{tabularx}

\usepackage{graphicx}
\usepackage{amsmath}
\usepackage{amsthm}
\usepackage{amssymb}
\usepackage{amsfonts}
\usepackage{amstext}
\usepackage{rotating}
\usepackage{latexsym}
\usepackage{epsfig}
\usepackage{pdfpages}
\usepackage{geometry}
\usepackage{dsfont}
\usepackage{xcolor}
\usepackage{stmaryrd}
\usepackage{tikz}
\usepackage{comment}
\usepackage{hyperref}
\usepackage{mathtools}
\usepackage[autostyle]{csquotes}
\usepackage{lineno}

\geometry{includehead,includefoot,
	left=2cm,right=2cm,
	top=2cm,bottom=2cm,
	headheight=1cm,headsep=1cm,
	footskip=1cm}


\newtheorem{them}{Theorem}
\newtheorem{pro}[them]{Proposition}
\newtheorem{cor}[them]{Corollary}
\newtheorem{lem}[them]{Lemma}
\newtheorem*{org}{Organisation of the paper}

\newtheorem{theoremA}{Theorem} 
\newtheorem{defipro}[them]{Definition/Proposition}

\theoremstyle{definition}
\newtheorem{defi}[them]{Definition}

\newtheorem{remq}[them]{Remark}
\newtheorem{ex}[them]{Example}
\newtheorem{nott}[them]{Notation}
\newtheorem{notdef}[them]{Notation/Definition}


\newcommand{\dcv}[1]{\xrightarrow[#1]{}}

\newcommand{\ens}[2]{\left\{ #1 ~;~#2 \right\} }

\newcommand{\g}{\gamma}
\renewcommand{\a}{\alpha}
\renewcommand{\b}{\beta}
\renewcommand{\d}{\delta}
\newcommand{\s}{\sigma}

\newcommand{\fd}{\textnormal{f.d.}}
\newcommand{\sgn}{\textnormal{sgn}}
\newcommand{\E}{\mathbb{E}}

\newcommand{\Q}{\mathbb{Q}}
\newcommand{\R}{\mathbb{R}}
\newcommand{\N}{\mathbb{N}}
\newcommand{\Z}{\mathbb{Z}}

\newcommand{\C}{\mathbb{C}}

\newcommand{\A}{\mathcal{A}}

\newcommand{\FF}{\mathcal{F}}
\newcommand{\CC}{\mathcal{C}}
\newcommand{\BB}{\mathcal{B}}

\newcommand{\GG}{\mathcal{G}}

\newcommand{\PP}{\mathcal{P}}

\newcommand{\MM}{\mathcal{M}}
\newcommand{\NN}{\mathcal{N}}

\renewcommand{\SS}{\mathcal{S}}
\newcommand{\ee}{\mathbb{\varepsilon}}
\newcommand{\1}{\mathds{1}}

\newcommand{\ie}{\textnormal{i.e.}}

\newcommand{\Marg}{\textnormal{Marg}}

\newcounter{numeroexo}
\newcommand{\comm}[1]{\textcolor{red}{#1}}

\newcommand{\ti}[1]{\tilde{#1}}

\newcommand{\Tr}{\textnormal{Tr}}
\newcommand{\proj}{\textnormal{proj}}
\newcommand{\GL}{\textnormal{GL}}
\newcommand{\n}[2]{\left\|#1\right\|_{#2}}

\newcommand{\accol}[1]{\left\{\begin{array}{l}#1\end{array}\right .}
\newcommand{\ent}[2]{\llbracket #1,#2\rrbracket}

\begin{document}
	\pagestyle{plain}
	\vspace{.8cm}
	\begin{center}
		{\huge On the Markov transformation of Gaussian processes\\} \vspace{0.5cm}
		{\Large Armand Ley\\}
		\vspace{0.5cm}
		October 2024
	\end{center}

	\textbf{Abstract:} {\small 
	Given a Gaussian process $(X_t)_{t \in \R}$, we construct a Gaussian \emph{Markov} process with the same one-dimensional marginals using sequences of transformations of $(X_t)_{t \in \R}$ ``made Markov'' at finitely many times. We prove that there exists at least such a Markov transform of $(X_t)_{t \in \R}$. In the case the instantaneous decorrelation rate of $(X_t)_{t \in \R}$ is continuous, we prove that the Markov transform is uniquely determined and characterized  through the same instantaneous decorrelation rate.	}
	\vspace{2cm}
	
	\section{Introduction}\label{sec:Intro}
	\subsection{Context and main results}
	 During the last decades, partly under the impulsion of mathematical  finance, the question of mimicking stochastic processes  has become a recurrent problem. Put in a very general way, the problem can be expressed as follows: Given a stochastic process $(X_t)_{t \in T}$, one can ask if there exists a process $(Y_t)_{t \in T}$ preserving certain properties of $(X_t)_{t \in T}$ while satisfying some additional conditions. To motivate our problem of mimicking Gaussian processes, we now present a selection of four mimicking problems: 
	\begin{enumerate}
		\item The Kellerer problem of mimicking a martingale with a \emph{Markov} martingale process;
		\item The Gyöngy problem of mimicking an Itô process by the solution of a SDE;
		\item The problem of faking Brownian motion;
		\item The problem of mimicking an $\R^2$-valued process by an $\R^2$-valued order-preserving Markov process.
	\end{enumerate} 
	We shall then expose with more details the problem of Boubel and Juillet about mimicking an increasing process for the stochastic order with a Markov process that has non-decreasing trajectories. This mimicking problem is the most important one for this article, as Markov transformation, the construction method used to build its solution, is central in our article.
	
	In a seminal article, Strassen \cite{strassen_existence_1965} investigated if, given a coupling $(X_t)_{t \in \{1,2\}}$, there exists a coupling $(Y_t)_{t \in \{1,2\}}$ with the same $1$-marginals satisfying the martingale property, \ie , \ $\E(Y_2|Y_1) = Y_1$. He proved that such a coupling exists if and only if $X_1$ is smaller than $X_2$ for the convex order, \ie ,\ $\E(f(X_1)) \leq \E(f(X_2))$ for every  function $f$ that is convex\footnote{We refer to \cite{muller_comparison_2002,shaked_stochastic_2007} for more information about stochastic orders.}. Kellerer \cite{kellerer_markov-komposition_1972} generalized this result: A real-valued process $(X_t)_{t \in \R}$ can be mimicked by a \emph{Markov} martingale having the same $1$-marginals if and only if its components are increasing for the convex order (see also \cite{beiglbock_root_2016,boubel_markov-quantile_2022,hirsch_kellerers_2015}). In a different vein, Gyöngi \cite{gyongy_mimicking_1986}, using an approach suggested by Krylo \cite{krylov_once_1985},  showed that any Itô process $(X_t)_{t \in \R}$ with coefficients satisfying certain conditions can be mimicked by a Markov process which is a solution of a stochastic differential equation and which has the same one-dimensional marginals. Another mimicking problem is to fake the Brownian motion, that is, to find a non-Brownian process that has as many common features with the Brownian motion as possible. It is known (see e.g. \cite{lowther_fitting_2008}) that the Brownian motion is the only continuous martingale with Brownian marginals that satisfies the \emph{strong} Markov property. Beiglböck and \emph{al.} \cite{beiglbock_faking_2024}, in the line of previous articles (see their introduction for numerous references) finally proved that there exists a non-Brownian continuous martingale with Brownian marginals that is Markov. More recently, for processes valued in product spaces, Bérard and Frénais \cite{berard_comonotone_2024}  studied the case of a homogeneous Markov process $(X^{x_1}_t, X^{x_2}_t)_{t \geq 0}$ starting at $x_1 \leq x_2$ and whose marginal processes $ (X^{x_i}_t)_{t \geq 0}$ are governed by the same stochastically monotone Feller semi-group. They showed that it can be mimicked by a Feller process $(Y^{x_1}_t, Y^{x_2}_t)_{t \geq 0}$ starting at $(x_1,x_2)$, satisfying $\text{Law}\left((X^{x_i}_t)_{t \geq 0} \right) = \text{Law}\left((Y^{x_i}_t)_{t \geq 0}\right)$ for $i = 1,2$ and $Y_t^{x_1} \leq Y_t^{x_2}$ for every $t \in \R_+.$ 
	
	Boubel and Juillet \cite{boubel_markov-quantile_2022} studied the problem of mimicking a stochastic process  $(X_t)_{t \in \R}$ by a \emph{Markov} process with the same $1$-marginals and with non-decreasing trajectories. If we ignore the Markov property, it is well known that there exists a solution if and only if the marginals are increasing for the stochastic order, \ie , $\E(f(X_s)) \leq \E(f(X_t))$ for every $s<t \in \R^2$ and $f : \R \to \R$ non-decreasing. An explicit solution is then given by the quantile process, \ie , \ by $(G_t(U))_{t \in \R}$ where $U$ is a uniformly distributed random variable on $[0,1]$ and  $G_t : q \in ]0,1[ \mapsto \inf \big( \ens{x \in \R}{\mu_t(]-\infty,x]) \geq q} \big) \in \R$ stands for the quantile function of $\mu_t.$ However, as shown in \cite{juillet_peacocks_2016}, the quantile process is not Markov in general. To describe how their solution to the mimicking problem with the Markov property is obtained, we have to introduce the notions of ``transformation of a process made Markov at certain times'' and of ``Markov transforms''. Given a finite set of times $R \subset \R$ and a process $X = (X_t)_{t \in \R}$, we say  that a process $X^R$ is the\footnote{As all the processes made Markov at times $R$ follow the same law, we will talk about {\emph{the}} transformation made Markov at times $R$. A rigorous definition of Markov transforms will be given in Definition \ref{def:made_markov}.} transformation of $X$ made Markov at times $R$ if: 
	\begin{itemize}
		\item On every interval between two successive times of $R$, $X^R$ and $X$ have the same law;
		\item For every $r \in R$, $X^R$ is made Markov at time $r$: If one knows the value of the trajectory at time $r$, then the future $(X_t^R)_{t>r}$ of the trajectory  does not depend on the past  $(X_t^R)_{t<r}$ of the trajectory.
	\end{itemize}
	A \emph{Markov transform of $X$} is then a Markov process $X'$ obtained as the limit (for the finite-dimensional topology) of a sequence of processes $\left(X^{R_n}\right)_{n \geq 1}$, with $(R_n)_{n \geq 1}$  an admissible\footnote{If we denote by $\s_{R} := \sup_{x \in \R} d(x, R \setminus \{x\})$ the mesh of a finite set $R \subset \R$, a sequence of sets of times $(R_n)_{n \geq 1}$ is admissible if : $\lim_{n \to + \infty} \inf(R_n) = - \infty$, $\lim_{n \to + \infty} \sup(R_n) = + \infty$ and $\lim_{n \to + \infty} \s_{R_n} = 0$.} sequence of sets of times. Their solution to the mimicking problem, called Markov-quantile process, is then obtained as a Markov transform of the quantile process. Since $X^R$ has the same $1$-marginals as $X$, a Markov transform is a Markov process that has the same $1$-marginals as $X$. This draws a general construction of mimicking processes. The hope is that making a process Markov at certain times and passing to the limit preserves certain property of the original process while adding the Markov property. In the article, we study this construction in the case of Gaussian processes, confirming its interests but also highlighting its limitations (see Section \ref{sec:counterexample}). As expected, it turns out that Markov transforms of ``regular'' Gaussian processes are solutions of the mimicking problem presented hereby in Theorem \ref{them:mimicking_intro}. Assume we have a Gaussian process $X = (X_t)_{ t \in \R}$ with a continuous covariance function $K$ and an instantaneous decorrelation rate (or instantaneous decay rate of the correlation) $\a^K$ given by
	  \begin{equation}\label{eq:IDR}
		\a^K(t) := \lim_{h \to 0^+} \frac{1}{h}\left(1 - \frac{K(t,t+h)}{\sqrt{K(t,t)}\sqrt{K(t+h,t+h)}} \right),
	\end{equation}
	that is well defined\footnote{As we will see in Section \ref{sec:counterexample}, the decay rate of the correlation does not always converge when $h$ goes to $0^+$.} and continuous. The mimicking problem of interest is to find a Gaussian \emph{Markov} process that meets the $1$-marginals of $X$ and has the same instantaneous decorrelation rate. The following result solves this problem and establishes that, under a reinforced condition, the solution of this mimicking problem is a Markov transform.

	\begin{theoremA}\label{them:mimicking_intro}
		Let $X=(X_t)_{t \in \R}$ denote a Gaussian process with  continuous covariance function $K$ and positive variance function. Assume $\a^K$ (recall \eqref{eq:IDR})  is well defined and continuous. 		
		\begin{enumerate}
			\item\label{Existence} \emph{Existence:} There exists a Gaussian process $Y =(Y_t)_{t \in \R}$ with covariance function $K'$ satisfying:
			\begin{itemize}
				\leftskip=0.3in
				\item[(1)] For every $t \in \R$, $\text{Law}(X_t) = \text{Law}(Y_t)$;
				\item[(2)] The process $Y$ has the same instantaneous decorrelation rate as $X$, \ie , \  \[\lim_{h \to 0^+} \frac{1}{h}\left(1 - \frac{K'(t,t+h)}{\sqrt{K'(t,t)}\sqrt{K'(t+h,t+h)}} \right)= \a^{K}(t);\] 
				\item[(3)] $(Y_t)_{t \in \R}$ is a Markov process.
			\end{itemize}
			
			In the following of the article, if a Gaussian process $Y$ satisfies $(1),(2)$ and $(3)$, we allow ourselves to say that $Y$ is a \emph{mimicking process} of $X.$
			\item\label{Point:Uniqueness} \emph{Uniqueness in law:} Every mimicking process of $X$ with covariance function $K' :\R^2 \to \R$ has the same mean function as $(X_t)_{t \in \R}$ and, for every $s<t \in \R^2$,
			\begin{equation*}\label{eq:transfor_formule_cov}
				K'(s,t) = K(s,s)^{1/2}K(t,t)^{1/2} \exp\left(-\int_s^t \a^K(u) du\right).
			\end{equation*}
			
			\item\label{Point:mim_markov_transf} \emph{The mimicking process is a Markov transform (under the following reinforced hypothesis):} Assume 
			\begin{equation}\label{eq:correlation}
				\sup_{v \in [s,t]} \left| \a^K(v) - \frac{1}{h}\left(1 - \frac{K(v,v+h)}{\sqrt{K(v,v)}\sqrt{K(v+h,v+h)}} \right) \right| \dcv{h \to 0^+} 0
			\end{equation} for every $s<t \in \R^2$. Let  $(R_n)_{n \geq 1} \in \A$ be an admissible sequence and $Y$ be the mimicking process of $X$ (see Point $1$ and $2$). For every $n \geq 1$, we denote by $X^{R_n}$ the transformation of $X$ made Markov at times $R_n$. Then $(X^{R_n})_{n\geq 1}$ and $Y$ almost surely have continuous paths and $X^{R_n}$ converges weakly to $Y$ on compact sets.
		\end{enumerate}
	\end{theoremA}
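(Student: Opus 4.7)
The plan is to treat the three parts in the order (\ref{Point:Uniqueness}), (\ref{Existence}), (\ref{Point:mim_markov_transf}), since the formula for $K'$ obtained in the uniqueness step is the natural candidate for existence. For uniqueness, I would exploit the fact that for a Gaussian process $Y$ with positive variance, the Markov property is equivalent (after centering) to the regression identity $\E[Y_u \mid Y_s, Y_t] = \E[Y_u \mid Y_t]$ for $s<t<u$, which translates into the multiplicative identity $K'(s,u)K'(t,t) = K'(s,t)K'(t,u)$. This forces $\rho'(s,t) := K'(s,t)/\sqrt{K'(s,s) K'(t,t)}$ to be of the form $e^{-(\phi(t)-\phi(s))}$ for some function $\phi$, and condition (2) combined with continuity of $K'$ then forces $\phi'(t) = \a^K(t)$. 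Together with condition (1), which gives $\E[Y_t] = \E[X_t]$, this uniquely determines both the mean and the covariance.

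For existence, I would simply build an explicit realisation of the covariance obtained above. Cauchy--Schwarz applied to $K$ gives $\rho^K \leq 1$, hence $\a^K \geq 0$, so that $\tau(t) := \int_0^t \a^K(u)\,du$ is continuous and non-decreasing. Taking $Z = (Z_r)_{r \in \R}$ a stationary Ornstein--Uhlenbeck process with covariance $e^{-|r-r'|}$ and $m$ the mean of $X$, the process $Y_t := m(t) + \sqrt{K(t,t)}\, Z_{\tau(t)}$ is Gaussian, has the prescribed mean and covariance, and is Markov as a pointwise-invertible time-dependent rescaling of the Markov process $t \mapsto Z_{\tau(t)}$.

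For point (\ref{Point:mim_markov_transf}), the starting point is to compute the covariance of $X^{R_n}$. Since the transformation made Markov at times $R_n$ is Gaussian and has the same law as $X$ on each interval of $\R \setminus R_n$, iterating the Gaussian regression formula gives, for $s<t$ with $s \leq r_i < \cdots < r_j \leq t$ the successive points of $R_n \cap [s,t]$,
\[
\rho_{X^{R_n}}(s,t) \;=\; \rho^K(s,r_i) \prod_{\ell=i}^{j-1} \rho^K(r_\ell, r_{\ell+1})\, \rho^K(r_j,t),
\]
where $\rho^K(a,b) := K(a,b)/\sqrt{K(a,a) K(b,b)}$. Taking logarithms, writing $\log(1-x) = -x + O(x^2)$, and invoking the reinforced hypothesis \eqref{eq:correlation} together with continuity of $\a^K$, the sum in the right-hand side becomes a uniform Riemann sum which converges, uniformly in $(s,t)$ on compact sets, to $-\int_s^t \a^K(u)\,du$; the boundary terms $\log\rho^K(s,r_i)$ and $\log\rho^K(r_j,t)$ vanish since $r_i - s, t - r_j \leq \s_{R_n} \to 0$, and the quadratic remainder is controlled by $\max_\ell (1-\rho^K(r_\ell,r_{\ell+1})) \cdot \sum_\ell (1-\rho^K(r_\ell,r_{\ell+1})) \to 0$. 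Combined with continuity of $t \mapsto K(t,t)$ and the fact that $\E[X^{R_n}_t] = \E[X_t]$ for all $n$, this gives finite-dimensional convergence of $X^{R_n}$ to $Y$.

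The main obstacle is to promote finite-dimensional convergence to weak convergence on $C([a,b],\R)$ for every compact $[a,b] \subset \R$. The analysis just performed yields a uniform bound $1 - \rho_{X^{R_n}}(s,t) \leq C\,|t-s|$ on $[a,b]$, from which $\var(X^{R_n}_t - X^{R_n}_s) \leq C|t-s|$ uniformly in $n$. As $X^{R_n}_t - X^{R_n}_s$ is centered Gaussian, one deduces $\E|X^{R_n}_t - X^{R_n}_s|^4 \leq C'|t-s|^2$ with $C'$ depending only on $[a,b]$, and the Kolmogorov--Chentsov criterion then delivers both almost-sure path continuity of $X^{R_n}$ and of the limit $Y$ as well as tightness in $C([a,b],\R)$. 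Together with the finite-dimensional convergence obtained above, this concludes the proof.
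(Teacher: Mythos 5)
Your proposal is correct and follows essentially the same route as the paper: uniqueness via the multiplicativity of the correlation forced by the Markov property combined with condition (2), existence via the exponential covariance $\sqrt{K(s,s)K(t,t)}\exp(-\int_s^t\a^K)$ (your time-changed Ornstein--Uhlenbeck realisation is exactly the paper's construction of $P_{\a}$, cf.\ its closing remark), and Point 3 via the product formula for $\rho_{X^{R_n}}(s,t)$, a uniform Riemann-sum estimate under \eqref{eq:correlation}, the Lipschitz bound $1-\rho_{X^{R_n}}(s,t)\le C|t-s|$, and the Kolmogorov--Chentsov and tightness criteria. The only step you elide is the positivity of $\rho'$ needed to write it as $e^{-(\phi(t)-\phi(s))}$: the paper secures this by citing the Mehr--McFadden factorisation of continuous Markov Gaussian covariances, though it also follows from your multiplicative identity together with continuity and $\rho'(t,t)=1$.
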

	
	Note that, for every $s<t \in \R^2$, the correlation of $(Y_s,Y_t)$ is inversely proportional to the exponential value of the sum of the instantaneous decorrelation rate from $s$ to $t.$ Hence, informally, if $X$ is highly correlated between $s$ and $t$, the instantaneous decorrelation rate of $X$ will be small between $s$ and $t$, so that the correlation coefficient of $(Y_s,Y_t)$ will be high. Theorem \ref{them:mimicking_intro} will be proved in Theorem \ref{them:mimicking}, where we also prove that our mimicking process is the solution of a given stochastic differential equation (SDE). According to Theorem \ref{them:mimicking_intro}, under hypothesis \eqref{eq:correlation}, the general method of making a process Markov at certain times and passing to the limit behaves well: Our process $(X_t)_{t \in \R}$ admits a Markov transform and this Markov transform is a Markov process with the same $1$-marginal as $X$ that preserves its instantaneous decorrelation rate. However, in general, there is no reason why a process should admit a Markov transform and, if it does, no reason why it should be unique. Given a process $X$, there is no guarantee that one can find an admissible sequence $(R_n)_{n \geq 1}$ and a Markov process $X'$ such that $\lim_{n \to +\infty} X^{R_n} = X'$, nor that it is impossible to find two admissible sequences $(R_n^1)_{n \geq 1}$, $(R_n^2)_{n \geq 1}$ and two distinct Markov processes $X'^1$, $X'^2$ satisfying $\lim_{n \to + \infty} X^{R_n^1} = X'^1$ and $\lim_{n \to + \infty} X^{R_n^2} = X'^2$. As we will see, it is natural to distinguish two notions of Markov transform: $X'$ is a \emph{strong} Markov transform if \emph{for every} admissible sequence $(R_n)_{n \geq 1}$, $\lim_{n \to +\infty} X^{R_n} = X'$, whereas $X'$ is a \emph{weak} Markov transform of $X$  if \emph{there exists} an admissible sequence $(R_n)_{n \geq 1}$ such that $\lim_{n \to +\infty} X^{R_n} = X'$. We shall see that it is easier to study a local version of Markov transforms. Instead of requiring that $X'$ is the limit of $X^{R_n}$ for an admissible sequence $(R_n)_{n \geq 1}$, we rather require that, for each pair of times $s<t \in \R^2$, there exists a sequence of partitions $(R_n^{s,t})_{n \geq 1}$ of $[s,t]$ with mesh size going to $0$ and such that the laws of $\left( X^{R_n^{s,t}}_s,X^{R_n^{s,t}}_t \right)_{n \geq 1}$ converge to the law of $(X_s',X_t').$ In this case, we say that $X'$ is a \emph{weak local Markov transform} of $X.$ If every sequence of partitions $(R_n^{s,t})_{n \geq 1}$ leads to convergence, we say that $X'$ is a \emph{strong local Markov transform} of $X.$ This local version of Markov transform is less stringent than the former global version of Markov transform, as we just ask for the convergence of the two-dimensional laws and, more importantly, the time sets dependence on $(s,t)$ is allowed\footnote{We finally end up with four notions of Markov transforms: weak local Markov transform, strong local Markov transform, weak global Markov transform and strong global Markov transform.}. 
	
	Assuming condition \eqref{eq:correlation}, Theorem \ref{them:mimicking_intro} implies that there that there exists a (unique) strong global Markov transform that is characterized as the unique solution of our mimicking problem. Similarly, Boubel and Juillet  showed that every quantile process admits a unique weak (not strong) global Markov transform (the Markov-quantile process) and they give a characterization of this process in terms of stochastic orders. Hence, they asked \cite[$\mathsection 5.5.1$, Open Question $a)$]{boubel_markov-quantile_2022} whether the weak local Markov transform of a process (when it exists) is always unique and, if it is, how to characterize it. The following result shows that a (stationary) Gaussian  process can admit infinitely many weak local Markov transforms and undermines the hope to find a nice characterization of the set of weak local Markov transforms in general.
	\begin{theoremA}\label{them:counterexample_intro}
		There exists a stationary Gaussian process $(X_t)_{t \in \R} $ whose set of weak local Markov transforms is the set of all  the processes $(Y_t)_{t \in \R}$ satisfying:
		\begin{enumerate}
			\item The process $(Y_t)_{t \in \R}$ is centered Gaussian with constant variance function equal to $1$;
			\item The covariance function of $(Y_t)_{t \in \R}$ is non-negative;
			\item The process $(Y_t)_{t \in \R}$ is Markov.
		\end{enumerate} 
	\end{theoremA}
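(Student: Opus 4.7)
The plan is to build on the characterization of weak local Markov transforms for stationary Gaussian processes presumably developed earlier in the paper, under which a Gaussian Markov process $Y$ is a weak local Markov transform of $X$ exactly when, for every $s<t$, its two-dimensional covariance $K_Y(s,t)$ arises as the limit of products $\prod_i \rho(r_{i+1}-r_i)$ along partitions $s=r_0<r_1<\cdots<r_m=t$ of $[s,t]$ with vanishing mesh, where $\rho$ denotes the correlation function of $X$. The flexibility in choosing such partitions is controlled by the set of cluster points of $h \mapsto (1-\rho(h))/h$ at $0^+$. The proof therefore splits into: identify the abstract form of the target class, and then produce a stationary $X$ whose correlation function has a sufficiently rich cluster-point set at the origin.

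\textbf{Classification of the target class.} Any $Y$ satisfying $(1),(2),(3)$ has unit variance, and the Gaussian Markov property forces the multiplicativity $K_Y(s,u)=K_Y(s,t)\,K_Y(t,u)$ for all $s<t<u$. Combined with non-negativity, this yields $K_Y(s,t)\in[0,1]$, and on the range where $K_Y>0$ taking $-\log$ produces an additive function, giving $K_Y(s,t)=\exp(G(s)-G(t))$ for some non-decreasing $G:\R\to\R\cup\{+\infty\}$ (with the convention $e^{-\infty}=0$ to handle independent pairs). Conversely, any such $G$ defines a unique centered Gaussian Markov process with unit variance and non-negative covariance. Thus the target class is in bijection (modulo additive constants on $G$) with non-decreasing $G$.

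\textbf{Construction of $X$ and realization of each target.} I would take $X$ stationary and centered with correlation function
\[
\rho(h)=\sum_{n\geq 1}\lambda_n\, e^{-\b_n |h|},
\]
where $\lambda_n>0$, $\sum_n\lambda_n=1$, and $(\b_n)$ is a rapidly increasing sequence of positive rates. Positive definiteness is automatic since each $e^{-\b_n|h|}$ is an Ornstein--Uhlenbeck covariance. By a careful quantitative choice of $(\lambda_n,\b_n)$ one arranges, on a geometric sequence of scales $h_n\to 0^+$, that the slopes $(1-\rho(h_n))/h_n$ take prescribed values dense in $[0,+\infty]$, with both endpoints $0$ and $+\infty$ attained as cluster points. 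Given any target $Y$ with covariance $\exp(G(s)-G(t))$, and any $s<t$, one then chooses, using the cluster-point richness, a partition of $[s,t]$ with subinterval lengths $h_{i,n}$ such that $\sum_i(1-\rho(h_{i,n}))\to G(t)-G(s)$; the corresponding product $\prod_i \rho(h_{i,n})$ converges to $\exp(G(s)-G(t))=K_Y(s,t)$, exhibiting $Y$ as a weak local Markov transform of $X$. Conversely, any weak local Markov transform of $X$ is Gaussian, centered, Markov, of unit variance, and has covariance obtained as a limit of products of values of $\rho\in(0,1]$, hence non-negative, so belongs to the target class.

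\textbf{Main obstacle.} The delicate part is the quantitative construction of $\rho$: one must realize the full extended interval $[0,+\infty]$ as the cluster-point set of $(1-\rho(h))/h$ at $0^+$ while preserving positive definiteness. Naive bump-type perturbations would typically destroy positivity, which is why the convex-combination-of-exponentials form is essential; the technical work lies in balancing the weights $\lambda_n$ against the rates $\b_n$ so that, at each scale $h_n\sim 1/\b_n$, the contributions of far terms (both $\b_m\ll\b_n$ and $\b_m\gg\b_n$) can be controlled precisely enough to prescribe the slope and, crucially, to reach the extreme cluster values $0$ and $+\infty$.
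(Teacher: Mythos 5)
Your overall architecture (reduce to cluster points of the decay rate $h\mapsto(1-\rho(h))/h$ at $0^+$, identify the target class via multiplicativity of Gaussian Markov covariances, then invoke the weak-transform identification criterion for each cluster value) matches the paper's, which applies Theorem \ref{them:Markov_stationnaire_bis} pairwise with $\a=-|t-s|^{-1}\ln K'(s,t)$. But the construction at the heart of your proof fails. If $\rho(h)=\sum_{n}\lambda_n e^{-\b_n|h|}$ with $\lambda_n>0$, then $\rho$ is convex on $[0,+\infty)$ (each summand is), so the secant slope $h\mapsto (\rho(h)-\rho(0))/h$ is non-decreasing and hence $L(h)=(1-\rho(h))/h$ is \emph{non-increasing} in $h$. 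A monotone function has a limit at $0^+$ (here $\sum_n\lambda_n\b_n\in[0,+\infty]$ by monotone convergence), so the cluster set of $L$ at $0^+$ is a singleton. By Point \ref{point:Markov_stationnaire_limite} of Theorem \ref{them:Markov_stationnaire_bis} and Remark \ref{remp:uniqueness_markov_transform}, your process would admit $P_{\sum_n\lambda_n\b_n}$ as its \emph{strong} local Markov transform and therefore a \emph{unique} weak local Markov transform — the opposite of what the theorem requires. No choice of $(\lambda_n,\b_n)$ can repair this: the obstruction is convexity (indeed complete monotonicity, by Bernstein) of any mixture of exponentials, not a quantitative balancing issue. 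The "main obstacle" you flag is thus not merely delicate but insurmountable within your ansatz.

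What is genuinely needed is a stationary kernel whose correlation is highly non-convex at every scale near $0$, so that $L$ oscillates. The paper achieves this by taking $\hat{\mu}(t)=2\sum_{k\ge 2}a_k\cos(ty_k)$ with $a_k=2^{-k}$ and $y_k$ a lacunary modification of $3^k$: Hardy's analysis of the Weierstrass function gives $L\to+\infty$ along the full series, switching blocks of frequencies off drives $L$ back near $0$, and continuity of $L$ plus the intermediate value theorem then yields every value of $[0,+\infty]$ as a cluster point. A secondary (minor) point: in your forward direction you should also say how, for a fixed pair $s<t$, a partition with the prescribed mesh behaviour is actually produced (the paper uses the uniform grids $(s_n\Z)\cap\mathopen{]}s,t\mathclose{[}$ from Theorem \ref{them:Markov_stationnaire_bis}); your statement "one then chooses subinterval lengths $h_{i,n}$ such that $\sum_i(1-\rho(h_{i,n}))\to G(t)-G(s)$" presupposes a flexibility that must be justified from the cluster-point property.
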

	Hence, the set of Markov transforms of $(X_t)_{t \in \R}$ is not reduced to a singleton but contains a large variety of processes. In particular, Theorem \ref{them:counterexample_intro} shows that a weak local Markov transform of a stationary process is not necessarily stationary.
	Before presenting the organization of the paper, note that all the involved concepts depend only on the law of the involved processes. Hence, we shall work directly with measures on a product space instead of stochastic processes.
	\subsection{Organization of the article}
	In Section \ref{sec:preliminaries}, we introduce some notation and give some definitions. At first, we recall the operation of concatenation and composition of transport plans that will often be used in this paper. Then, we thoroughly define the notions of weak local Markov transform and  strong local Markov transform of a measure  and see how these notions behave relatively to some transformations.

	In Section \ref{seq:compo_gaussian}, we give some general results on Gaussian measures. We begin by proving the concatenation formula, which is an explicit formula of the law obtained by concatening Gaussian transport plans with non-singular marginals (Lemma \ref{lem:compo_gaussien}). This will enable us to give a criterion to find out if a Gaussian measure with non-singular marginals is Markov (Proposition \ref{pro:criteria_gauss_Markov}), to show that the concatenation of Gaussian measures is a continuous operation (Lemma \ref{lem:compo_gaussien_stab}) and to prove that a weak local Markov transform of a Gaussian measure remains a Gaussian measure (Proposition \ref{pro:Gaussianite_limite}). Applying a result of Kellerer \cite[Theorem $1$]{kellerer_markov-komposition_1972}, we show the existence of a weak  local Markov transform in the case of Gaussian measures with non-singular marginals (Theorem \ref{them:fonda_NB_gaussien}). This is the same conclusion as \cite[Theorem $2.26$]{boubel_markov-quantile_2022}, but in the context of Gaussian measures, our kernels do not need be increasing and  multi-dimensional marginals will be considered.

	In Section \ref{sec:Identification_criteria}, we establish a sufficient criterion to prove that a real-valued Gaussian process admits a strong local Markov transform (Theorem \ref{them:Markov_stationnaire}), which is also a preliminary version of Point $3$ of Theorem \ref{them:mimicking_intro}. If $K$ and $\a_K$ are defined as in Theorem \ref{them:mimicking_intro} and we assume that Hypothesis \eqref{eq:correlation} is satisfied, then  $X$ admits a strong local Markov transform and this strong Markov transform is also its mimicking process\footnote{Since weak convergence implies finite-dimensional convergence, this result is weaker than Point $3$ of Theorem \ref{them:mimicking_intro}.}.
  We also give a sufficient criterion to identify weak local Markov transforms of a \emph{stationary} Gaussian process, by looking at the cluster points of the decay rate of its correlation function (Theorem \ref{them:Markov_stationnaire_bis}): If $\a$ is a cluster point of this decay rate when $h \to 0^+$, the (renormalized) stationary Ornstein-Uhlenbeck process with parameter $\a$ is a weak local Markov transform of $X.$ We finally apply our criterion on strong local Markov transforms of Gaussian processes, starting with fractional Brownian motion. 
  
  Section \ref{sec:counterexample} is devoted to the proof of Theorem \ref{them:counterexample_intro}. First, we use the Weierstrass's continuous nowhere differentiable functions \cite{hardy_weierstrasss_1916} to construct a probability measure $\mu$ on $\R$ whose Fourier transform has a decay rate that has infinitely many cluster points at $0^+$ (Lemma \ref{lem:va_fourier_preli} and Proposition \ref{pro:va_fourier}). Then, we apply a theorem of Bochner \cite{bochner_monotone_1933} to construct a stationary Gaussian process $(X_t)_{t \in \R}$ using $\mu$. Finally, we apply the identification criterion of Markov transforms of stationary  processes proved in Section \ref{sec:Identification_criteria} to establish Theorem \ref{them:counterexample_intro} (labelled as Theorem \ref{them:contre_exemple}).

	Finally, in Section \ref{sec:global_markov_transforms}, after properly defining measures made Markov at times $R$ for a finite set $R \subset \R$ (Definition \ref{def:made_markov}), we  study the link between local Markov transforms and global Markov transforms. We shall see that, in the Gaussian case, there is no difference between a strong local  Markov transform and a strong global Markov transform (Proposition \ref{pro:cov_proc_Markovinifie}). For stationary Gaussian processes, we show that our criterion to identify weak \emph{local} Markov transforms still holds for weak \emph{global} Markov transforms (Proposition \ref{pro:finite_dim_conv_stati}). Then, we apply some standard results about convergence of processes to carry out the convergence from the finite-dimensional topology to the topology on continuous processes associated to the uniform norm (Theorem \ref{them:fd_to_wiener}). We finally state and prove Theorem \ref{them:mimicking_intro}, to which we add a SDE characterization of the mimicking process (Theorem \ref{them:mimicking}).

	\section{Preliminaries and (weak) Markov transformation}\label{sec:preliminaries}

		In this section, we fix some generic notation that will be used in the rest of the article.
		
	\begin{nott}\label{not:basique}
		 We write $\ent{a}{b} := [a,b] \cap \N$ for $(a,b) \in \R^2$ and $\{t_1 < \dots < t_m\}$ (resp.  $(t_1 < \dots < t_m)$) for the set $\{t_1, \dots , t_m\}$ (resp. the sequence $(t_1, \dots,t_m)),$ when $t_1 < \dots < t_m.$ For each measurable space $(E,\SS)$, we denote by $\PP(E)$ (resp. $\MM_+(E)$)  the set of probability measures (resp. positive finite measures on $E$). Classically, we endow every product space of measurable spaces with its cylindrical $\sigma$-algebra and every topological space with its Borel sets. For a product space $E = \prod_{t \in T} E_{t}$, if $T' \subset T$, we write $\proj^{T'}$ the projection from $\prod_{ t \in T} E_t $ to $\prod_{ t \in T'} E_t$. We then denote by $f_{\#} \g : A \mapsto \g\left((f^{-1}(A)\right)$ the push-forward measure of $\g$ by $f$ and set  $P^{T'} := \proj^{T'}_\# P \in \PP(\prod_{t \in T'} E_{t})$ for every $P \in \PP(\prod_{t \in T} E_{t})$. In case $T' = \{t_1,\dots,t_m\}$, we rather denote $P^{T'}$ by $P^{t_1, \dots , t_m}$. Furthermore, for $(\mu_t)_{ t \in T} \in \prod_{t \in T} \PP(E_t)$, we write $\Marg((\mu_t)_{t \in T}) := \ens{P \in \PP(\prod_{t \in T} E_{t})}{\forall t \in T, P^t = \mu_t}.$ 
	\end{nott}
	\begin{notdef}[Concatenation and composition] \label{def:compo}
		Let $E_1, \dots, E_d$ be Polish spaces, $(\mu_i)_{i \in \ent{1}{d}} \in \prod_{i=1}^d \PP(E_i)$, and $(P_i)_{i \in \ent{1}{d-1}} \in \prod_{i = 1}^{d-1} \Marg(\mu_i,\mu_{i+1})$. The concatenation of $P_1, \dots, P_{d-1}$ is the probability measure $P_1 \circ \dots \circ P_{d-1} \in \PP(E_1 \times \cdots \times E_d)$ defined by \[ (P_1 \circ \dots \circ P_{d-1})(A_1 \times \dots \times A_d) = \int_{A_1 \times \dots \times A_d} \mu_1(dx_1)k^{1,2}(x_1,dx_2)\dots k^{d-1,d}(x_{d-1},dx_d), \]
		where $k^{i,i+1}$ is the probability kernel defined by the disintegration $P_i(dx_i,dx_{i+1}) = \mu_i(dx_i)k^{i,i+1}(x_i,dx_{i+1})$. Defining $k^{2,1}$ as the kernel given by the disintegration   $P_{1}(dx_1,dx_2) = \mu_2(dx_2)k^{2,1}(x_2,dx_1)$, we leave it to the reader to verify that $(P_1\circ P_{2})(dx_1,dx_2,dx_3) = \mu_2 (dx_2) [k^{2,1}(x_2, \cdot) \otimes k^{2,3}(x_2, \cdot)](dx_1,dx_3)$. This means that, conditionally to the present, the future is independent of the past. For $T \subset \R$ and $(E_t)_{t \in T}$ a family of Polish spaces, we say that a probability $P \in \PP( \prod_{t \in T} E_{t})$  is a Markov measure if for all subset  $\{t_1< \dots <t_m\} \subset T$, $P^{t_1,\dots,t_m} = P^{t_1,t_2} \circ \cdots \circ P^{t_{m-1},t_m}.$ Of course the notion of Markov  measure is related to the more usual notion of Markov process: we leave it to the reader to verify that a process is Markov (relatively to its canonical filtration) if and only if its law is a Markov measure. The composition $P_1  \cdot \ \dots \ \cdot  P_d \in \PP(E_1 \times  E_d)$ of $P_1, \dots, P_d$ is now defined by $ P_1 \cdot \ \dots \ \cdot P_d  := \proj^{1,d}_\# (P_1 \circ \dots \circ P_d).$
		For $s<t \in \R^2$, we write $\SS_{[s,t]}$ the set of partitions of $[s,t]$, \ie , \ the sequences $(t_1 < \dots < t_m) \in \R^m$ with $t_1 = s$ and $t_m = t.$ If $R = (t_1 < \dots < t_m) \in \SS_{[s,t]}$, we set $P_{\{R\}}^{s,t} := P^{t_1,t_2} \cdot \ \dots \ \cdot P^{t_{m-1},t_m}$ and denote by $\sigma_R := \sup_{i \in \ent{1}{d-1}} |t_{i+1} - t_i|$ the mesh of $R$. 
	\end{notdef}

	We now define (weak and strong) local Markov transforms of a measure. We stress out that this notion is different from the notion of \emph{global} Markov transform introduced in Definition \ref{defi:global_markov_transform}.

	\begin{defi}[Local Markov transform]\label{def:Markovinified}
		Let us consider an interval $T \subset \R$, $d \geq 1$  and a measure $P \in \PP( (\R^d )^T )$.
		\begin{enumerate}
			\item We say that $P$ admits a weak local Markov transform if there exists a Markov measure $P'$ such that \[\forall s<t \in T^2, \exists (R_n)_{n \geq 1} \in {\left(\SS_{[s,t]} \right)}^{\N^*}, \accol{\lim_{n \to +\infty} P_{\{R_n\}}^{s,t} = P'^{s,t} \\  \lim_{n \to + \infty} \s_{R_n} = 0}.\] In this case, we say that $P'$ is a weak local Markov transform of $P.$
			\item We say that $P$ admits a strong local Markov transform if there exists a Markov measure $P'$ such that   \[ \forall s<t \in T^2, \forall (R_n)_{n \geq 1} \in {\left(\SS_{[s,t]} \right)}^{\N^*} :  \lim_{n \to +\infty} \s_{R_n} = 0 \implies \lim_{n \to + \infty} P_{\{R_n\}}^{s,t} = P'^{s,t}.\] In this case, we say that $P'$ is a strong Markov transform of $P.$
			\item Given two $\R^d$-valued stochastic processes $(X_t)_{t \in T}$ and $(Y_t)_{t \in T}$, we say that $(Y_t)_{t \in T}$ is a weak (resp. strong) local Markov transform of $(X_t)_{t \in T}$ if the law of $(Y_t)_{t \in T}$ is a weak (resp. strong) Markov transform of the law of $(X_t)_{t \in T}.$
		\end{enumerate}	
	\end{defi}
	\begin{remq}\label{remp:uniqueness_markov_transform}
		\begin{enumerate}
			\item If $P'$ is a strong local Markov transform of a measure $P \in \PP( (\R^d )^T )$ and $Q$ is a weak local Markov transform of $P$, then $P' = Q.$ Indeed, for  every $s<t \in T^2$, there exists $(R_n)_{n \geq 1} \in \left( \SS_{[s,t]} \right)^{\N^*}$ such that $Q^{s,t} = \lim_{n \to + \infty} P^{s,t}_{\{R_n\}}$ and $\lim_{n \to + \infty} \s_{R_n} = 0.$ As $P$ is a strong local Markov transform, we have $ {P'}^{s,t} =\lim_{n \to + \infty} P^{s,t}_{\{R_n\}}$, hence ${P'}^{s,t} = Q^{s,t}$. Since a Markov measure is completely characterized by its two-dimensional laws, this implies $P' = Q.$ In particular, there is at most one strong local Markov transform, and if a measure $P$ admits a strong local Markov transform, we will talk about \emph{the} strong local Markov transform of $P.$
			\item A strong local Markov transform of $P$ is clearly a weak local Markov transform of $P$, but the converse is false, even when the weak local Markov transform is unique. For instance, if $(Y_t)_{t \in \R}$ is a sequence of i.i.d. random variables with law $\NN(0,1)$ and $P$ is the law of the process $ X = (Y_0 1_{t \in \Q} + Y_t 1_{t \in \R \setminus \Q} )_{t \in \R}$, we leave it to the reader to verify that $P' := \otimes_{t \in \R} \ \NN(0,1)$ is the unique weak local Markov transform of $P$, but is not a strong local Markov transform of $P$.
			\item If $P$ is a Markov process, then $P_{\{R\}}^{s,t} = P'^{s,t}$ for every $R \in \SS_{[s,t]}$, so that $P$ is a strong local Markov transform of itself. In particular, $P$ is the only weak local Markov transform of $P.$ 
		\end{enumerate}
		
	\end{remq}
	  In  \cite[Theorem $2.26$]{boubel_markov-quantile_2022}, Boubel and Juillet showed an existence result of a weak local Markov transform when the measure $P$ has increasing kernels in the sens given below. We denote by $\leq_{st}$ the stochastic order on $\PP(\R)$, namely $\mu \leq_{st} \nu$ if, for every non-decreasing bounded function $f$, $\int_{\R} f d\mu \leq \int_{\R} f d\nu.$ 
	\begin{defi}\label{def:increasing_kernel}
		\begin{enumerate}
			\item Consider  $\mu, \nu \in \PP(\R)$ and $P \in \Marg(\mu,\nu).$ We say that $P$ has increasing kernels for the stochastic order if there exists a disintegration  $P(dx,dy) = \mu(dx)k_x(dy)$ and a Borel set $\Gamma$ such that $\mu(\Gamma)=1$ and  $k_x \leq_{st} k_y$ for every $x < y \in \Gamma^2$,
			\item Consider  $T \subset \R$  and $P \in \PP\left(\R^{T}\right)$. We say that $P$ has increasing kernels for the stochastic order if, for every $s<t \in T^2$, $P^{s,t}$ has increasing kernels for the stochastic order.
		\end{enumerate} 
	\end{defi}
	
	For more informations about the stochastic order and other orders on probability spaces, we refer to the monographs \cite{muller_comparison_2002,shaked_stochastic_2007}. For additional information about increasing kernels, we refer to  \cite[Proposition/Definition $3.11.$]{boubel_markov-quantile_2022}.
	
	\begin{them}[Boubel--Juillet]\label{them:fonda_NB}
		Consider an interval $T \subset \R$ and a probability measure  $P \in \PP\left(\R^{T}\right)$. If $P$ has increasing kernels for the stochastic order, then $P$ admits a weak local Markov transform.
	\end{them}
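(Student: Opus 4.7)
My plan is a compactness-and-diagonal-extraction argument, with the increasing-kernel hypothesis playing its role at the step where the Markov property of the limit and the continuity of composition must be verified.

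First, fix a countable dense subset $T_0 \subset T$ and set $\mu_u := P^u$ for every $u \in T$. For any $s<t$ in $T$ and any $R \in \SS_{[s,t]}$, the composition $P^{s,t}_{\{R\}}$ lies in $\Marg(\mu_s, \mu_t)$; since its one-dimensional marginals are fixed, the family $\{P^{s,t}_{\{R\}} : R \in \SS_{[s,t]}\}$ is tight in $\PP(\R^2)$. Enumerate the pairs $(a_k,b_k) \in T_0^2$ with $a_k<b_k$, pick finite sets $R_n \subset T_0$ with $\{a_k,b_k : k \leq n\} \subset R_n$ and $\s_{R_n} \to 0$, and by a diagonal extraction obtain a subsequence $(R_{n_p})$ along which $P^{a,b}_{\{R_{n_p} \cap [a,b]\}}$ converges weakly, for every pair $(a,b) \in T_0^2$ with $a<b$, to some $Q^{a,b} \in \Marg(\mu_a,\mu_b)$.

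Next, I want the family $(Q^{a,b})_{a<b \in T_0}$ to be the two-dimensional projection of a single Markov measure $P'$ on $\R^{T_0}$. For any $a<b<c$ in $T_0$ and any $n$ large enough that $\{a,b,c\} \subset R_n$, associativity of composition yields $P^{a,c}_{\{R_n \cap [a,c]\}} = P^{a,b}_{\{R_n \cap [a,b]\}} \cdot P^{b,c}_{\{R_n \cap [b,c]\}}$. Passing to the limit gives the Chapman--Kolmogorov identity $Q^{a,c} = Q^{a,b} \cdot Q^{b,c}$, provided the binary operation $(\g,\d) \mapsto \g \cdot \d$ is continuous at the limit couplings. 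This is where the increasing-kernel hypothesis and Kellerer's theorem \cite[Theorem~1]{kellerer_markov-komposition_1972} enter: increasing kernels are stable under both composition and weak limits in $\Marg(\mu_a,\mu_b)$, and on this stable class composition is weakly continuous. Chapman--Kolmogorov then allows Kolmogorov's extension theorem to produce a Markov measure $P'$ on $\R^{T_0}$ with ${P'}^{a,b} = Q^{a,b}$.

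Finally, for arbitrary $s<t$ in $T$, approximate $(s,t)$ by $(s_k,t_k) \in T_0^2$, insert $s,t$ into the partitions $R_n \cap [s_k,t_k]$, and refine to produce $\ti{R}_p \in \SS_{[s,t]}$ with $\s_{\ti{R}_p} \to 0$; weak continuity of composition, combined with continuity of $u \mapsto \mu_u$ inherited from the increasing kernels, gives $P^{s,t}_{\{\ti{R}_p\}} \to {P'}^{s,t}$ along a suitable diagonal, which also identifies the two-dimensional law of the extension of $P'$ from $T_0$ to $T$. The main obstacle is precisely this continuity of composition under weak limits: in general, limits of Markov measures need not be Markov and composition need not commute with weak convergence, so the whole argument rests on the Kellerer--Strassen monotone-kernel theory, which is the substantive technical heart of the proof.
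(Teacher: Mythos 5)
The paper does not actually reprove this statement: it is quoted from Boubel--Juillet, and the closest argument written out is the proof of Theorem \ref{them:fonda_NB_gaussien}, which runs the same scheme in the Gaussian setting. That scheme is not a diagonal extraction: for \emph{every} pair $s<t\in T^2$ one forms $\NN_{s,t}^{\s}:=\ens{P^{s,t}_{\{R\}}}{R\in\SS_{[s,t]},\ \s_R\le\s}$ and $\NN_{s,t}:=\bigcap_{\s>0}\overline{\NN_{s,t}^{\s}}$, checks that these sets are non-empty, compact, stable under composition, and that composition is continuous on them (this last point is where the increasing-kernel hypothesis is used), and then invokes Kellerer's consistency theorem (Theorem \ref{Them:Kellerer_consistency}) to produce a Markov measure $P'$ with $P'^{s,t}\in\NN_{s,t}$ for all $s<t$. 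The whole point of that theorem is to avoid any countable reduction in the time variable.

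Your proposal has a genuine gap exactly where you bypass that consistency theorem. The diagonal extraction only controls pairs $(a,b)\in T_0^2$; to conclude you must produce, for an \emph{arbitrary} pair $s<t\in T^2$, partitions of $[s,t]$ whose compositions converge to a two-dimensional law consistent with your $P'$, and your justification is the ``continuity of $u\mapsto\mu_u$ inherited from the increasing kernels''. That is false: increasing kernels impose no regularity whatsoever in the time variable. For instance the law of $(Z\,\1_{t\ge 0})_{t\in\R}$ has increasing kernels (each $P^{s,t}$ with $s<0\le t$ is a product measure, so its kernel is constant in $x$), yet $u\mapsto\mu_u$ jumps at $0$. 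Without time-continuity, the behaviour of $P$ at times outside $T_0$ is not determined by the extracted limits, whereas Definition \ref{def:Markovinified} requires the approximation property at \emph{every} pair of times, with partitions of $[s,t]$ containing $s$ and $t$ themselves. A secondary inaccuracy: the weak continuity of composition on (limits of) increasing-kernel couplings is not ``Kellerer's theorem'' but a separate lemma of Boubel--Juillet; Kellerer's Theorem $1$ is the consistency/extension result --- precisely the tool that handles all pairs at once and that your argument would need to fall back on to close the gap.
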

	In \cite{boubel_markov-quantile_2022}, Theorem \ref{them:fonda_NB} was proved and written with $T = \R$, but an easy modification shows that it stays true for any interval $T \subset \R$. The following proposition indicates how Markov transforms behave relatively to a change in time and a transformation  \enquote{component by component}. Given $S,T \subset \R$, $d \geq 1$, $P \in \PP( ( \R^d )^T )$ and $\phi :S \to T$, we denote by $P^{\phi}$ the measure on $\R^S$ with finite-dimensional laws $(P^{\phi})^{s_1,\dots,s_m} := P^{\phi(s_1), \dots, \phi(s_n)}$, $s_1 < \dots < s_m \in S^m$. If $(X_t)_{t \in T}$ is a stochastic process with law $P$, then $P^{\phi}$ is the law of the time-changed process $\left(X_{\phi(s)}\right)_{s \in S}.$
	\begin{pro}\label{pro:transformation}
		We denote by $T \subset \R$ an interval, we fix $d \geq 1$ and we consider $P, P' \in \PP((\R^d)^T)$.
		\begin{enumerate}
			\item Let $(f_t)_{t \in T}$ be a family of continuous injective functions from $\R^d$ to $\R^d$ and set $  f := \left( \otimes_{t \in T} f_t \right): (x_t)_{t \in T} \in {(\R^d)}^T \mapsto (f_t(x_t))_{t \in T} \in {(\R^d)}^T$.  If $P'$ is a weak (resp. the strong) local Markov transform of $P$, then ${f }_\# P'$ is a weak (resp. the strong) local Markov transform of ${ f}_\# P$.
			\item Let $S,T \subset \R$ be two intervals and $\phi :S \to T$ be a strictly increasing continuous function. If $P'$ is a weak (resp. the strong) local Markov transform of $P$, then $P'^{\phi} \in \PP((\R^d)^S)$ is a weak (resp. the strong) local Markov transform of $P^\phi \in \PP((\R^d)^S)$.
		\end{enumerate} 
	\end{pro}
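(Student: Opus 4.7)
My approach for both parts will be the same: I will first establish an algebraic commutation identity between the relevant transformation and the composition operator $Q \mapsto Q_{\{R\}}^{s,t}$, then invoke the continuity of the push-forward by a continuous map to transfer the convergence defining the local Markov transform. The Markov property of the transformed measure will be checked triple by triple, again using the commutation.

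For part~(1), I plan to establish the identity $(f_\# P)_{\{R\}}^{s,t} = (f_s \otimes f_t)_\# P_{\{R\}}^{s,t}$ for every $R = (t_1 < \dots < t_m) \in \SS_{[s,t]}$, by induction on $m$. The base case $m=2$ is immediate from the definition of $f$, and the inductive step reduces to the following fact: if $g_1, g_2, g_3 : \R^d \to \R^d$ are continuous injective and $Q_1,Q_2$ are two-dimensional measures with a common middle marginal, then $(g_1 \otimes g_2 \otimes g_3)_\# (Q_1 \circ Q_2) = \left((g_1 \otimes g_2)_\# Q_1\right) \circ \left((g_2 \otimes g_3)_\# Q_2\right)$. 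This follows from the characterization of a Markov measure by its two-dimensional marginals: both sides share the same 2-marginals, and the left-hand side is Markov because injectivity of $g_2$ ensures $\sigma(g_2(X_2)) = \sigma(X_2)$, preserving conditional independence of the first and third coordinates given the second. Applying $\proj^{1,3}_\#$ then translates this into the composition identity. Since $f_s \otimes f_t$ is continuous, its push-forward is continuous for weak convergence, and the identity directly transfers weak (resp.\ strong) convergence of $P_{\{R_n\}}^{s,t}$ to $P'^{s,t}$ into the analogous convergence of $(f_\# P)_{\{R_n\}}^{s,t}$ to $(f_\# P')^{s,t}$. The Markov property of $f_\# P'$ follows by applying the same identity to any triple $t_1 < t_2 < t_3$.

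For part~(2), the relevant identity will be $(P^\phi)_{\{R\}}^{s,t} = P_{\{\phi(R)\}}^{\phi(s), \phi(t)}$ for $R = (t_1 < \dots < t_m) \in \SS_{[s,t]}$, where $\phi(R) := (\phi(t_1) < \dots < \phi(t_m))$. This is essentially a rewriting of the definitions, using that $(P^\phi)^{t_i, t_{i+1}} = P^{\phi(t_i), \phi(t_{i+1})}$ and that strict monotonicity of $\phi$ guarantees $\phi(R) \in \SS_{[\phi(s), \phi(t)]}$. For the weak case, given $s < t$ in $S$, I would pick an admissible sequence $R'_n \in \SS_{[\phi(s), \phi(t)]}$ witnessing that $P'$ is a weak local Markov transform of $P$ at $(\phi(s), \phi(t))$ and set $R_n := \phi^{-1}(R'_n) \in \SS_{[s,t]}$; uniform continuity of $\phi^{-1}$ on the compact interval $[\phi(s), \phi(t)]$ yields $\sigma_{R_n} \to 0$. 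The strong case is symmetric, using uniform continuity of $\phi$ on $[s,t]$ to propagate $\sigma_{R_n} \to 0$ into $\sigma_{\phi(R_n)} \to 0$. The Markov property of $P'^\phi$ is immediate since $\phi$ preserves the order on $S$.

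The main technical point will be establishing the commutation of component-wise push-forward with concatenation in part~(1). The injectivity assumption on each $f_t$ is essential there: without it, the push-forward would genuinely fail to preserve the Markov property, since conditioning on $f_t(X_t)$ may carry strictly less information than conditioning on $X_t$. Beyond this, everything reduces to continuity of push-forward for weak convergence and to uniform continuity of $\phi$ and $\phi^{-1}$ on compact intervals.
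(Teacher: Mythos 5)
Your proposal is correct and follows essentially the same route as the paper: both parts rest on the commutation identities $(f_\# Q)_{\{R\}}^{s,t} = (f_s \otimes f_t)_\# Q_{\{R\}}^{s,t}$ and $(P^\phi)_{\{R\}}^{s,t} = P_{\{\phi(R)\}}^{\phi(s),\phi(t)}$, followed by continuity of the push-forward for weak convergence and uniform continuity of $\phi$ and $\phi^{-1}$ on compact intervals. The only difference is that you spell out (by induction and via preservation of conditional independence under the injective $g_2$) the verification that the paper explicitly leaves to the reader, which is a welcome addition rather than a deviation.
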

	\begin{proof}
		\begin{enumerate}
			\item Fix $s<t \in \R^2$. We leave it to the reader to verify that, by injectivity of the functions $(f_t)_{t \in \R}$, the Markov property transmits from $P'$ to $f_\# P'$ and  $\left(f_\# Q \right)_{\{R\}}^{s,t} = (f_s \otimes f_t)_\# Q_{\{R\}}^{s,t}$ for every $ Q \in \PP( (\R^d)^T )$, $R \in \SS_{[s,t]}$. So, if $\lim_{n \to + \infty} P_{\{R_n\}}^{s,t} = P'^{s,t}$, the continuity of $f_s \otimes f_t$ leads to $\left(f_\# P \right)_{\{R_n\}}^{s,t} = (f_s \otimes f_t)_\# P_{\{R_n\}}^{s,t} \dcv{n \to +\infty} (f_s \otimes f_t)_\# P'^{s,t} = \left(f_\# P' \right)^{s,t}$. This shows the result.
			\item First, assume $P'$ is the \emph{strong local Markov transform} of $P$. Fix $s<t \in S^2$ and $(R_n)_{n \geq 1} \in \left(\SS_{[s,t]}\right)^{\N^*}$ such that $\lim_{n \to +\infty}\s_{R_n} = 0.$ Since $\phi$ is strictly increasing and uniformly continuous on $[s,t]$, $(\phi(R_n))_{n \geq 1} \in \left( \SS_{[\phi(s),\phi(t)]} \right)^{\N*}$ and satisfies $ \lim_{n \to +\infty} \s_{\phi(R_n)} = 0.$ Thus  $(P^{\phi})_{\{R_n\}}^{s,t} = P^{\phi(s),\phi(t)}_{\{\phi(R_n)\}} \dcv{n \to + \infty} \left(P'\right)^{\phi(s),\phi(t)} = {\left(P^{\phi}\right)}^{s,t}$, which shows that $P^\phi$ is the strong local Markov transform of $P.$ Now, assume $P'$ is a \emph{weak local Markov transform} of $P$ and fix $s<t \in S^2$. We have to find a sequence $(R_n)_{n \geq 1} \in \left( \SS_{[s,t]}\right)^{\N^*}$ such that $\lim_{n \to + \infty} \s_{R_n} = 0$ and $\lim_{n \to + \infty} \left(P^{\phi}\right)_{\{R_n\}}^{s,t} = \left(P'^{\phi}\right)^{s,t}.$ As $\phi(s) < \phi(t)$  and $P'$ is a weak local Markov transform of $P$, there exists  a sequence $(S_n)_{n \geq 1} \in \left(\SS_{[\phi(s),\phi(t)]}\right)^{\N^*}$ such that $\lim_{n \to +\infty}\s_{S_n} = 0$ and $\lim_{n \to +\infty} P^{\phi(s),\phi(t)}_{\{S_n\}} = P'^{\phi(s),\phi(t)} = {\left(P'^{\phi}\right)}^{s,t}.$ We set $R_n := \phi^{-1}(S_n)$. Since $\phi^{-1} : [\phi(s),\phi(t)] \to [s,t]$ is strictly increasing, uniformly continuous and $\left(P^{\phi}\right)_{\{R_n\}}^{s,t} = P_{\{S_n\}}^{\phi(s),\phi(t)}$,  the sequence $(R_n)_{n \geq 1}$ meets the requirement. 
		\end{enumerate}
	\end{proof}

	\begin{remq}\label{remq:iden_mark_trans_def}
		\begin{itemize}
			\item In terms of random variables, Proposition \ref{pro:transformation} tells us that if the law of $(Y_t)_{t \in T}$ is a weak (resp. the strong) local Markov transform of the law of $(X_t)_{t \in T}$, then the law of $\left(f_{\phi(s)}(Y_{\phi(s)})\right)_{s \in S}$ is a weak (resp. the strong) local Markov transform of the law of $\left(f_{\phi(s)}(X_{\phi(s)})\right)_{s \in S}$.
			\item Consider $S,T$ two intervals, $u :S \to \R^*$ and $\phi : S \to T$ a strictly injective continuous function. Assume $P \in \PP(\R^T)$ is a centered Gaussian process with covariance function $K$ and $P' \in \PP(\R^S)$ is a weak (resp. the strong) Markov transform of $P$ with covariance function $K$. Applying Proposition \ref{pro:transformation}, it is straightforward that the centered Gaussian process with covariance $ (s,t) \in S^2 \mapsto u(s)u(t)K'(\phi(s),\phi(t))$ is a weak (resp. the strong) local Markov transform of  the centered Gaussian process with covariance function $(s,t) \in S^2 \mapsto u(s)u(t)K(\phi(s),\phi(t)).$
		\end{itemize} 
	\end{remq}
	
	For  $\g \in \PP(\R^d)$, we denote by $m_\g \in \R^d$ (resp. $\Sigma_{\g} \in \MM_d(\R)$) the expected value of $\g$ (resp. the covariance of $\g$), \ie , \ the expected value (resp. the covariance matrix) of a $\R^d$-valued random variable with law $\g.$
	\begin{remq}\label{remq:centered_enough}
		In the rest of the article, we work under the hypothesis of non-singular marginals, \ie , \ $\Sigma_{\mu_t} \in \GL_d(\R)$  for every $t \in T.$ In this case, we can apply Proposition \ref{pro:transformation} with $f_t : x_t \mapsto \Sigma_{\mu_t}^{-1/2}(x_t - m_{\mu_t})$ and $f_t^{-1} : u_t \mapsto \Sigma_{\mu_t}^{1/2}x_t + m_{\mu_t}$, where $\Sigma_{\mu_t}^{1/2}$ stands for the symmetric and positive-definite square root of $\Sigma_{\mu_t}$. So, for a given measure $P \in  \PP( (\R^d)^T )$,   $P'$ is a weak (resp. the strong) Markov transform of $P$ if and only if ${\left(\otimes_{t \in T} f_t\right)}_\# P'$ is a weak (resp. the strong) Markov transform of  ${\left(\otimes_{t \in T} f_t\right)}_\# P$. Since $ {\left(\otimes_{t \in T} f_t\right)}_\# P' \in \Marg((\ti{\mu}_t)_{t \in T})$, where $\ti{\mu}_t := {f_t}_\# \mu_t$ satisfies $m_{\ti{\mu}_t} = 0$ and $\Sigma_{\ti{\mu}_t} = I_d$, we can restrict our study to Markov transforms of centered processes with constant covariance function equal to the identity matrix. 
	\end{remq}

	\section{Composition and Markov transformation of Gaussian measures}\label{seq:compo_gaussian}
	For every $d \geq 1,$ we write $\GG_d$ the set of centered Gaussian measures and $\GG_d^*$ the set of centered Gaussian measures on $\R^d$ with invertible covariance matrix. Consider $(d_1,d_2) \in \N^* \times \N^*$, $\pi \in \PP(\R^{d_1} \times \R^{d_2})$ and $(X_1,X_2)$ a $(\R^{d_1} \times \R^{d_2})$-valued random variable with law $\pi$. We denote by $\Sigma_{\pi}^{d_1,d_2} \in \MM_{d_1,d_2}(\R)$ the covariance between $X_1$ and $X_2.$  
	The following lemma is well known and characterizes the weak convergence of (centered) Gaussian measures by the convergence of their covariance matrices. We refer for instance to \cite[Ch. 8, Theorem 3]{bergstrom_weak_1982} for a proof.
	\begin{lem}\label{lem:stab_gaussienne}
		Let consider $(\mu_n)_{n \geq 1} \in \left({\GG_d}\right)^{\N^*}$ and $\mu \in \PP(\R^d).$ The following conditions are equivalent.
		\begin{enumerate}
			\item  The measure $\mu$ is centered Gaussian and $\Sigma_{\mu} = \lim_{n \to + \infty} \Sigma_{\mu_n} .$
			\item  The sequence $(\mu_n)_{n \geq 1}$ weakly converges to $\mu$.
		\end{enumerate}		 
	\end{lem}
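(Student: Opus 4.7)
The plan is to transfer the problem to characteristic functions and conclude via Lévy's continuity theorem. Recall that for every $\gamma \in \GG_d$ with covariance $\Sigma_\gamma$, the characteristic function is $\hat{\gamma}(\xi) = \exp\!\left(-\tfrac{1}{2}\xi^{\top}\Sigma_\gamma \xi\right)$, and that a centered Gaussian is entirely determined by $\Sigma_\gamma$.

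For the direction $(1) \Rightarrow (2)$, if $\Sigma_{\mu_n} \to \Sigma_\mu$, then by continuity of $\Sigma \mapsto \exp(-\tfrac{1}{2}\xi^\top \Sigma \xi)$, the characteristic functions $\hat{\mu}_n$ converge pointwise to $\hat{\mu}$, and Lévy's continuity theorem immediately gives the weak convergence $\mu_n \to \mu$.

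The direction $(2) \Rightarrow (1)$ is the main content. I would first show that the sequence $(\Sigma_{\mu_n})_{n \geq 1}$ is bounded in $\MM_d(\R)$. Weak convergence of $(\mu_n)$ implies that this sequence is tight, so each one-dimensional marginal $\NN(0,(\Sigma_{\mu_n})_{i,i})$ is uniformly tight; writing out what this means for Gaussian distributions forces each diagonal entry $(\Sigma_{\mu_n})_{i,i}$ to be bounded, and by Cauchy--Schwarz, $|(\Sigma_{\mu_n})_{i,j}| \leq \sqrt{(\Sigma_{\mu_n})_{i,i}(\Sigma_{\mu_n})_{j,j}}$ then controls the off-diagonal coefficients. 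Next, given any subsequence $(\Sigma_{\mu_{n_k}})_k$, Bolzano--Weierstrass yields a sub-subsequence converging to some positive semi-definite matrix $\Sigma$. Applying the already established implication $(1) \Rightarrow (2)$ along this sub-subsequence produces weak convergence toward the centered Gaussian with covariance $\Sigma$; by uniqueness of the weak limit, this Gaussian must equal $\mu$. This shows that $\mu$ is centered Gaussian with covariance $\Sigma_\mu := \Sigma$, and that every subsequential limit of $(\Sigma_{\mu_n})$ coincides with $\Sigma_\mu$. A standard subsequence argument then concludes that $\Sigma_{\mu_n} \to \Sigma_\mu$.

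The only delicate point is the boundedness step for $(\Sigma_{\mu_n})$: once it is in place, everything else is the direct combination of Lévy's theorem with the uniqueness of weak limits. Lemma \ref{lem:stab_gaussienne} can then be invoked freely in the sequel to transfer weak convergence problems on centered Gaussian measures into the (considerably easier) matrix-convergence problems used throughout Section \ref{seq:compo_gaussian}.
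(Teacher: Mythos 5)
Your proof is correct and complete: the direction $(1)\Rightarrow(2)$ via pointwise convergence of the characteristic functions $\xi\mapsto\exp(-\tfrac12\xi^{\top}\Sigma_{\mu_n}\xi)$ and L\'evy's continuity theorem is sound, and for $(2)\Rightarrow(1)$ your chain tightness $\Rightarrow$ boundedness of the diagonal entries $\Rightarrow$ boundedness of $(\Sigma_{\mu_n})$ by Cauchy--Schwarz $\Rightarrow$ subsequential limits all equal to a single positive semi-definite matrix (using $(1)\Rightarrow(2)$ and uniqueness of weak limits) is exactly the standard argument. Note that the paper does not prove this lemma at all; it only cites Bergstr\"om, so there is no in-paper proof to compare against, but your argument is precisely the classical one such a reference would supply, and the delicate point you flag (uniform boundedness of the covariances, extracted from tightness of the one-dimensional marginals $\NN(0,(\Sigma_{\mu_n})_{i,i})$) is handled correctly.
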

	We now recall a standard result about conditional laws of Gaussian vectors. We refer to \cite[Chapter $8$, Section $9$]{von_mises_mathematical_1964} for a proof.
	\begin{lem}[Conditioning of Gaussian measures]\label{lem:gauss_cond}
		Let $(m,n)$ be in ${\N^*} \times \N^*$. 
		\begin{enumerate}
			\item Fix  $\pi \in\PP(\R^{m+n})$ a Gaussian measure and set $\mu := {\proj^{1, \cdots,m}}_\# \pi$, $\nu := {\proj^{m+1, \cdots,m+n}}_\# \pi$, $\Sigma_{\pi} := \Sigma_{\pi}^{m,n} \in \MM_{m,n}(\R)$. If $\mu \in \GG_m^*$ and $k : \R^m \to \PP(\R^n)$ is a probability kernel such that $\pi(dx,dy) = \mu(dx)k_x(dy)$, then \[\mu(dx)-\textnormal{a.s.} , \ k_x = \NN\left(\Sigma_{\pi}^t \Sigma_\mu^{-1}x, \Sigma_\nu- \Sigma_{\pi}^t \Sigma_\mu^{-1}\Sigma_{\pi}\right).\]
			\item Consider $\mu \in \GG_m^*$, $(A,\Gamma) \in \MM_{n,m}(\R) \times \MM_n(\R)$ and $k : x \in \R^m \to \NN(Ax,\Gamma) \in \PP(\R^n)$. Then, writing $\pi(dx,dy) = \mu(dx)k_x(dy)$, we have \[ \pi = \NN\left( 0_{\R^{m+n}},\begin{pmatrix}
				\Sigma_\mu & \Sigma_\mu A^t \\ A \Sigma_\mu & \Gamma + A \Sigma_\mu A^t
			\end{pmatrix}\right) \in \GG_{m+n}.\]
		\end{enumerate}
	\end{lem}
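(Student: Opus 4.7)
\bigskip

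\textbf{Proof plan.} Both parts are classical and rest on the same linear-algebraic trick: isolate a Gaussian component orthogonal to $X$ in the $L^2$ sense, which by joint Gaussianity is independent of $X$.

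For part $(2)$, the plan is to construct the joint law concretely. Let $X \sim \mu$ and $Z \sim \NN(0_{\R^n}, \Gamma)$ be independent, and set $Y := AX + Z$. Then $(X, Y)$ is a linear image of the jointly Gaussian pair $(X, Z)$, hence Gaussian. A direct computation gives $\E(X) = \E(Y) = 0$, $\E(XX^t) = \Sigma_\mu$, $\E(XY^t) = \Sigma_\mu A^t$, and $\E(YY^t) = A \Sigma_\mu A^t + \Gamma$, yielding the announced covariance block matrix. Since $Z$ is independent of $X$, the conditional law of $Y$ given $X = x$ is $\NN(Ax, \Gamma)$, so the disintegration $\pi(dx,dy) = \mu(dx) k_x(dy)$ holds and $\pi$ is indeed the desired Gaussian.

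For part $(1)$, the plan is to set $A := \Sigma_\pi^t \Sigma_\mu^{-1}$ (which uses $\mu \in \GG_m^*$, i.e.\ $\Sigma_\mu$ invertible) and define $Z := Y - AX$ for $(X,Y) \sim \pi$. The pair $(X, Z)$ is then a linear image of $(X,Y)$, hence jointly Gaussian and centered. I would next compute the cross-covariance
\[
\E(X Z^t) = \E(X Y^t) - \E(X X^t) A^t = \Sigma_\pi - \Sigma_\mu \cdot \Sigma_\mu^{-1} \Sigma_\pi = 0,
\]
so $X$ and $Z$ are uncorrelated; joint Gaussianity upgrades this to independence. Hence, conditionally on $X = x$, the law of $Y = Ax + Z$ equals $\NN(Ax, \Sigma_Z)$ for $\mu$-almost every $x$. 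A direct expansion of $\E(ZZ^t) = \E((Y - AX)(Y - AX)^t)$ using $A \Sigma_\mu A^t = \Sigma_\pi^t \Sigma_\mu^{-1} \Sigma_\pi$ shows $\Sigma_Z = \Sigma_\nu - \Sigma_\pi^t \Sigma_\mu^{-1} \Sigma_\pi$, which is the announced conditional covariance. Uniqueness of the disintegration closes the argument.

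There is no real obstacle here: the only subtlety is to make sure that $\Sigma_\mu^{-1}$ is well defined (guaranteed by $\mu \in \GG_m^*$) and that the conditional variance matrix $\Sigma_\nu - \Sigma_\pi^t \Sigma_\mu^{-1} \Sigma_\pi$ is symmetric and positive semi-definite, which follows automatically from the fact that it is the covariance of the honest random vector $Z$.
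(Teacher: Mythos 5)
Your proof is correct. The paper does not prove this lemma at all --- it simply cites von Mises for it --- so there is no in-paper argument to compare against; your orthogonal-decomposition argument (set $A := \Sigma_\pi^t \Sigma_\mu^{-1}$, observe $Z := Y - AX$ is uncorrelated with, hence independent of, $X$ by joint Gaussianity, and read off the conditional law) is the standard self-contained proof, and all the matrix computations check out, including $\Sigma_Z = \Sigma_\nu - \Sigma_\pi^t\Sigma_\mu^{-1}\Sigma_\pi$ and the block covariance in part $(2)$. The only point worth making explicit is that in part $(1)$ you tacitly take $\pi$ (hence $\nu$) centered when you declare $(X,Z)$ centered; this is consistent with the paper's convention that ``Gaussian measure'' in the $\GG$-notation means centered, and is needed for the stated formula for $k_x$ to hold, but it deserves a word since the hypothesis only explicitly places $\mu$ in $\GG_m^*$.
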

	For $\mu \in \PP(\R^m), \nu \in \PP(\R^n)$, we set $\GG(\mu,\nu) := \GG_{m+n} \cap \Marg(\mu,\nu)$. Using Lemma \ref{lem:gauss_cond}, we obtain an explicit formula for the concatenation and the composition of Gaussian measures.
	\begin{lem}\label{lem:compo_gaussien}
		Let us consider $(\mu_1, \dots, \mu_p) \in \GG_{d_1}^* \times \cdots \times \GG_{d_p}^*$ and $(P_i)_{i \in \ent{1}{p-1}} \in \prod_{i=1}^{p-1} \GG(\mu_i,\mu_{i+1})$. We denote $\Sigma_{P_i}^{d_i,d_{i+1}}$ by $\Sigma_{i,i+1}$ and $\Sigma_{\mu_i}$ by $\Sigma_{i,i}$.
		\begin{description}
			\leftskip=0.3in
			\item[Concatenation formula:] $P_1 \circ \cdots \circ P_{p-1} = \NN \left( 0, \left( A_{i,j} \right)_{1 \leq i,j \leq p} \right),$ where  $A_{i,j} \in \MM_{d_i,d_j}(\R)$ is defined by
			\[A_{i,j} =
			\begin{cases}
				\Sigma_{i,i+1} \Sigma_{i+1,i+1}^{-1} \Sigma_{i+1,i+2} \cdots \Sigma_{j-1,j-1}^{-1} \Sigma_{j-1,j}\in \MM_{d_i,d_{j}}(\R) & \text{ if } i < j \\
				\Sigma_{i,i} & \text{ if } i=j \\
				A_{j,i}^t &\text{ if } j < i \\
			\end{cases}
			\]
			\item[Composition formula:] \[ P_1 \cdot \ \dots \ \cdot  P_{p-1} = \NN \left( 0 , \begin{pmatrix}
				\Sigma_{\mu_1} & \Sigma_{1,2} \Sigma_{2,2}^{-1} \Sigma_{2,3} \dots \Sigma_{p-1,p-1}^{-1} \Sigma_{p-1,p} \\ (\Sigma_{1,2} \Sigma_{2,2}^{-1} \Sigma_{2,3} \dots \Sigma_{p-1,p-1}^{-1} \Sigma_{p-1,p})^t & \Sigma_{\mu_p}
			\end{pmatrix}\right). \]
		\end{description}
	\end{lem}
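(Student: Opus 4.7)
The plan is to prove the concatenation formula by induction on $p \geq 2$ and then deduce the composition formula by marginalisation.

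For the base case $p=2$, there is nothing to show: the claim is just that $P_1 = \NN(0, \left(\begin{smallmatrix}\Sigma_{1,1} & \Sigma_{1,2} \\ \Sigma_{1,2}^t & \Sigma_{2,2}\end{smallmatrix}\right))$, which is the hypothesis $P_1 \in \GG(\mu_1,\mu_2)$.

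For the inductive step, assuming the formula holds for $p-1$ terms, I set $Q := P_1 \circ \cdots \circ P_{p-2}$ on $\R^D$ with $D := d_1 + \cdots + d_{p-1}$ and, by induction, $Q = \NN(0,\Sigma_Y)$ with $\Sigma_Y = (A_{i,j})_{1\leq i,j \leq p-1}$. Applying Lemma \ref{lem:gauss_cond}(1) to $P_{p-1}$ gives $k^{p-1,p}(x_{p-1},\cdot) = \NN(\Sigma_{p-1,p}^t\Sigma_{p-1,p-1}^{-1}x_{p-1},\Gamma')$ with $\Gamma' = \Sigma_{p,p} - \Sigma_{p-1,p}^t\Sigma_{p-1,p-1}^{-1}\Sigma_{p-1,p}$. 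By construction of the concatenation, conditionally on $Y = (x_1,\dots,x_{p-1})$ the last coordinate is drawn from $k^{p-1,p}(x_{p-1},\cdot)$, which I can rewrite as $\NN(Ay,\Gamma')$ with the zero-padded matrix $A := (0_{d_p,d_1},\dots,0_{d_p,d_{p-2}},\Sigma_{p-1,p}^t\Sigma_{p-1,p-1}^{-1}) \in \MM_{d_p,D}(\R)$. This lets me invoke Lemma \ref{lem:gauss_cond}(2) with $\mu = Q$, which shows that $P_1 \circ \cdots \circ P_{p-1}$ is centered Gaussian with covariance matrix $\left(\begin{smallmatrix}\Sigma_Y & \Sigma_Y A^t \\ A\Sigma_Y & \Gamma' + A\Sigma_Y A^t\end{smallmatrix}\right)$.

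Identifying this matrix with $(A_{i,j})_{1 \leq i,j \leq p}$ is a routine telescoping. Since $A^t$ has all its rows zero except the block with indices in $\{d_1+\cdots+d_{p-2}+1,\dots,D\}$ equal to $\Sigma_{p-1,p-1}^{-1}\Sigma_{p-1,p}$, the $i$-th row-block of $\Sigma_Y A^t$ for $i<p$ equals $A_{i,p-1}\Sigma_{p-1,p-1}^{-1}\Sigma_{p-1,p}$, which by the induction hypothesis telescopes exactly to $A_{i,p}$; and $A\Sigma_Y A^t$ simplifies immediately to $\Sigma_{p-1,p}^t\Sigma_{p-1,p-1}^{-1}\Sigma_{p-1,p}$, so that $\Gamma' + A\Sigma_Y A^t = \Sigma_{p,p} = A_{p,p}$. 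The composition formula then follows without further effort since, by definition, $P_1 \cdot \ \dots \ \cdot P_{p-1} = \proj^{1,p}_\#(P_1 \circ \cdots \circ P_{p-1})$, so one just extracts the blocks $A_{1,1}$, $A_{1,p}$ and $A_{p,p}$ from the concatenation.

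The only slightly subtle step is recasting the kernel $k^{p-1,p}$, which depends only on $x_{p-1}$, as a linear Gaussian kernel in the \emph{full} variable $y$: zero-padding $A$ is what makes Lemma \ref{lem:gauss_cond}(2) applicable and is what simultaneously enforces the Markov structure of the concatenation in the block calculation. Once that is in place, everything reduces to matrix bookkeeping and the invertibility hypothesis $\mu_i \in \GG_{d_i}^*$ that is needed to take the inverses $\Sigma_{i,i}^{-1}$.
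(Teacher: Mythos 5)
Your overall strategy --- induction on $p$ combined with the two parts of Lemma \ref{lem:gauss_cond} --- is the same as the paper's, and the block bookkeeping (zero-padded regression matrix, telescoping of $A_{i,p-1}\Sigma_{p-1,p-1}^{-1}\Sigma_{p-1,p}$ into $A_{i,p}$, cancellation in $\Gamma'+A\Sigma_Y A^t$) is correct. There is, however, one step where you invoke a lemma outside its stated hypotheses: you apply Lemma \ref{lem:gauss_cond}(2) with $\mu = Q = P_1\circ\cdots\circ P_{p-2}$, but that part of the lemma is stated for $\mu \in \GG_D^*$, and the \emph{joint} covariance $\Sigma_Y$ of a concatenation need not be invertible --- only the one-time marginals $\mu_i$ are assumed non-singular. (Take $p=3$ and $P_1$ the law of $(X,X)$ with $X\sim\NN(0,1)$: then $\Sigma_Y=\left(\begin{smallmatrix}1&1\\1&1\end{smallmatrix}\right)$ is singular.) This is fixable in two ways. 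Either observe that the conclusion of Lemma \ref{lem:gauss_cond}(2) never uses $\Sigma_\mu^{-1}$ and in fact holds for an arbitrary centered Gaussian $\mu$ (a one-line computation with characteristic functions: $\E\big[e^{i(u\cdot X+v\cdot Y)}\big]=e^{-\frac12 v^t\Gamma v}\,e^{-\frac12(u+A^tv)^t\Sigma_\mu(u+A^tv)}$), and say so explicitly; or follow the decomposition the paper points to, $P_1\circ P_2=\mu_2(dx_2)\,[k^{2,1}(x_2,\cdot)\otimes k^{2,3}(x_2,\cdot)](dx_1,dx_3)$, which always conditions on a \emph{single} time marginal $\mu_i\in\GG_{d_i}^*$ and therefore stays within the lemma's hypotheses as written. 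With either repair your argument is complete; the deduction of the composition formula by projecting onto the blocks $A_{1,1},A_{1,p},A_{p,p}$ is exactly the paper's.
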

	\begin{proof}
		The proof of the concatenation formula is based on a recursion on $p$, Lemma \ref{lem:gauss_cond} and the decomposition  $P_{1,2} \circ P_{2,3} = \mu_2 (dx_2) [k^{2,1}(x_2, \cdot) \otimes k^{2,3}(x_2, \cdot)](dx_1,dx_3)$ (see Notation/Definition \ref{def:compo}). The composition formula is then immediately implied by the concatenation formula and the projection on $\R^{d_1}\times \R^{d_p}$.
	\end{proof}
	
	In the case where the covariance matrices  are identity matrices, for every $i<j \in \ent{1}{d}^2$, $A_{i,j} = \Sigma_{i,i+1} \dots \Sigma_{j-1,j} \in \MM_{d_i,d_j}$ is the product of the $j-i$ matrices successive matrices  $\Sigma_{k,k+1}$ starting with $k =i$. Lemma \ref{lem:compo_gaussien} allows us to recover\footnote{This criterion seems to be known for a long time \cite{borisov_criterion_1982}, but to the best of our knowledge, not its multi-dimensional version.} a  characterization of the Markov property for Gaussian measures.
	\begin{pro}\label{pro:criteria_gauss_Markov}
		Let fix $d \geq 1$, $T \subset \R$, $(\mu_t)_{t \in T} \in (\GG_d^*)^T$  and denote by $P \in \Marg((\mu_t)_{t \in T})$ a centered Gaussian measure. For $s<t \in T^2$, we set $\Sigma_{t,t} := \Sigma_{\mu_t}$ and $\Sigma_{s,t} := \Sigma_{P^{s,t}}^{d,d}$.  The measure $P$ is Markov if and only if 
		\begin{equation}\label{eq:comp_dim}
			\forall s<t<u \in T^3, \Sigma_{s,u} = \Sigma_{s,t} \Sigma_{t,t}^{-1} \Sigma_{t,u}.
		\end{equation}
	\end{pro}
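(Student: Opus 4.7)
The plan is to use Lemma \ref{lem:compo_gaussien} as the main tool and split into the two implications, recalling that both the measure $P^{t_1,\dots,t_m}$ and the concatenation $P^{t_1,t_2} \circ \cdots \circ P^{t_{m-1},t_m}$ are centered Gaussian (the latter by the concatenation formula), so equality of the two reduces to equality of their covariance matrices.

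For the direct implication, assume $P$ is Markov and fix $s<t<u \in T^3$. Then by definition $P^{s,t,u} = P^{s,t} \circ P^{t,u}$, so projecting on the first and third coordinates gives $P^{s,u} = P^{s,t} \cdot P^{t,u}$. Applying the composition formula of Lemma \ref{lem:compo_gaussien} with $p=3$ immediately yields $\Sigma_{s,u} = \Sigma_{s,t} \Sigma_{t,t}^{-1} \Sigma_{t,u}$.

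For the converse, assume \eqref{eq:comp_dim} holds. Fix $t_1 < \dots < t_m \in T^m$; both $P^{t_1,\dots,t_m}$ and $Q := P^{t_1,t_2} \circ \cdots \circ P^{t_{m-1},t_m}$ are centered Gaussian, so it is enough to show that their covariance matrices coincide, i.e.\ that for every $1 \leq i<j \leq m$,
\begin{equation*}
\Sigma_{t_i,t_j} = \Sigma_{t_i,t_{i+1}} \Sigma_{t_{i+1},t_{i+1}}^{-1} \Sigma_{t_{i+1},t_{i+2}} \cdots \Sigma_{t_{j-1},t_{j-1}}^{-1} \Sigma_{t_{j-1},t_j},
\end{equation*}
which is precisely the form of $A_{i,j}$ given by Lemma \ref{lem:compo_gaussien}. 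This is proved by induction on $j-i$: the base case $j-i=1$ is trivial, and for the induction step we apply \eqref{eq:comp_dim} to the triple $(t_i,t_{i+1},t_j)$ to get $\Sigma_{t_i,t_j} = \Sigma_{t_i,t_{i+1}} \Sigma_{t_{i+1},t_{i+1}}^{-1} \Sigma_{t_{i+1},t_j}$, and then invoke the induction hypothesis on the pair $(i+1,j)$ to expand $\Sigma_{t_{i+1},t_j}$.

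I do not expect a serious obstacle here: the content is essentially a book-keeping verification once Lemma \ref{lem:compo_gaussien} is in hand. The only point that requires a bit of care is observing that the invertibility of $\Sigma_{t,t} = \Sigma_{\mu_t}$ (which comes from the hypothesis $\mu_t \in \GG_d^*$) is what allows us to apply the composition formula and to iterate the relation in the induction step; without this non-singularity assumption, the expression $\Sigma_{s,t}\Sigma_{t,t}^{-1}\Sigma_{t,u}$ would not be meaningful in a matrix sense.
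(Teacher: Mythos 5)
Your proposal is correct and follows essentially the same route as the paper: the direct implication via the composition formula applied to $P^{s,u} = P^{s,t}\cdot P^{t,u}$, and the converse by applying \eqref{eq:comp_dim} recursively to identify the covariance blocks $A_{i,j}$ of the concatenation with $\Sigma_{t_i,t_j}$, then concluding by equality of centered Gaussian measures with the same covariance. Your explicit induction on $j-i$ and the remark on the role of the invertibility of $\Sigma_{\mu_t}$ merely spell out what the paper leaves implicit.
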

	
	\begin{proof}
		If $P$ is Markov, then for every $s<t<u \in T^3$, according to the composition formula, we have $\Sigma_{s,u} = \Sigma_{P^{s,u}}^{d,d} = \Sigma_{P^{s,t} \cdot P^{t,u}}^{d,d} = \Sigma_{P^{s,t}}^{d,d} \Sigma_{\mu_t}^{-1} \Sigma_{P^{t,u}}^{d,d} = \Sigma_{s,t}\Sigma_{t,t}^{-1} \Sigma_{t,u}.$ For the converse implication, we assume that Hypothesis \eqref{eq:comp_dim} is true  and we want to show that $P^{t_0,\dots,t_m} = P^{t_0,t_1} \circ \dots \circ P^{t_{m-1},t_m}$ for all $t_0 < \dots < t_m \in T^{m+1}$. Since, for every  $i<j \in \ent{1}{d-1}$, applying \eqref{eq:comp_dim} recursively, we obtain $\Sigma_{t_i,t_{i+1}} \Sigma_{t_{i+1},t_{i+1}}^{-1} \Sigma_{t_{i+1},t_{i+2}} \cdots \Sigma_{t_{j-1},t_{j-1}}^{-1} \Sigma_{t_{j-1},t_j} = \Sigma_{t_i,t_j}.$ According to the concatenation formula, $P^{t_0,\dots,t_m}$ and $P^{t_0,t_1} \circ \dots \circ P^{t_{m-1},t_m}$ are two centered Gaussian measures with the same covariance matrix, hence are equal.
	\end{proof}
	
	Using  Lemma \ref{lem:stab_gaussienne} and the concatenation formula of Lemma \ref{lem:compo_gaussien}, we obtain the continuity of concatenation and composition on product spaces of Gaussian transport plans.
	\begin{lem}\label{lem:compo_gaussien_stab}
		Let us consider $(\mu_1, \dots, \mu_p) \in \GG_{d_1}^* \times \cdots \times \GG_{d_p}^*$, $P_i \in \GG(\mu_i,\mu_{i+1})$ and, for $i \in \ent{1}{p-1}$, $(P_i^n)_{n \geq 1} \in \GG(\mu_i,\mu_{i+1})^{\N^*}$.   If    $ \lim_{n \to + \infty} P^n_i = P_i$ for every $i \in \ent{1}{d-1}$, then  $P_1 \circ \cdots \circ P_p = \lim_{n \to + \infty} P^n_1 \circ \cdots \circ P^n_p $ and $P_1 \cdot \ \dots \ \cdot P_p = \lim_{n \to + \infty} P^n_1 \cdot \ \dots \ \cdot P^n_p.$
	\end{lem}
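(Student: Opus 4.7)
The plan is to reduce the convergence of concatenations and compositions to the convergence of covariance matrices, exploiting the closed-form concatenation formula provided by Lemma \ref{lem:compo_gaussien} together with the characterization of weak convergence for centered Gaussians given by Lemma \ref{lem:stab_gaussienne}.

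First, I note that since each $P_i^n$ and $P_i$ lies in $\GG(\mu_i,\mu_{i+1})$, they are centered Gaussian measures with fixed marginals $\mu_i,\mu_{i+1}\in\GG_{d_i}^*\times\GG_{d_{i+1}}^*$. By Lemma \ref{lem:stab_gaussienne} applied to each coordinate, the hypothesis $\lim_{n\to+\infty}P_i^n=P_i$ is equivalent to the convergence of the covariance matrices, which (since the diagonal blocks $\Sigma_{i,i}$ and $\Sigma_{i+1,i+1}$ are fixed) reduces to $\lim_{n\to+\infty}\Sigma_{i,i+1}^n=\Sigma_{i,i+1}$, where $\Sigma_{i,i+1}^n:=\Sigma_{P_i^n}^{d_i,d_{i+1}}$.

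Next, I apply the concatenation formula of Lemma \ref{lem:compo_gaussien}: the measure $P_1^n\circ\cdots\circ P_{p-1}^n$ is the centered Gaussian on $\R^{d_1}\times\cdots\times\R^{d_p}$ whose block $(i,j)$ covariance, for $i<j$, is the product $\Sigma_{i,i+1}^n\Sigma_{i+1,i+1}^{-1}\Sigma_{i+1,i+2}^n\cdots\Sigma_{j-1,j-1}^{-1}\Sigma_{j-1,j}^n$, the diagonal block is the fixed $\Sigma_{i,i}$, and the block $(j,i)$ is the transpose. Each such block is a polynomial expression in the $\Sigma_{i,i+1}^n$ with fixed coefficient matrices $\Sigma_{k,k}^{-1}$. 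Matrix multiplication and transposition being continuous, each block converges to the analogous block for $P_1\circ\cdots\circ P_{p-1}$, so the full covariance matrix of $P_1^n\circ\cdots\circ P_{p-1}^n$ converges to that of $P_1\circ\cdots\circ P_{p-1}$. Invoking Lemma \ref{lem:stab_gaussienne} in the reverse direction then yields weak convergence of the concatenations.

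Finally, for the composition statement, I recall that $P_1\cdot\,\dots\,\cdot P_{p-1}=\proj^{1,p}_{\#}(P_1\circ\cdots\circ P_{p-1})$ by Notation/Definition \ref{def:compo}. Since the pushforward by a continuous map (here the canonical projection $\R^{d_1}\times\cdots\times\R^{d_p}\to\R^{d_1}\times\R^{d_p}$) is continuous for weak convergence, the composition result is an immediate consequence of the concatenation result. I do not anticipate a real obstacle here: everything is reduced to continuity of polynomial operations on the cross-covariance blocks, and the only subtle point worth verifying explicitly is that $\Sigma_{i,i}$ is assumed invertible (hypothesis $\mu_i\in\GG_{d_i}^*$), which is precisely what makes the inverses in the formula well-defined and continuous in a neighborhood of the limit.
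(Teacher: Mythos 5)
Your proposal is correct and follows exactly the paper's argument: reduce weak convergence of centered Gaussians to convergence of covariance matrices via Lemma \ref{lem:stab_gaussienne}, observe that the blocks given by the concatenation formula of Lemma \ref{lem:compo_gaussien} depend continuously (as matrix products with the fixed invertible $\Sigma_{i,i}^{-1}$) on the cross-covariance blocks, and obtain the composition case by continuity of the projection pushforward. No differences worth noting.
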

	\begin{proof}
		According to Lemma \ref{lem:stab_gaussienne}, for every $i<j \in \ent{1}{d-1}$, the function $A_{i,j} : (P_1, \dots, P_d) \in \GG(\mu_1,\mu_2) \times \dots \times \GG(\mu_{p-1},\mu_p) \mapsto \Sigma_{P_i}^{d_i,d_{i+1}} \Sigma_{\mu_i}^{-1} \dots  \Sigma_{\mu_{j-1}}^{-1} \Sigma_{P_i}^{d_{j-1},d_{j}}$ is continuous. According to Lemma \ref{lem:stab_gaussienne} and the concatenation formula in Lemma \ref{lem:compo_gaussien}, the function $\mathcal{C} : (P_1, \dots, P_{d-1}) \in \GG(\mu_1,\mu_2) \times \dots \times \GG(\mu_{d-1},\mu_d) \mapsto P_1 \circ \dots \circ P_{d-1}$ is continuous. The continuity of the composition is then a consequence of the continuity of projections.
	\end{proof}
	Applying Lemma \ref{lem:compo_gaussien}, we obtain that a weak local Markov transform of a Gaussian measure is a Gaussian measure.
	\begin{pro}\label{pro:Gaussianite_limite}
		Fix an interval $T \subset \R$, $d \geq 1$, $(\mu_t)_{t \in T} \in \left( \GG_d^* \right)^T$ and denote by $P \in \PP((\R^d)^T)$ a Gaussian measure. If $P' \in \PP((\R^d)^T)$ is a weak Markov transform of $P$, then $P'$ is a Gaussian measure.
	\end{pro}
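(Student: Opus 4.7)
The plan is to show that every finite-dimensional marginal $P'^{t_0,t_1,\dots,t_m}$ is Gaussian, since this is precisely the definition of $P'$ being a Gaussian measure. Because $P'$ is assumed to be Markov, these finite-dimensional marginals are determined by the two-dimensional ones through concatenation: $P'^{t_0,\dots,t_m} = P'^{t_0,t_1} \circ \cdots \circ P'^{t_{m-1},t_m}$. So the whole argument reduces to showing that each two-dimensional marginal $P'^{s,t}$ (for $s<t \in T^2$) is a Gaussian measure, and then invoking the concatenation formula of Lemma \ref{lem:compo_gaussien} to conclude.

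First, I would reduce to the centered case via Remark \ref{remq:centered_enough}, so that we may assume $P$ is centered Gaussian with marginals $(\mu_t)_{t \in T} \in (\GG_d^*)^T$. Fixing $s<t \in T^2$, Definition \ref{def:Markovinified} provides a sequence $(R_n)_{n \geq 1} \in (\SS_{[s,t]})^{\N^*}$ with mesh going to $0$ such that $P^{s,t}_{\{R_n\}} \to P'^{s,t}$ weakly. For each $n$, writing $R_n = (s=t_0^n<t_1^n<\dots<t_{p_n}^n=t)$, the plan $P^{s,t}_{\{R_n\}} = P^{t_0^n,t_1^n} \cdot \ \dots \ \cdot P^{t_{p_n-1}^n,t_{p_n}^n}$ is a composition of centered Gaussian transport plans with invertible marginals, so by the composition formula of Lemma \ref{lem:compo_gaussien} it lies in $\GG_{2d}$.

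Then, since $(P^{s,t}_{\{R_n\}})_{n \geq 1}$ is a sequence in $\GG_{2d}$ converging weakly to $P'^{s,t}$, Lemma \ref{lem:stab_gaussienne} immediately yields that $P'^{s,t}$ is itself centered Gaussian. Furthermore, since each $P^{s,t}_{\{R_n\}}$ lies in $\Marg(\mu_s,\mu_t)$, passing to the limit we obtain $P'^s = \mu_s$ and $P'^t = \mu_t$, so every one-dimensional marginal of $P'$ belongs to $\GG_d^*$. This is the input needed in order to apply the concatenation formula of Lemma \ref{lem:compo_gaussien}.

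Finally, fixing $t_0<t_1<\dots<t_m \in T^{m+1}$, the Markov property of $P'$ gives $P'^{t_0,\dots,t_m} = P'^{t_0,t_1} \circ \cdots \circ P'^{t_{m-1},t_m}$. Each factor is a centered Gaussian transport plan between marginals in $\GG_d^*$, so the concatenation formula of Lemma \ref{lem:compo_gaussien} expresses $P'^{t_0,\dots,t_m}$ as a centered Gaussian measure with an explicit covariance matrix. Since this holds for every finite subset of $T$, $P'$ is a Gaussian measure. The argument is essentially routine once one observes that the only non-trivial step is the stability of Gaussianity under weak limits, which is exactly Lemma \ref{lem:stab_gaussienne}; there is no real obstacle provided one keeps track of the centering reduction and the fact that the Markov hypothesis on $P'$ combined with the invertibility of the one-dimensional marginal covariances allows the concatenation formula to extend two-dimensional Gaussianity to all dimensions.
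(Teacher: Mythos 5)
Your proof is correct and follows essentially the same route as the paper: reduce to the centered case via Remark \ref{remq:centered_enough}, use the composition formula of Lemma \ref{lem:compo_gaussien} to see that each $P^{s,t}_{\{R_n\}}$ is centered Gaussian, pass to the limit with Lemma \ref{lem:stab_gaussienne} to get $P'^{s,t} \in \GG(\mu_s,\mu_t)$, and then use the Markov property together with the concatenation formula to upgrade to all finite-dimensional marginals. Your explicit remark that the marginals $\mu_s,\mu_t \in \GG_d^*$ are preserved in the limit (needed to apply the concatenation formula) is a point the paper handles implicitly via the closedness of $\Marg(\mu_s,\mu_t)$, so nothing is missing.
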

	\begin{proof}
		According to Remark \ref{remq:centered_enough}, we can assume that our $(\mu_l)_{l \in T}$ are centered. Fix $s<t \in T^2$ and a sequence $(R_n)_{n \geq 1} \in {(\SS_{[s,t]})}^{\N^*}$ such that $\lim_{n \to + \infty} P_{\{R_n\}}^{s,t} = P'^{s,t}$. According to Lemma \ref{lem:compo_gaussien}, $P_{\{R_n\}}^{s,t} \in \GG(\mu_s,\mu_t).$ It is well known that $\Marg(\mu_s, \mu_t)$ is closed and according to Lemma \ref{lem:stab_gaussienne}, a limit of a sequence of centered Gaussian measures is a centered Gaussian measure. Thus $\GG(\mu_s,\mu_t)$ is closed and we obtain $P'^{s,t} \in \GG(\mu_s,\mu_t).$ Hence, for each $t_1 < \dots < t_p \in T^p$, since $P'$ is a Markov measure, $P'^{t_1,\dots,t_p} = P'^{t_1,t_2} \circ \cdots \circ P'^{t_{p-1},t_p}.$ According to the composition formula in Lemma \ref{lem:compo_gaussien}, $P'^{t_1,\dots,t_p}$ is Gaussian, which proves the desired result.
	\end{proof}
	
	In the context of Gaussian measures,  the hypothesis of increasing kernel in Theorem \ref{them:fonda_NB} can be removed. For this purpose, we adapt the proof of Boubel--Juillet \cite[Theorem $2.26.$]{boubel_markov-quantile_2022}. We first recall a theorem of Kellerer,  main tool of the proof (see \cite[Theorem $1$]{kellerer_markov-komposition_1972}). This theorem is an existence result of a Markov measure satisfying certain constraints. It generalizes  the standard Kolmogorov extension theorem of a Markov process fitting a consistent family of two-dimensional laws (just take $\NN^{s,t} = \{\mu_{s,t}\}$ below).
	\begin{them}\label{Them:Kellerer_consistency}
		Let $T$ be an interval, and $ (\mu_t)_{t \in {T}} $ a family of probability measures on some Polish space \( E \). For every \( s < t \in T^2\), we consider a subset $ \mathcal{N}_{s,t} $ of $ \mathcal{P}(E^2)$.  Assume that, for every $s<t \in T^2$:
		\begin{enumerate}
			\item[(1)] $  \mathcal{N}_{s,t} $ is non-empty;
			\item[(2)] \( \mathcal{N}_{s,t} \subset \Marg(\mu_s, \mu_t) \);
			\item[(3)] \( \mathcal{N}_{s,t} \) is closed for the weak topology;
			\item[(4)] For every \( r < s < t \in T^3\) and  \( (P, P') \in \mathcal{N}_{r,s} \times \mathcal{N}_{s,t} \), \( P \cdot P' \in \mathcal{N}_{r,t} \);
			\item[(5)] For every \( d \geq 1 \) and \( t_1 < \ldots < t_d \in T^d \), if for every $i \in \ent{1}{d-1}$ the sequences \( (Q^{n}_{t_i, t_{i+1}})_{n \geq 1} \in \left(\mathcal{N}_{t_i, t_{i+1}}\right)^{\N^*} \) converge weakly to \( Q_{t_i, t_{i+1}} \), then the sequence \( \left(Q^{n}_{t_1, t_2} \circ \ldots \circ Q^{n}_{t_{d-1}, t_d}\right)_{n \geq 1} \)  tends weakly to \( Q_{t_1, t_2} \circ \ldots \circ Q_{t_{d-1}, t_d} \).
		\end{enumerate}
		Then, there exists a Markov measure \( P \in \Marg((\mu_t)_{t \in T}) \) satisfying \( \proj^{s,t}_{\#}P \in \mathcal{N}_{s,t} \) for every \( s < t \in T^2\).
	\end{them}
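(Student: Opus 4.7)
My plan is to combine Kolmogorov's extension theorem with a Tychonov-style compactness argument. The strategy is to first produce a family $(P_{s,t})_{s<t \in T}$ with $P_{s,t} \in \NN_{s,t}$ satisfying the cocycle condition $P_{s,t} = P_{s,r} \cdot P_{r,t}$ for every $s<r<t \in T^3$. Given such a family, for every finite $F = \{t_1 < \ldots < t_n\} \subset T$, I set $P^F := P_{t_1,t_2} \circ \cdots \circ P_{t_{n-1},t_n}$; the composition formula together with the cocycle condition ensures that $(P^F)_F$ is a consistent projective system of measures on the Polish space $E$. Kolmogorov's extension theorem then yields a probability $P \in \PP(E^T)$ whose finite-dimensional laws are the $P^F$'s. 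By construction, $P$ is a Markov measure and $\proj^{s,t}_{\#}P = P_{s,t} \in \NN_{s,t}$, which is the desired conclusion.

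To construct the family $(P_{s,t})$, I argue as follows. By Prokhorov's theorem, $\Marg(\mu_s,\mu_t)$ is weakly compact, and condition $(3)$ ensures that $\NN_{s,t}$ is a closed, hence compact, subset. Form the compact space $\Omega := \prod_{s<t \in T} \NN_{s,t}$ (Tychonov). For every finite subset $F \subset T$, let $\mathcal{C}_F \subset \Omega$ denote the subset of families $(P_{s,t})_{s<t}$ such that $P_{s,t} = P_{s,r} \cdot P_{r,t}$ holds for every triple $s<r<t$ in $F$; this subset is closed because condition $(5)$ applied with $d = 3$ yields joint continuity of the binary composition $\cdot$, and closure of $\NN$ (condition $(3)$) keeps limits in $\NN$.

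I then verify that $\mathcal{C}_F$ is non-empty. Enumerate $F = \{t_1 < \ldots < t_n\}$, pick $Q_i \in \NN_{t_i, t_{i+1}}$ for $i \in \ent{1}{n-1}$ by condition $(1)$, and set $P_{t_i, t_j} := Q_i \cdot \ldots \cdot Q_{j-1}$ for $i < j$. Condition $(4)$ and a straightforward induction show that $P_{t_i, t_j} \in \NN_{t_i, t_j}$, and the cocycle condition on all triples in $F$ follows from the associativity of $\cdot$; complete the family arbitrarily on pairs outside $F$ (using $(1)$). Next, the family $\{\mathcal{C}_F\}_{F \text{ finite}}$ has the finite intersection property: $\mathcal{C}_{F_1 \cup \ldots \cup F_k} \subset \mathcal{C}_{F_i}$ for each $i$. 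Compactness of $\Omega$ then gives $\bigcap_F \mathcal{C}_F \neq \emptyset$, producing the desired family.

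The main obstacle I expect is the verification of projective consistency $\proj^F_\# P^{F'} = P^F$ for $F \subset F'$ in the Kolmogorov step. The clean way is to note that both sides are Markov measures on $E^F$, and the cocycle condition ensures that their two-dimensional marginals all coincide (whether we compute a two-dimensional marginal from the concatenation inside $F$ or first inside $F'$ and then project), so since a Markov measure is characterized by its two-dimensional marginals, they agree. A secondary subtlety is confirming that the continuity hypothesis $(5)$, which is stated for arbitrary $d$, is truly only needed here for $d = 3$; this is the case because the cocycle is a binary condition, and the full strength of $(5)$ will only be exploited later, once $P$ is built, to pass properties to limits.
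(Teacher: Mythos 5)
The paper does not actually prove this statement: it is quoted as Kellerer's theorem and used as a black box, with only a citation to \cite{kellerer_markov-komposition_1972}. Your argument is therefore the only proof on the table, and as far as I can check it is correct; it also follows what is essentially the classical route. The compactness of each $\NN_{s,t}$ (tightness and closedness of $\Marg(\mu_s,\mu_t)$ for Polish $E$, plus hypotheses $(2)$--$(3)$), the Tychonov/finite-intersection-property extraction of a global cocycle $(P_{s,t})_{s<t}$ --- with non-emptiness of each $\mathcal{C}_F$ supplied by $(1)$, $(4)$ and the associativity of $\cdot$, and closedness supplied by $(5)$ with $d=3$ (sequential continuity suffices here because the weak topology on $\PP(E^2)$ is metrizable, and each $\mathcal{C}_F$ only constrains finitely many coordinates of the possibly uncountable product) --- and the final Kolmogorov extension are all sound. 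Two points deserve to be written out rather than asserted. First, your consistency check $\proj^F_{\#}P^{F'}=P^F$ is phrased as ``both sides are Markov with the same two-dimensional marginals,'' but knowing that $\proj^F_{\#}P^{F'}$ is Markov is exactly the Chapman--Kolmogorov computation showing that projecting a concatenation onto a sub-grid yields the concatenation of the compositions over the gaps; as phrased the argument is mildly circular, though the underlying kernel computation is standard and true (the paper itself treats the analogous verification in Definition/Proposition \ref{def:made_markov} as routine). Second, your observation that only the $d=3$ case of $(5)$ is needed is correct for the existence statement; the full hypothesis is simply inherited from Kellerer's formulation and costs nothing.
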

	If $R =(r_1, \dots, r_p)$ and $S = (s_1, \dots, s_q)$ are such that $r_p = s_1$, we denote $(r_1, \dots,r_p,s_2,\dots,s_p)$ by $R+S \in \SS_{[r_1,s_q]}.$ 
	\begin{them}\label{them:fonda_NB_gaussien}
		Consider  an interval $T \subset \R$  and $(\mu_t)_{t \in T} \in \left(\GG_d^*\right)^T$ . Then, every Gaussian measure $P \in \Marg((\mu_t)_{t \in T})$ admits a weak local Markov transform.
	\end{them}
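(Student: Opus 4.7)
The plan is to apply Kellerer's Theorem \ref{Them:Kellerer_consistency} with a family $(\NN_{s,t})_{s<t \in T^2}$ designed so that a Markov measure $P'$ whose two-dimensional projections lie in $\NN_{s,t}$ is automatically a weak local Markov transform of $P$. The natural candidate is the set of cluster points of the family of composed plans along partitions of $[s,t]$ as the mesh shrinks:
\[
\NN_{s,t} := \bigcap_{\varepsilon > 0} \overline{\left\{ P^{s,t}_{\{R\}} \; ; \; R \in \SS_{[s,t]}, \; \s_R \leq \varepsilon \right\}},
\]
where the closure is taken for the weak topology on $\PP(\R^d \times \R^d)$. By construction, any $Q \in \NN_{s,t}$ can be written as $Q = \lim_n P^{s,t}_{\{R_n\}}$ for some $(R_n) \in (\SS_{[s,t]})^{\N^*}$ with $\s_{R_n} \to 0$, via a diagonal extraction. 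Thus, once Kellerer delivers a Markov $P' \in \Marg((\mu_t)_{t \in T})$ with $P'^{s,t} \in \NN_{s,t}$ for every $s<t$, the conclusion of the theorem will follow immediately.

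It remains to check conditions $(1)$--$(5)$ of Theorem \ref{Them:Kellerer_consistency}. Condition $(2)$ holds because $P^{s,t}_{\{R\}} \in \Marg(\mu_s,\mu_t)$ for every $R$ and $\Marg(\mu_s,\mu_t)$ is closed. Condition $(3)$ is immediate from the definition of $\NN_{s,t}$ as an intersection of closed sets. For condition $(1)$, fix any sequence $(R_n) \in (\SS_{[s,t]})^{\N^*}$ with $\s_{R_n} \to 0$; since $\Marg(\mu_s,\mu_t)$ is tight, $(P^{s,t}_{\{R_n\}})$ has a weakly convergent subsequence, whose limit belongs to $\NN_{s,t}$. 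Note moreover that each $P^{s,t}_{\{R\}}$ is centered Gaussian (by the concatenation formula in Lemma \ref{lem:compo_gaussien}) and $\GG(\mu_s,\mu_t)$ is closed under weak convergence (Lemma \ref{lem:stab_gaussienne}), so $\NN_{s,t} \subset \GG(\mu_s,\mu_t)$.

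The main verification is condition $(4)$. Given $Q \in \NN_{r,s}$ and $Q' \in \NN_{s,t}$, one extracts by the diagonal argument sequences $(R_n) \in (\SS_{[r,s]})^{\N^*}$ and $(S_n) \in (\SS_{[s,t]})^{\N^*}$ with $\s_{R_n}, \s_{S_n} \to 0$ such that $P^{r,s}_{\{R_n\}} \to Q$ and $P^{s,t}_{\{S_n\}} \to Q'$. Since all these plans are Gaussian and share the marginal $\mu_s$ at the glueing time, Lemma \ref{lem:compo_gaussien_stab} applies and gives $P^{r,s}_{\{R_n\}} \cdot P^{s,t}_{\{S_n\}} \to Q \cdot Q'$. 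But the left-hand side equals $P^{r,t}_{\{R_n + S_n\}}$ by associativity of composition, and $\s_{R_n + S_n} \leq \max(\s_{R_n},\s_{S_n}) \to 0$, so $Q \cdot Q' \in \NN_{r,t}$. Condition $(5)$ is a direct restatement of the continuity of concatenation on products of Gaussian plans proved in Lemma \ref{lem:compo_gaussien_stab}, valid here because $\NN_{t_i,t_{i+1}} \subset \GG(\mu_{t_i},\mu_{t_{i+1}})$.

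The hard part is really condition $(4)$: one needs the right definition of $\NN_{s,t}$ (controlling mesh uniformly over $\varepsilon$) so that the concatenation of two approximating partitions is admissible in $\NN_{r,t}$, together with a genuine continuity of Gaussian composition, which was prepared in advance by Lemma \ref{lem:compo_gaussien_stab}. Once Kellerer's theorem supplies the Markov measure $P' \in \Marg((\mu_t)_{t \in T})$ with $P'^{s,t} \in \NN_{s,t}$, the characterization of elements of $\NN_{s,t}$ as limits along meshes going to zero gives the required weak local Markov transform property, ending the proof.
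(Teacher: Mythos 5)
Your proposal is correct and follows essentially the same route as the paper: the same sets $\NN_{s,t}=\bigcap_{\varepsilon>0}\overline{\NN^\varepsilon_{s,t}}$, the same verification of Kellerer's conditions using the compactness of $\GG(\mu_s,\mu_t)$ and the continuity of Gaussian composition (Lemma \ref{lem:compo_gaussien_stab}), and the same concatenation-of-partitions argument for condition $(4)$. The only (immaterial) differences are that you establish non-emptiness by subsequential compactness rather than by a decreasing intersection of non-empty compacts, and that you extract mesh-to-zero sequences by a diagonal selection where the paper works at a fixed mesh bound $\s$.
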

	\begin{proof}
		For every $s<t \in T^2$ and $\s >0$, put $ \NN_{s,t}^{\s} := \ens{P^{s,t}_{\{R\}}}{R \in \SS_{[s,t]} \text{ and } \s_R \leq \s}$ . For each $s<t$, we set $\NN_{s,t} := \cap_{\s >0} \hspace{0.05cm} \overline{\NN_{s,t}^\s}.$  In order to apply Theorem \ref{Them:Kellerer_consistency}, we establish that the conditions $(1)$ to $(5)$ are fulfilled. First recall that $\GG(\mu_s,\mu_t) \subset \Marg(\mu_s,\mu_t)$ is closed and $\Marg(\mu_s,\mu_t)$ is compact, so that $\GG(\mu_s,\mu_t)$ is compact. For every $s<t \in T^2$ and $\s >0$, according to the composition formula of Lemma \ref{lem:compo_gaussien}, we get $ \emptyset \subsetneq \NN_{s,t}^\s \subset \GG(\mu_s,\mu_t)$, which implies $\overline{\NN_{s,t}^\s} \subset \GG(\mu_s,\mu_t).$ Hence, $\NN_{s,t}$ is a decreasing intersection of non-empty compact subsets of $\GG(\mu_s,\mu_t)$, thus a non-empty compact subset of $\GG(\mu_s,\mu_t).$ This establishes $(1), (2)$ and $(3)$. To prove $(4)$, consider $Q_1 \in \NN_{s,t}$, $Q_2 \in \NN_{t,u}$ and $\s > 0.$ There exists two sequences of partitions $(R_n)_{n \geq 1} \in {(\SS_{[s,t]})}^{\N^*}$ and $(S_n)_{n \geq 1} \in {(\SS_{[t,u]})}^{\N^*}$ such that $Q_1 = \lim_{n \to +\infty} P_{\{R_n\}}^{s,t} $, $Q_2 = \lim_{n \to +\infty} P_{\{S_n\}}^{t,u}$ and $\max(\s_{R_n},\s_{S_n}) \leq \s.$ According to Lemma \ref{lem:compo_gaussien_stab}, we get $P_{\{R_n + S_n\}}^{s,u} = P_{\{R_n\}}^{s,t} \cdot P_{\{S_n\}}^{t,u}  \dcv{n \to + \infty} Q_1 \cdot Q_2.$ Since $\s_{R_n+ S_n} = \max(\s_{R_n},\s_{S_n}) \leq \s$, we have $Q_1 \cdot Q_2 \in \overline{\NN_{s,u}^\s}$. This being true for all $\s > 0$, $Q_1 \cdot Q_2 \in \NN_{s,u},$ so that $(4)$ is true. For $(5)$, just recall that $\NN_{s,t} \subset \GG(\mu_s,\mu_t)$ and apply Lemma \ref{lem:compo_gaussien_stab}. Thus, Theorem \ref{Them:Kellerer_consistency} applies and there exists a Markov measure $ P' \in \PP(\R^T)$ satisfying $P'^{s,t} \in \NN_{s,t}$ for every $s<t \in T^2$. This means exactly that $P'$ is a weak local Markov transform of $P$.
	\end{proof}
	
	Theorem \ref{them:fonda_NB_gaussien} improves Theorem \ref{them:fonda_NB} for Gaussian measures. First, our result is  valid for any $(\mu_t)_{ t \in T} \in \left(\GG_d^*\right)^{\N^*}$ with $d \geq 1$, whereas Theorem \ref{them:fonda_NB} only applies when $d=1$. Moreover if $d = 1$, we do not ask that $P \in \Marg((\mu_t)_{t \in T})$ has increasing kernels. For a Gaussian process, having increasing kernels means having a non-negative covariance function. Indeed, for $P \in \Marg((\mu_t)_{t \in T})$  with covariance function $K$, according to the first point of Lemma \ref{lem:gauss_cond}, $P^{s,t}(dx,dy) = \mu_t(dx)k^{s,t}_x(dy)$ with $k^{s,t}_x = \NN\left( K(s,t)/K(s,s)x, K(t,t)-K(s,t)^2/K(s,s) \right)$. Recall that, for every $(x, y , \s) \in \R^2 \times \R_+^*$, $\NN(x, \s) \leq_S \NN(y,\s)$ if and only if $x \leq y$. Hence, $P^{s,t}$ has increasing kernels if and only if $K(s,t) \geq 0$, which proves that $P$ has increasing kernels if and only if $K$ only takes non-negative values.

	\section{Identification criteria of Markov transform for Gaussian measure.}\label{sec:Identification_criteria}
	
	From now on, we  restrict ourselves to real-valued Gaussian processes. The proof of the following proposition is straightforward, but the result is nevertheless crucial. It uses that, for every Gaussian process $P$, $s<t \in \R^2$ and $R = (t_0 < \dots < t_m)$, the correlation coefficient of $P_{\{R\}}^{s,t}$ is the product of the correlation coefficients of  $P^{t_0,t_1}, \dots, P^{t_{d-1},t_d}$.
	\begin{pro}\label{pro:formulation_calculatoire}
		Fix $(\mu_t)_{t \in T} \in \left(\GG_1^*\right)^T$ and denote by $P,P' \in \Marg((\mu_t)_{t \in T})$ two Gaussian processes with covariance functions $K$ and $K'$ respectively. For every $s<t \in T^2$ and $(R_n)_{n \geq 1} = \left((t_k^n\right)_{k \in \ent{1}{m_n}})_{n \geq 1} \in (\SS_{[s,t]})^{\N^*},$ the following conditions are equivalent: 
		\begin{enumerate}
		  \item $P'^{s,t} = \lim_{n \to +\infty} P_{\{R_n\}}^{s,t}$,
		  \item $ K'(s,t) = \lim_{n \to + \infty} \frac{\prod_{k=0}^{m_n-1} K(t_k^n,t_{k+1}^n)}{{\prod_{k=1}^{m_n-1}} K(t_k^n,t_k^n)} .$
		\end{enumerate}
	\end{pro}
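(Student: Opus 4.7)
My plan is to reduce the weak convergence of a sequence of bivariate centered Gaussian measures with fixed marginals to the convergence of a single scalar quantity (the covariance), and then rewrite that covariance using the composition formula from Lemma \ref{lem:compo_gaussien}.

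First I would observe that, since $P \in \Marg((\mu_t)_{t \in T})$ and each $\mu_t \in \GG_1^*$ is centered, each two-dimensional marginal $P^{t_k^n, t_{k+1}^n}$ lies in $\GG(\mu_{t_k^n}, \mu_{t_{k+1}^n})$. By the composition formula of Lemma \ref{lem:compo_gaussien} applied recursively along $R_n = (t_0^n < \cdots < t_{m_n}^n)$, the measure $P^{s,t}_{\{R_n\}}$ is centered Gaussian in $\GG(\mu_s, \mu_t)$, with off-diagonal covariance entry
\[
\Sigma_{P^{s,t}_{\{R_n\}}}^{1,1} \;=\; K(t_0^n, t_1^n)\, K(t_1^n,t_1^n)^{-1}\, K(t_1^n,t_2^n) \cdots K(t_{m_n-1}^n,t_{m_n-1}^n)^{-1}\, K(t_{m_n-1}^n, t_{m_n}^n) \;=\; \frac{\prod_{k=0}^{m_n-1} K(t_k^n, t_{k+1}^n)}{\prod_{k=1}^{m_n-1} K(t_k^n, t_k^n)}.
\]
Since the diagonal entries of its covariance matrix are the fixed quantities $K(s,s)$ and $K(t,t)$, the full covariance matrix of $P^{s,t}_{\{R_n\}}$ is determined by the scalar above.

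Next I would invoke Lemma \ref{lem:stab_gaussienne}: weak convergence of a sequence of centered Gaussian measures on $\R^2$ is equivalent to convergence of their covariance matrices (and the limit is then automatically centered Gaussian). Since all $P^{s,t}_{\{R_n\}}$ and $P'^{s,t}$ are centered Gaussian with the same fixed marginals $\mu_s$ and $\mu_t$, convergence of the covariance matrices reduces to convergence of the single off-diagonal entry, which is exactly $\Sigma_{P^{s,t}_{\{R_n\}}}^{1,1}$ on one side and $K'(s,t)$ on the other.

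Combining the two points above yields the equivalence: condition $(1)$ is equivalent to $K'(s,t) = \lim_{n \to +\infty} \Sigma_{P^{s,t}_{\{R_n\}}}^{1,1}$, which by the explicit formula is exactly condition $(2)$. There is no real obstacle here: the only thing to check carefully is that, under the hypothesis $(\mu_t)_{t \in T} \in (\GG_1^*)^T$, the denominators $K(t_k^n, t_k^n)$ do not vanish, which is guaranteed because $\mu_{t_k^n} \in \GG_1^*$ has strictly positive variance. The proof is therefore essentially a direct translation of Lemmas \ref{lem:stab_gaussienne} and \ref{lem:compo_gaussien} into the scalar setting.
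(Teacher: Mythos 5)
Your proposal is correct and follows essentially the same route as the paper's proof: reduce weak convergence of the centered bivariate Gaussian laws to convergence of covariance matrices via Lemma \ref{lem:stab_gaussienne}, then identify the off-diagonal entry of $\Sigma_{P^{s,t}_{\{R_n\}}}$ with the fraction in condition $(2)$ using the composition formula of Lemma \ref{lem:compo_gaussien}. The additional observations (fixed diagonal entries, non-vanishing denominators from $\GG_1^*$) are correct and merely make explicit what the paper leaves implicit.
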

	\begin{proof}
		According to Lemma \ref{lem:stab_gaussienne}, $P'^{s,t} = \lim_{n \to + \infty} P_{\{R_n\}}^{s,t}$ if and only if \ $ \Sigma_{P'^{s,t}} = \lim_{n \to + \infty} \Sigma_{P_{\{R_n\}}^{s,t}}$. Hence, writing $u_n$ the fraction appearing in the limit of Point $2.$, the composition formula of Lemma \ref{lem:compo_gaussien} implies that $P'^{s,t} = \lim_{n \to + \infty} P_{\{R_n\}}^{s,t}$ boils down to 
		\[ \lim_{n \to + \infty}
		\begin{pmatrix}
			\Sigma_{\mu_s} & u_n \\
			u_n & \Sigma_{\mu_t} \\
		\end{pmatrix}
		=
		\begin{pmatrix}
			\Sigma_{\mu_s} & K'(s,t) \\
			K'(s,t) & \Sigma_{\mu_t} \\
		\end{pmatrix},\] that is $K'(s,t) = \lim_{n \to + \infty} u_n .$
	\end{proof}
	
	If one writes $c_K :(s,t) \mapsto K(s,t)/\sqrt{K(s,s)K(t,t)}$ for the correlation function of $P$ and $c_{K'} :(s,t) \mapsto K'(s,t)/\sqrt{K'(s,s)K'(t,t)}$ for the correlation function of $K'$, the second point becomes \[\lim_{n \to + \infty} c_K(t_0^n,t_1^n)\cdots c_K(t_{m_n-1}^n,t_{m_n}^n) = c_{K'}(s,t).\] 
	
	According to Proposition \ref{pro:formulation_calculatoire}, being a local Markov transform of a Gaussian measure is a property depending only on the covariance functions of the involved measures. Thus, in order to find a criteria to identify Markov transforms of Gaussian measures, we focus on covariance functions of Gaussian measures, \ie ,\ positive semi-definite kernels.  For $T \subset \R $ and $K : T \times T \to \R$, we recall  that $K$ is said to be a positive semi-definite kernel if for all $(t_1,\dots,t_m) \in T^m$, the matrix $(K(t_i,t_j))_{1 \leq i,j \leq m}$ is symmetric and positive semi-definite. Moreover, this kernel is said to be stationary if there exists $\ti{K} :\R \to \R$ such that, for every $(s,t) \in T \times T$, we have $ K(s,t) = \ti{K}(t-s)$.  It is a standard fact that a function  $K : T \times T \to \R$ is a positive semi-definite  kernel (resp. stationary positive semi-definite  kernel) if and only if there exists a Gaussian measure (resp. a stationary Gaussian measure) on $\R^T$  with covariance function $K$. For a proof, one can e.g. refer to \cite[Chapter $3$]{von_mises_mathematical_1964}. Before stating our criteria to identify Markov transform of Gaussian measure, we define the variance, correlation and instantaneous decorrelation rate associated with a positive semi-definite kernel. 
	\begin{defi}
		 Let  $K : T \times T \to \R$ be a positive semi-definite  kernel. We denote by $v_K : t \in T \mapsto K(t,t) \in \R$ the variance function of the kernel $K$ and by $\s_K := \sqrt{v_K}$ its standard deviation function.  
		 \begin{enumerate}
		 	\item The kernel $K$ is said non-singular if its variance function of $K$ takes positive values.
		 	\item If $K$ is non-singular, we denote by $c_K :(s,t) \mapsto \s_K^{-1}(s)\s_K^{-1}(t)K(s,t)$ its correlation function. Moreover, for every point $t \in T \setminus \{\sup T \}$, we define the decay rate of the correlation of $K$ at point $t$ as the function   $L^K_t : h \in (T-t) \cap \R_+^* \mapsto h^{-1}(1-{c_K}(t,t+h)) \in \R_+$.
		 	\item If for every $t \in T \setminus\{\sup T)\}$, $ \a^K(t) := \lim_{h \to 0^+} L_t^K(h)$ exists, then $\a^K : t \in T \setminus \{\sup T \} \mapsto \a^K(t) \in \R_+$ is well-defined and we call it the instantaneous decorrelation rate of $K.$
		 \end{enumerate}    
	\end{defi}

	If $X = (X_t)_{t \in T}$ is a centered real-valued random process with covariance function $K$, then $v_K$ is the variance function of $X$, $c_K$ is the correlation function of $X$ and $K$ is non-singular if and only if $X_t = 0$ a.s. never happens. Denote by $\C(X_s,X_t)$ the correlation between $X_s$ and $X_t$. Then, the map $L_t^K : h \mapsto L_t^K(h) = h^{-1} \left( \C(X_t,X_t) - \C(X_s,X_t)\right)$ is the decay rate of the function $s \mapsto \C(X_s, X_t)$, that is the decay rate of the correlation to $X_t.$ Hence, $\a^K(t)$ is the instantaneous decay of the correlation to $X_t$, \ie , the instantaneous decay rate of $X$ at time $t.$

	\begin{pro}\label{pro:alpha_proces}
		Consider an interval $T \subset \R$ and a non-negative measurable map $\a : T \to [0,+\infty]$. Then $K_\a : (s,t) \in T \times T \mapsto \exp\left( - \int_{\min(s,t)}^{\max(s, t)} \a(u) du \right)$ is a positive semi-definite kernel and the centered Gaussian measure with covariance function $K_\a$ is Markov. We denote this process $P_\a \in \PP(\R^T)$,
	\end{pro}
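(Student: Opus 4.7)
The plan is to exploit the multiplicative structure of $K_\a$ along intervals, first to obtain positive semi-definiteness via an explicit AR(1)-type Gaussian construction, and then to conclude the Markov property directly from Proposition \ref{pro:criteria_gauss_Markov}.

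The first step is to record the key algebraic identity. Since $K_\a(t,t) = e^0 = 1$, the variance function is constant equal to $1$. Additivity of the integral (using the convention $e^{-\infty}=0$) yields, for every $s<t<u \in T^3$,
\[K_\a(s,u) = \exp\!\left(-\int_s^t \a(v)\,dv\right)\exp\!\left(-\int_t^u \a(v)\,dv\right) = K_\a(s,t)\, K_\a(t,u).\]
This multiplicativity drives both the positive semi-definiteness and the Markov property.

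To prove that $K_\a$ is positive semi-definite, I would fix any $t_1 < \dots < t_m \in T^m$, set $\rho_k := K_\a(t_k,t_{k+1}) \in [0,1]$ for $k \in \ent{1}{m-1}$, and construct a Gaussian vector $(Y_1,\dots,Y_m)$ with the desired Gram matrix through the AR(1)-type recursion $Y_1 \sim \NN(0,1)$ and $Y_{k+1} := \rho_k Y_k + \sqrt{1-\rho_k^2}\, Z_k$, where the $Z_k$ are i.i.d.\ $\NN(0,1)$, independent of $Y_1$. An induction on $k$ gives $\var(Y_k) = 1$, and an induction on $j-i$ gives $\mathrm{Cov}(Y_i,Y_j) = \rho_i \cdots \rho_{j-1} = K_\a(t_i,t_j)$ for $i<j$, using that $Z_k$ is independent of $(Y_1,\dots,Y_k)$. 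Hence $(K_\a(t_i,t_j))_{1 \leq i,j \leq m}$ is the covariance matrix of $(Y_1,\dots,Y_m)$, and therefore is positive semi-definite. The construction remains meaningful in the boundary cases $\rho_k \in \{0,1\}$, so no integrability assumption on $\a$ is required.

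Finally, by the standard characterization of covariance functions recalled just before the statement, there exists a centered Gaussian measure $P_\a \in \PP(\R^T)$ with covariance function $K_\a$. All its one-dimensional marginals equal $\NN(0,1) \in \GG_1^*$, so Proposition \ref{pro:criteria_gauss_Markov} applies: with $\Sigma_{t,t}=1$ and $\Sigma_{s,t}=K_\a(s,t)$, the Markov criterion $\Sigma_{s,u}=\Sigma_{s,t}\Sigma_{t,t}^{-1}\Sigma_{t,u}$ is exactly the multiplicative identity of the first step. Therefore $P_\a$ is a Markov measure. I do not expect a serious obstacle; the only mild care is to allow $\rho_k = 0$ when $\a$ fails to be integrable on $[t_k,t_{k+1}]$, which only decouples $Y_{k+1}$ from the past and is harmless.
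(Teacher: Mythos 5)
Your proof is correct. It rests on the same two pillars as the paper's argument --- the multiplicative identity $K_\a(s,u)=K_\a(s,t)K_\a(t,u)$ (the paper's Equation \eqref{eq:Markov_P_alpha}) and the Markov criterion of Proposition \ref{pro:criteria_gauss_Markov} --- but you establish positive semi-definiteness and the existence of $P_\a$ by a different route. The paper first checks that each two-dimensional matrix $A_{s,t}$ is positive semi-definite (trace and determinant), verifies $\mu_{s,t}\cdot\mu_{t,u}=\mu_{s,u}$ via the composition formula of Lemma \ref{lem:compo_gaussien}, invokes the Kolmogorov extension theorem for a Markov measure fitting a consistent family of two-dimensional laws, and only then reads off from the concatenation formula that all finite-dimensional laws are Gaussian, so that $K_\a$ is positive semi-definite as a byproduct. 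You instead exhibit, for each finite set of times, an explicit AR(1)-type Gaussian vector whose Gram matrix is $(K_\a(t_i,t_j))_{i,j}$, which proves positive semi-definiteness directly and elementarily, and then appeal to the standard equivalence (recalled in the paper just before the statement) between positive semi-definite kernels and covariance functions of Gaussian measures. Your recursion $Y_{k+1}=\rho_k Y_k+\sqrt{1-\rho_k^2}\,Z_k$ is of course the concatenation $P^{t_1,t_2}\circ\cdots\circ P^{t_{m-1},t_m}$ in disguise, but written this way it avoids both the concatenation formula and the Markov version of the Kolmogorov extension theorem, at the cost of leaning on the (unproved but standard) existence of a Gaussian measure with a prescribed positive semi-definite covariance. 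Your handling of the degenerate cases ($\rho_k\in\{0,1\}$, non-integrable $\a$, the convention $e^{-\infty}=0$) is sound, and the final application of Proposition \ref{pro:criteria_gauss_Markov} is legitimate since the marginals are $\NN(0,1)\in\GG_1^*$.
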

	\begin{proof}
		First, we prove that $K_\a$ is a positive semi-definite kernel. Fix $s<t \in T^2$ and set $A_{s,t} := \begin{pmatrix}
			K_\a(s,s) & 	K_\a(s,t) \\ 	K_\a(t,s) & 	K_\a(t,t)
		\end{pmatrix}.$ Since $\Tr(A_{s,t}) = K_\a(s,s) + K_\a(t,t) = 1 + 1 = 2 > 0 $ and $\det(A_{s,t}) =  1 - \exp(-2\int^{s}_{ t}\a(v)dv) \geq 0$, the matrix $A_{s,t}$ is positive semi-definite and the probability measure $\mu_{s,t} := \NN(0,A_{s,t})$ is well defined. For every $s<t<u \in T^3$, the composition formula in Lemma \ref{lem:compo_gaussien} implies $\mu_{s,t} \cdot \mu_{t,u} = \NN(0,A),$ where \[A = \begin{pmatrix}
			K_\a(s,s) & K_\a(s,t)K_\a(t,u)/K_\a(t,t) \\  K_\a(s,t)K_\a(t,u)/K_\a(t,t) & K_\a(u,u)
		\end{pmatrix}.\] Since 
		\begin{equation}\label{eq:Markov_P_alpha}
			\frac{K_\a(s,t)K_\a(t,u)}{K_\a(t,t)} = \exp\left( -\int_s^t \a(u) du\right) \exp\left( -\int_t^u \a(x) dx\right)/1 = K_\a(s,u),
		\end{equation} we have $A = A_{s,u}$, which implies $\mu_{s,t} \cdot \mu_{t,u} = \mu_{s,u}.$ According to the Kolmogorov extension theorem, there exists a unique Markov measure $P_\a \in \PP(\R^T)$ such that $P_\a^{s,t} = \mu_{s,t}$ for every $s<t \in T^2.$ According to the composition formula in Lemma \ref{lem:compo_gaussien}, for every $t_1< \cdots < t_m \in T^m$, we have $P_\a^{t_1, \dots,t_m} = P_\a^{t_1,t_2} \circ \cdots \circ P_\a^{t_{m-1},t_m} \in \GG_m$. Hence $P_\a$ is a Gaussian process and its covariance function is $K_\a.$ In particular, $K_\a$ is a positive semi-definite kernel. Finally, according to Proposition \ref{pro:criteria_gauss_Markov} and Equation \eqref{eq:Markov_P_alpha}, $P_\a$ is Markov .
	\end{proof}

	\begin{remq}\label{remq:interpretation_sto_P_alpha}
		\begin{enumerate}
			\item If $\a$ is constant equal to $0$, then $K_\a(s,t) = 1$ for every $s<t \in \R^2$. Thus $P_\a$ is the law of a completely correlated process $ (Z)_{t \in \R}$, where $Z \sim \NN(0,1).$ 
			\item If $\a$ is constant equal to $+\infty$, then $K_\a(s,t) = 0$ for every $s<t \in \R^2$. Hence, $P_{\a}$ is the law of a sequence $ (X_t)_{t \in \R}$ of i.i.d. random variables with law $\NN(0,1).$
			\item  The stationary Ornstein-Uhlenbeck process with parameter $\a \in \R_+^*$ is defined as the solution to the stochastic differential equation 
			\begin{equation}
				\begin{cases}
				dX_t = -\alpha X_t dt + d B_t \\
				X_0 =Z
				\end{cases},
			\end{equation}
		
		where $(B_t)_{t \geq 0}$ is a standard Brownian motion and $Z$ is a random variable independent from $(B_t)_{t \geq 0}$ with law $\NN(0,1/2\a)$. It is well known that a stationary Ornstein-Uhlenbeck is a centered Gaussian process with covariance function $(s,t) \mapsto \frac{1}{2\alpha}\exp\left(-\a |t-s| \right).$ Thus $P_{\a}$ is the law of $(\sqrt{2\a} X_t)_{t \geq 0}$, where $(X_t)_{t \geq 0}$ is an Ornstein-Uhlenbeck process.
		\end{enumerate}
		
	\end{remq}
	We can now state our criterion to identify the strong local Markov transforms. Since the expansion of $\ln(1+x)$ at point $0$ will often be used, we fix  a notation.
	\begin{nott}\label{nott:DL_Taylor}
		Let $\ee : ]-1,+\infty[ \to \R$ be the function defined by \[ \ee(x) = \begin{cases}
			\frac{\ln(1+x)-x}{x} &\text{if } x \in ]-1,+\infty[ \setminus \{0\} \\
			0 &\text{if } x = 0
		\end{cases}.\] This function is continuous, $\ee(0) = 0$ and $\ln(1+x) = x + x \ee(x)$ for every $x \in ]-1,+ \infty[.$
	\end{nott}

	\begin{them}\label{them:Markov_stationnaire}
		Consider an interval  $T \subset \R$  and  a continuous positive semi-definite kernel $K : T \times T \to \R$ with constant variance function equal to $1.$ We denote by $P$ the centered Gaussian process with covariance function $K.$ For every $(s,t,h^*) \in T \times T \times \R_+^*$, we set  $$C_{s,t}(h^*) := \ens{(v,h)}{v \in [s,t[, v +h \in [s,t], h \in ] 0, h^*]}.$$
		
		\begin{enumerate}
			\item Assume $\a^K$ is well defined, continuous and for every $ s<t \in T \times T$
			\begin{equation}\label{eq:conv_fini}
				 \sup_{(v,h) \in C_{s,t}(h^*)} \left|L^K_v(h) - \a^K(v)\right| \dcv{h^* \to 0^+} 0.
			\end{equation} 
			Then  $P_{\a^K}$ is the strong local Markov transform of $P.$
			\item Assume, for every $s<t  \in T \times T$ 
			\begin{equation}\label{eq:conv_infini}
				\inf_{(v,h) \in C_{s,t}(h^*)} L^K_v(h) \dcv{h^* \to 0^+} + \infty.
			\end{equation}
		Then $P_{+\infty}$ is the strong local Markov transform of $P.$
		\end{enumerate}

	\end{them}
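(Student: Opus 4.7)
The plan is to combine Proposition \ref{pro:formulation_calculatoire} with the explicit covariance of $P_{\a^K}$ (resp.\ $P_{+\infty}$) given by Proposition \ref{pro:alpha_proces}, which reduces everything to a single scalar limit. Since $v_K \equiv 1$, for every $s<t \in T^2$ and every admissible sequence $(R_n = (t_0^n<\dots<t_{m_n}^n))_{n\geq 1} \in (\SS_{[s,t]})^{\N^*}$ with $\s_{R_n} \to 0$, Proposition \ref{pro:formulation_calculatoire} says that it suffices to show
\[
\prod_{k=0}^{m_n-1} K(t_k^n,t_{k+1}^n) \dcv{n \to +\infty} K'(s,t),
\]
where in Part $1$ we have $K'(s,t) = \exp\bigl(-\int_s^t \a^K(u)\,du\bigr)$ and in Part $2$ we have $K'(s,t) = 0$. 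Writing $h_k := t_{k+1}^n - t_k^n$, I will take logarithms and identify the sum with a Riemann sum for $\int_s^t \a^K$.

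For Part $1$, hypothesis \eqref{eq:conv_fini} together with continuity of $\a^K$ on the compact $[s,t]$ gives an $h^*>0$ and an $M<+\infty$ with $L_v^K(h) \leq M$ on $C_{s,t}(h^*)$; hence $K(t_k^n,t_{k+1}^n) = 1 - h_k L_{t_k^n}^K(h_k) \geq 1 - h_k M > 0$ for $n$ large enough, so the logarithms are well defined. Setting $x_k^n := -h_k L_{t_k^n}^K(h_k)$, one has $|x_k^n| \leq h_k M \leq \s_{R_n} M \to 0$ uniformly in $k$, and Notation \ref{nott:DL_Taylor} yields
\[
\sum_{k=0}^{m_n-1} \log K(t_k^n,t_{k+1}^n) = -\sum_{k=0}^{m_n-1} h_k L_{t_k^n}^K(h_k) + \sum_{k=0}^{m_n-1} x_k^n \, \ee(x_k^n).
\]
The remainder is bounded by $\bigl(\sup_k |\ee(x_k^n)|\bigr) \cdot \sum_k|x_k^n| \leq M(t-s)\sup_k|\ee(x_k^n)| \to 0$ by continuity of $\ee$ at $0$. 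For the main sum, \eqref{eq:conv_fini} gives $\sup_k |L_{t_k^n}^K(h_k) - \a^K(t_k^n)| \to 0$, so
\[
\left|\sum_{k} h_k L_{t_k^n}^K(h_k) - \sum_{k} h_k \a^K(t_k^n)\right| \leq (t-s) \sup_k |L_{t_k^n}^K(h_k) - \a^K(t_k^n)| \dcv{n \to +\infty} 0,
\]
and since $\a^K$ is continuous the Riemann sum $\sum_k h_k \a^K(t_k^n)$ converges to $\int_s^t \a^K(u)\,du$. Exponentiating gives the desired limit.

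For Part $2$, continuity of $K$ together with $K(v,v)=1$ ensures $K(t_k^n,t_{k+1}^n)>0$ once the mesh is small enough. Using $\log(1-x) \leq -x$ on $[0,1)$ and telescoping $\sum_k h_k = t-s$, I get
\[
\log \prod_{k=0}^{m_n-1} K(t_k^n,t_{k+1}^n) \leq -\sum_{k=0}^{m_n-1} h_k L_{t_k^n}^K(h_k) \leq -(t-s) \inf_{(v,h)\in C_{s,t}(\s_{R_n})} L_v^K(h),
\]
and \eqref{eq:conv_infini} drives the right-hand side to $-\infty$, so the product goes to $0 = K'(s,t)$.

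The main obstacle is the bookkeeping in Part $1$: one must verify the uniform-in-$k$ control of both $|x_k^n|$ and of the difference between $L_{t_k^n}^K(h_k)$ and $\a^K(t_k^n)$ so that the Taylor remainder really vanishes while the leading term converges to the integral. Everything else reduces to a standard Riemann sum argument and to the elementary estimate $\log(1-x) \leq -x$.
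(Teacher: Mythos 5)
Your proposal is correct and follows essentially the same route as the paper: reduce to the scalar limit via Proposition \ref{pro:formulation_calculatoire}, take logarithms, and split into the Riemann-sum discrepancy controlled by \eqref{eq:conv_fini}, the Riemann-sum convergence for the continuous $\a^K$, and the Taylor remainder controlled by $\ee$. The only (harmless) deviations are that you extract the uniform bound $M$ on $L^K_v(h)$ directly from \eqref{eq:conv_fini} rather than by proving continuity of the extended $L$ on the compact set $C_{s,t}$ as the paper does, and in Part $2$ you replace the paper's estimate $\ee \geq -1/2$ by the elementary inequality $\log(1-x)\leq -x$, which is slightly cleaner.
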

	\begin{proof}
	In order to prove the \emph{first point}, let fix $s<t \in T^2$ and $(R_n)_{n \geq 1} \in \left(\SS_{[s,t]}\right)^{\N^*}$ such that $ \lim_{n \to +\infty} \s_{R_n} =  0.$ Put $L := L^K $, $\a := \a^K$, $R_n = (t_0^n,\dots,t_{m_n}^n)$ for $n \geq 1$ and  $h^n_k := t_{k+1}^n - t_k^n$ for  $k \in \ent{0}{m_n - 1}$. According to Proposition \ref{pro:formulation_calculatoire}, we have to establish the limit \[\prod_{k=0}^{m_n-1} K(t_k^n,t_{k+1}^n) \dcv{n \to +\infty} \exp\left(-\int_s^t\a(u)du\right),\] that is,
		\begin{equation}\label{eq:conv_num_schema}
			\sum_{k=0}^{m_n-1} \ln\left(K(t_{k}^n,t_{k}^n + h_k^n)\right) \dcv{n \to + \infty} -\int_s^t \a(u) du.
		\end{equation} 
		Defining $\ee$ as in Notation \ref{nott:DL_Taylor}, for every $(v,h) \in T \times \R^*$ such that $v+h \in T$ and $K(v,v+h)>0$ , we get $\ln(K(v,v+h)) = (K(v,v+h)-1)\left[1+\ee\left(K(v,v+h)-1\right)\right] = -h L_v(h)\left[1+\ee\left(-h L_v(h)\right)\right].$ Thus, \begin{equation}\label{eq:dl}
			\sum_{k=0}^{m_n-1} \ln\left(K(t_{k}^n,t_{k}^n + h_k^n)\right) =  -\sum_{k=0}^{m_n-1} h_k^n L_{t_k^n}(h_k^n)\big[1 + \ee\left(-h_k^n L_{t_k^n}(h_k^n)\right)\big],
		\end{equation} which implies that $u_n := \Big{|} \underbrace{\sum_{k=0}^{m_n-1} \ln\left(K(t_{k}^n,t_{k}^n + h_k^n)\right)}_{\leq 0} - \underbrace{\left(-\int_s^t \a(u) du \right)}_{\leq 0} \Big{|}$ satisfies
		\begin{align*}
		u_n &= \left| \sum_{k=0}^{m_n-1} -h_k^n L_{t_k^n}(h_k^n)\left[1 + \ee\left( -h_k^nL_{t_k^n}(h_k^n) \right) \right] - \left( - \int_s^t \a(u) du \right)   \right| \\
		&= \left| \sum_{k=0}^{m_n-1} h_k^n L_{t_k^n}(h_k^n) - \left(  \int_s^t \a(u) du \right) \right. + \left. \sum_{k=0}^{m_n-1} h_k^n L_{t_k^n}(h_k^n) \ee\left(-h_k^n L_{t_k^n}(h_k^n)  \right) \right|\\
		&= \left| \left( \sum_{k=0}^{m_n-1} h_k^n L_{t_k^n}(h_k^n) - \sum_{k=0}^{m_n-1 } h_k^n \a(t_k^n)\right)  +  \left( \sum_{k=0}^{m_n-1} h_k^n \a(t_k^n) - \int_s^t \a(u)du \right) \right. + \left. \sum_{k=0}^{m_n-1} h_k^n L_{t_k^n}(h_k^n)\ee\left(-h_k^nL_{t_k^n}(h_k^n) \right) \right| \\
		&\leq  \underbrace{\left|\sum_{k=0}^{m_n-1} h_k^n L_{t_k^n}(h_k^n) - \sum_{k=0}^{m_n-1} h_k^n \a(t_k^n) \right|}_{:= \ a_n} +\underbrace{\left| \sum_{k=0}^{m_n-1} h_k^n \a(t_k^n) - \int_s^t \a(u) du \right|}_{:= \ b_n}  + \underbrace{\left|\sum_{k=0}^{m_n-1} h_k^n L_{t_k^n}(h_k^n) \ee\left(-h_k^n L_{t_k^n}(h_k^n)\right) \right|}_{:= \ c_n}  \\
		\end{align*}
		We are now left to prove that $\lim_{n \to + \infty} a_n = \lim_{n \to + \infty} b_n = \lim_{n \to + \infty} c_n = 0.$ For every $n \geq 1$, we have
		\begin{equation*}\label{eq:maj_a_n}
			0 \leq a_n \leq \sum_{k = 0}^{m_n- 1} h_k^n \left| L_{t_k^n}(h_k^n) - \a(t_k^n) \right| \leq |t-s| \sup_{(v,h) \in C_{s,t}(\s_{R_n})} |L_v(h) - \a(v) |.
		\end{equation*}
	According to	Hypothesis \eqref{eq:conv_fini}, $\lim_{n \to + \infty} a_n = 0.$  Since $\a$ is continuous and continuous functions are Riemann-integrable, we immediately get $\lim_{n \to + \infty} b_n = 0.$ In order to prove that $\lim_{n \to + \infty} c_n = 0$, put $C_{s,t} := \ens{(v,h)}{v \in [s,t], v +h \in [s,t],  0 \leq h}$ and  $L_v(0) := \a(v)$ for every $v \in [s,t]$. We want to prove that $L$ is continuous on the compact set $C_{s,t}.$ Since $K$ is continuous, we know that $L$ is continuous on $C_{s,t} \setminus \left([s,t] \times \{0\}\right)$ and we are left proving the continuity on $[s,t] \times \{0\}$. Let us fix $ \ee >0$, $v^* \in [s,t]$ and consider $ \a_\ee \in \R_+^*$ such that \[ \begin{cases}
		\sup_{ (v,h) \in C_{s,t}(\a_\ee)} |L_v(h) - \a(v)| \leq \ee/2\\ \sup_{v \in [s,t] ; |v-v^*| \leq \a_\ee} |\a(v)-\a(v^*)| \leq \ee/2
	\end{cases}. \] For every $(v,h) \in C_{s,t} \cap \left( [v^*-\a_\ee, v^* + \a_\ee] \times [-\a_\ee, \a_\ee] \right) \subset C_{s,t}(\a_\ee) \cup \left([v^*-\a_\ee,v^*+\a_\ee] \times \{0\}\right)$, we have
	\begin{align*}
		|L_v(h) - L_{v^*}(0) | &= |L_v(h) - \a(v^*) |  \\ &\leq |L_v(h) - \a(v)| + |\a(v) - \a(v^*)| \\ &\leq \sup_{ (v,h) \in C_{s,t}(\a_\ee)} |L_v(h) - \a(v)| + \sup_{v \in [s,t];|v-v^*| \leq \a_\ee} |\a(v)-\a(v^*)| \\ &\leq \ee/2 + \ee/2 = \ee,
	\end{align*}
	which shows the desired continuity.
	Hence $M := \sup_{(v,h) \in C_{s,t}} L_v(h)$ is finite and we have \begin{equation*}
		0 \leq c_n \leq \sum_{k=0}^{m_n-1} h_k^n L_{t_k^n}(h_k^n) \left| \ee\left(- h_k^n L_{t_k^n}(h_k^n)\right) \right| \leq |t-s| M \sup_{ |x| \leq M \s_{R_n}} |\ee(x)|,
	\end{equation*}
		which implies $\lim_{n \to + \infty} c_n = 0$ and finishes the proof of the first point. We now prove the \emph{second point}. Using the same notation as before, we are left to prove \eqref{eq:conv_num_schema} with $\a$ constant equal to $+ \infty$, that is $\lim_{n \to +\infty} \sum_{k=0}^{m_n-1} \ln\left({K}(t^n_{k},t^n_{k+1})\right) = - \infty$. As for \eqref{eq:dl}, this amounts to show \[ \sum_{k=0}^{m_n-1}  h_k^n L_{t_k^n}(h_k^n)\big[1 + \ee\left(-h_k^n L_{t_k^n}(h_k^n)\right)\big] \dcv{n \to + \infty} + \infty. \]
		One can find a rank $N \geq 1$ such that for every $n \geq N$ and $k \in \ent{0}{m_n-1}$, we have $ \ee\left(-h_k^n L_{t_k^n}(h_k^n)\right) = \ee\left(K(t_k^n,t_k^n + h_k^n)-1\right) \geq -1/2$. Thus, for every $n \geq N$,  
		\begin{align*}
			\sum_{k=0}^{m_n-1} h_k^n L_{t_k^n}(h_k^n)\left[1 + \ee \left(-h_k^n L_{t_k^n}(h_k^n)\right)\right] &\geq \sum_{k=0}^{m_n-1} h_k^n L_{t_k^n}(h_k^n)2^{-1} \geq \frac{t-s}{2}v_n,
		\end{align*} 
		where $v_n :=\inf_{(v,h) \in C_{s,t}(\s_{R_n})} L_v(h).$ According to Hypothesis \eqref{eq:conv_infini} we get $\lim_{n \to + \infty} v_n = + \infty$, which finishes the proof.  
	\end{proof}
	\begin{remq}\label{remq:commentaire_hyp_markovinification}
		\begin{enumerate}
			\item We ask that the variance of $K$ is constant equal to $1$ and $P$ is centered  only  to simplify the statement of our result and the notation used in the proof. If we just assume that $K$ is non-singular, \ie ,\ the variance function of $K$ is positive, we obtain that the Gaussian process with covariance $K' : (s,t) \mapsto \sqrt{K(s,s)K(t,t)}K_{\a}(s,t)$ and same mean function as $P$ is the strong local Markov transform of $P$. This is a straightforward consequence of Remark \ref{remq:centered_enough}.
			\item\label{point:cas_stationnaire} In the case of a stationary kernel $K : (s,t) \in T^2 \mapsto \ti{K}(t-s)$, the value of the map $L_t^K : h \mapsto h^{-1}\left(1-\ti{K}(0)^{-1}\ti{K}(h)\right)$ does not depend on $t$ and Equation \eqref{eq:conv_fini} and Equation \eqref{eq:conv_infini} become $\lim_{h \to 0^+} h^{-1}(1-\ti{K}(h)) = \a \in [0,+\infty]$. In this case, Theorem  \ref{them:Markov_stationnaire} states that, if $K$ is continuous, $\ti{K}(0) = 1$ and $\lim_{h \to 0^+} h^{-1}(1-\ti{K}(h)) = \a \in [0,+\infty],$ then $P_{\a}$ is the strong Markov transform of $P.$
		\end{enumerate}
	\end{remq}

	\begin{them}\label{them:Markov_stationnaire_bis}
		Consider an interval $T$ and a stationary  positive semi-definite kernel $K :(s,t) \in T^2 \mapsto \ti{K}(t-s) \in \R$ that is continuous and satisfies $\ti{K}(0) = 1.$ We denote by $P$ a centered Gaussian measure with covariance function $K$ and set $L^K : h \in \R_+^* \mapsto h^{-1}(1-\ti{K}(h)).$
		\begin{enumerate}
			\item\label{point:Markov_stationnaire_limite} If $\lim_{h \to 0^+} L^K(h) = \a \in [0, +\infty],$ then $P_\a$ is the strong local Markov transform of $P$.
			\item\label{point:Markov_stationnaire_va} Assume $\a \in [0,+\infty]$ is a cluster point of $L^K$ at $0^+$ and consider a sequence of positive numbers $(s_n)_{n \geq 1}$ converging to zero that satisfies $\lim_{n \to + \infty} L^K(s_n) = \a$. For every $s<t \in T^2$, writing $R_n^{s,t} := \left((s_n \Z) \cap ]s,t[\right) \cup \{s,t\}$, we obtain $\lim_{n \to + \infty} P_{\{R_n^{s,t}\}}^{s,t} = P_{\a}^{s,t}$. In particular, $P_\a$ is a  weak local Markov transform of $P.$
		\end{enumerate}
	\end{them}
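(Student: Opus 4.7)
\textbf{Point 1} should follow directly from Theorem~\ref{them:Markov_stationnaire} combined with Remark~\ref{remq:commentaire_hyp_markovinification}(\ref{point:cas_stationnaire}). The stationarity of $K$ makes $L_v^K(h)$ independent of $v$, so the hypothesized pointwise limit $L^K(h) \to \a$ automatically provides either the uniform convergence \eqref{eq:conv_fini} (when $\a \in [0,+\infty)$, with $\a^K$ taken to be the constant $\a$) or the uniform divergence \eqref{eq:conv_infini} (when $\a = +\infty$). The conclusion of Theorem~\ref{them:Markov_stationnaire} then identifies $P_\a$ as the strong local Markov transform of $P$.

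For \textbf{Point 2}, the plan is first to observe that $P_\a$ is a Markov measure by Proposition~\ref{pro:alpha_proces}, so it suffices to prove the stated convergence $P^{s,t}_{\{R_n^{s,t}\}} \to P_\a^{s,t}$ for each $s<t \in T^2$; since the mesh of $R_n^{s,t}$ is at most $s_n \to 0$, this exhibits the partition sequence required by the definition of a weak local Markov transform. Using that $v_K \equiv 1$, Proposition~\ref{pro:formulation_calculatoire} reduces the task to the numerical convergence
\[ \prod_{k=0}^{m_n-1} \ti{K}(t_{k+1}^n - t_k^n) \dcv{n \to + \infty} e^{-\a(t-s)}, \]
with the convention $e^{-\infty}=0$ when $\a=+\infty$, where $R_n^{s,t} = (t_0^n<\cdots<t_{m_n}^n)$.

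The final step is to exploit the specific structure of $R_n^{s,t}$: once $s_n < t-s$, its interior consists of $N_n \geq 1$ consecutive integer multiples of $s_n$, producing $N_n-1$ inner intervals of length exactly $s_n$ and two boundary intervals of lengths $h_n^\pm \in [0,s_n[$. The product factors as $\ti{K}(h_n^-)\,\ti{K}(s_n)^{N_n-1}\,\ti{K}(h_n^+)$, and the boundary factors tend to $\ti{K}(0)=1$ by continuity. Notation~\ref{nott:DL_Taylor} then rewrites
\[ (N_n-1)\ln \ti{K}(s_n) = -(N_n-1)\,s_n L^K(s_n)\bigl(1+\ee(-s_n L^K(s_n))\bigr). \]
The key (and most delicate) observation is that $s_n L^K(s_n) = 1 - \ti{K}(s_n)$ tends to $0$ purely by continuity of $\ti{K}$ at $0$, with no further regularity of $L^K$; this forces $\ee(-s_n L^K(s_n))\to 0$, and together with $(N_n-1)s_n \to t-s$ and $L^K(s_n)\to \a$ yields $(N_n-1)\ln \ti{K}(s_n)\to -(t-s)\a$ in $[-\infty,0]$. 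The main care is to treat $\a=+\infty$ uniformly with the finite case: there $L^K(s_n)$ diverges while $s_n L^K(s_n)$ still vanishes, so the correction $1+\ee(-s_n L^K(s_n))$ stays near $1$ and the bulk log sum diverges to $-\infty$, producing the correct limit $e^{-\infty}=0$. Reassembling the three factors and invoking Proposition~\ref{pro:formulation_calculatoire} concludes the proof.
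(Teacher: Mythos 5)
Your proposal is correct and follows essentially the same route as the paper: Point 1 is deduced exactly as you say from the stationary case of Theorem \ref{them:Markov_stationnaire} (Remark \ref{remq:commentaire_hyp_markovinification}), and for Point 2 the paper likewise splits the product into the two boundary factors (which tend to $\ti{K}(0)=1$ by continuity) and the identical inner factors $\ti{K}(s_n)$, whose logarithms are summed via the expansion of Notation \ref{nott:DL_Taylor} using precisely your key observation that $\ti{K}(s_n)-1=-s_nL^K(s_n)\to 0$ by continuity alone, so that $\ee(\ti{K}(s_n)-1)\to 0$ and the total tends to $-\a(t-s)$ in $[-\infty,0]$.
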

	\begin{proof}
		 The first point is only a restatement of the Point \ref{point:cas_stationnaire} in Remark \ref{remq:commentaire_hyp_markovinification} and we are left with the proof of the second point. We set $L := L^K$ and fix $s<t \in T^2$. Let $(s_n)_{n \geq 1}$ and $R_n^{s,t}$ be as in the statement. For every $n \geq 1$, we write $ R_n^{s,t} := (t_0^n, \dots, t_{m_n}^n) \in \SS_{[s,t]}.$  For every $k \in \{1, \dots, m_n-2\}$, we have \[\ln(\ti{K}(s_n)) = \ln(1 + [\ti{K}(s_n)-1]) = -s_nL(s_n)\left[1+ \ee\left(\ti{K}(s_n)-1\right)\right] = -(t_{k+1}^n - t_k^n)L(s_n)\left[1+ \ee\left(\ti{K}(s_n)-1\right)\right], \] where $\ee$ is defined in Notation \ref{nott:DL_Taylor}. Hence,
		\begin{align*}
			\sum_{k=0}^{m_n-1} \ln\left(K(t_{k}^n,t_{k+1}^n)\right) &= \sum_{k=0}^{m_n-1} \ln\left(\ti{K}(t_{k+1}^n-t_k^n)\right) \\
			&= \ln(\ti{K}(t_1^n-s)) + \ln(\ti{K}(t-t_{m_n-1}^n)) + \sum_{k=1}^{m_n-2} \ln(\ti{K}(s_n)) \\
			&= \ln(\ti{K}(t_1^n-s)) + \ln(\ti{K}(t-t_{m_n-1}^n)) - \sum_{k=1}^{m_n-2}(t_{k+1}^n - t_k^n)L(s_n)\left[1+ \ee\left(\ti{K}(s_n)-1\right)\right] \\
			&= \ln(\ti{K}(t_1^n-s)) + \ln(\ti{K}(t-t_{m_n-1}^n)) - (t_{m_n-1}^n-t_1^n) L(s_n)\left[1+ \ee\left(\ti{K}(s_n)-1\right)\right].\\
		\end{align*}
		Since $\ti{K}(0) = 1$ and $K$ is continuous, we obtain $ \lim_{n \to +\infty} \sum_{k=0}^{m_n-1} \ln\left(K(t_{k}^n,t_{k+1}^n)\right) = 0 + 0 - \a (t-s) = -\a(t-s)$, which implies the result according to Proposition \ref{pro:formulation_calculatoire}.
	\end{proof}
	We now apply Theorem \ref{them:Markov_stationnaire} to two different examples: We shall prove that they admit a strong local Markov transform and then compute it. We begin with the fractional Brownian motion.
	\begin{pro}
		Consider $H \in ]0,1[$ and denote by $\BB_H$ the fractional Brownian motion with Hurst parameter $H$, \ie ,\ the centered Gaussian measure with covariance function $K_H : (s,t) \in \R_+^* \times \R_+^* \mapsto \frac{1}{2}(|t|^{2H} + |s|^{2H} - |t-s|^{2H})$. The centered Gaussian measure $\BB_H'$ with covariance matrix \[K_{H}' : (s,t) \in \R_+^* \times \R_+^*  \mapsto 
		\begin{cases}
			s^{H}t^H\mathds{1}_{s = t} + 0\mathds{1}_{s \neq t} & \text{if } H < 1/2  \\ \min(s,t) &\text{if } H=1/2 \\   s^{H}t^{H}& \text{if } H > 1/2 
		\end{cases} \] is the strong local Markov transform of $\BB_H.$
	\end{pro}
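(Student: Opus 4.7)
The plan is to compute the instantaneous decorrelation rate $\alpha^{K_H}$ of fractional Brownian motion, verify the uniform convergence hypothesis on each compact interval $[s,t]\subset \R_+^*$, and then invoke Theorem~\ref{them:Markov_stationnaire} in the non-singular extension given by Remark~\ref{remq:commentaire_hyp_markovinification}(1). This extension asserts that the strong local Markov transform of $\BB_H$ is the centered Gaussian measure with covariance $(s,t)\mapsto s^H t^H\exp\!\bigl(-\int_s^t \alpha^{K_H}(u)\,du\bigr)$, so the task reduces to identifying this expression with $K_H'$ in each of the three regimes of $H$.

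A direct computation from $K_H(v,v+h)=\tfrac{1}{2}(v^{2H}+(v+h)^{2H}-h^{2H})$ and $K_H(v,v)=v^{2H}$ yields
\[
L^{K_H}_v(h) \;=\; \frac{h^{2H-1}}{2v^H(v+h)^H} \;-\; \frac{\bigl((v+h)^H-v^H\bigr)^2}{2h\,v^H(v+h)^H}.
\]
For $v\in[s,t]$ with $s>0$, the mean value theorem applied to $x\mapsto x^H$ (with $H<1$, so that $x^{H-1}$ is decreasing) gives $|(v+h)^H-v^H|\leq H s^{H-1} h$ uniformly, so the subtracted term is bounded by $H^2 h/(2 s^{2})=O(h)$ and tends to $0$ uniformly. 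The asymptotic behaviour of $L^{K_H}_v(h)$ is thus governed by the first term $h^{2H-1}/(2 v^H(v+h)^H)$, and the three regimes of $H$ split naturally.

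If $H>1/2$, then $h^{2H-1}\to 0$, and both terms vanish uniformly on $[s,t]$; hypothesis~\eqref{eq:conv_fini} holds with $\alpha\equiv 0$, and Theorem~\ref{them:Markov_stationnaire}(1) yields the covariance $s^H t^H$. If $H<1/2$, then $h\mapsto h^{2H-1}$ is decreasing, whence
\[
\inf_{(v,h)\in C_{s,t}(h^*)} L^{K_H}_v(h) \;\geq\; \frac{(h^*)^{2H-1}}{2(t+h^*)^{2H}}\;-\;\frac{H^2 h^*}{2s^{2}}\;\xrightarrow[h^*\to 0^+]{}\; +\infty;
\]
hypothesis~\eqref{eq:conv_infini} holds and Theorem~\ref{them:Markov_stationnaire}(2) gives the covariance $s^H t^H \1_{s=t}$. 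If $H=1/2$, Brownian motion is Markov (Proposition~\ref{pro:criteria_gauss_Markov} applied to $\min(s,t)\min(t,u)/t=\min(s,u)$ for $s<t<u$), so Remark~\ref{remp:uniqueness_markov_transform}(3) directly gives $\BB_{1/2}$ as its own strong local Markov transform, with covariance $\min(s,t)$; equivalently, Theorem~\ref{them:Markov_stationnaire}(1) applies with $\alpha(v)=\tfrac{1}{2v}$, and $s^{1/2}t^{1/2}\exp\!\bigl(-\tfrac{1}{2}\int_s^t u^{-1}du\bigr)=\min(s,t)$.

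The main technical step is the uniform control of $L^{K_H}_v(h)$ on the compact set $[s,t]$, and in particular, for $H<1/2$, the uniform lower bound on the leading $h^{2H-1}$ term. Both estimates rely crucially on the restriction $s>0$ (built into $T=\R_+^*$) to keep the denominators $v^H(v+h)^H$ bounded away from zero. Once these elementary estimates are in hand, the conclusion follows immediately from Theorem~\ref{them:Markov_stationnaire} combined with Remark~\ref{remq:commentaire_hyp_markovinification}(1).
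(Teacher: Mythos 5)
Your proof is correct, but it follows a genuinely different route from the paper's. The paper's proof rests on the Lamperti-type identity $K_H(s,t)=s^Ht^H\,\ti{K}(\phi(t)-\phi(s))$ with $\phi=\tfrac12\ln$ and $\ti{K}(x)=\tfrac12\left(e^{2Hx}+e^{-2Hx}-|e^x-e^{-x}|^{2H}\right)$: it reduces to a \emph{stationary} kernel, computes the single limit $\lim_{h\to0^+}h^{-1}(1-\ti{K}(h))\in\{+\infty,1,0\}$, applies Theorem \ref{them:Markov_stationnaire_bis}, and transports the conclusion back through Proposition \ref{pro:transformation} and Remark \ref{remq:iden_mark_trans_def}. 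You instead attack $K_H$ directly in its original parametrization, computing $L^{K_H}_v(h)=\frac{h^{2H-1}}{2v^H(v+h)^H}-\frac{((v+h)^H-v^H)^2}{2h\,v^H(v+h)^H}$ and verifying the uniform hypotheses \eqref{eq:conv_fini} or \eqref{eq:conv_infini} of Theorem \ref{them:Markov_stationnaire} by hand (via Remark \ref{remq:commentaire_hyp_markovinification} to dispense with the unit-variance normalization). The trade-off is clear: the stationary reduction makes uniformity in $v$ automatic and the limit computation one-dimensional, whereas your direct approach must supply uniform-in-$v$ estimates on compacts of $\R_+^*$ (which your mean value theorem bound $|(v+h)^H-v^H|\le Hs^{H-1}h$ does correctly), but it avoids the time-change machinery entirely and exhibits the instantaneous decorrelation rate in the original time variable, namely $\a^{K_H}\equiv 0$ for $H>1/2$, $\a^{K_H}(v)=\tfrac1{2v}$ for $H=1/2$, and the degenerate rate $+\infty$ for $H<1/2$; the two answers agree since $\exp\left(-\tfrac12\int_s^t u^{-1}\,du\right)=(s/t)^{1/2}=\exp\left(-|\phi(t)-\phi(s)|\right)$. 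Both arguments yield $K_H'$ in all three regimes.
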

	\begin{proof}
		For $(s,t) \in \R_+^* \times \R_+^*$, 
		\begin{equation}\label{eq:mbf_is_sta}
			K_H(s,t) = s^{H}t^{H} \cdot \frac{1}{2}\left( \left( \frac{t}{s} \right)^{H} + \left( \frac{s}{t} \right)^{H} - \left| \left( \frac{t}{s} \right)^{1/2} - \left( \frac{s}{t} \right)^{1/2} \right|^{2H} \right) 
			= u(s)u(t) \ti{K}(\phi(t)-\phi(s)),
		\end{equation}
		where $u : t \in \R_+^* \mapsto t^{H} \in \R$, $\phi : s \in \R^*_+ \mapsto \frac{\ln(s)}{2}$ and $\ti{K} : x \mapsto \frac{1}{2} \left( e^{2H x} + e^{-2H x} - |e^x - e^{-x}|^{2H} \right).$ Let define $P$ as the stationary centered Gaussian process with covariance function $K :(s,t) \mapsto \ti{K}(|t-s|).$ Using the Taylor expansion of the exponential function, we get
		\begin{equation*}
			h^{-1}(1-\ti{K}(h) ) = o(1) + (2h)^{2H - 1}[(1+ o(1))^{2H}  \dcv{h \to 0^+} \begin{cases} +\infty &\text{if } H < 1/2  \\1  &\text{if }  H= 1/2 \\  0 &\text{if }H > 1/2 \end{cases} =: \b.
		\end{equation*}	
		Thus, according to Theorem \ref{them:Markov_stationnaire_bis}, $P_\b$ is the strong local Markov transform of $P$. According to Remark Equation \eqref{eq:mbf_is_sta} and Remark \ref{remq:iden_mark_trans_def}, the centered Gaussian process with kernel  $(s,t) \mapsto u(s)u(t) \exp(-\b|\phi(t)-\phi(s)|)$ is the strong local Markov transform of $\BB_H$.  Finally, it is straightforward that $u(s)u(t) \exp(-\b|\phi(t)-\phi(s)|) = K'_H(s,t)$, which gives the wanted result.
	\end{proof}
	
	We now continue our sequence of examples with a class of non-stationary processes.
	
	\begin{pro}
		Consider two intervals $T \subset \R$, $J \subset \R_+^*$, a Brownian motion $(B_u)_{u \geq 0}$ on a probability space $(\Omega,\mathcal{F},\mathbb{P})$ and a family $(k_t)_{t \in T}$ of elements of $L^2(J)$ such that $\int_{J} k_t(u)^2du = 1$  for every $t \in T$. Set $X_t := \int_J k_t(u) dB_u$ for every $t \in T.$
		The, the law $P \in \PP(\R^T)$ of $(X_t)_{t \in T}$ is a Gaussian measure with covariance function $K : (s,t) \in T \times T \mapsto \int_J k_s(u)k_t(u) du$. Moreover, set $C_{s,t}(h^*) := \ens{(v,h) \in [s,t[ \times ]0,h^*]}{v+h \in [s,t]}$ for every $(s,t,h*) \in T \times T \times \R_+^*$ and assume:
		\vspace{0.2cm}
		\begin{description}\label{pro:mark_bruit_gaussien}
			\leftskip=0.3in
			\item[$(H_1)$ ]  $\exists f : T \times J \to \R, \forall (s,t,u) \in T  \times T \times J$,  
			\begin{equation}
				\sup_{(v,h) \in C_{s,t}(h^*)} |h^{-1}(k_v(u)- k_{v+h}(u)) - f(v,u)| \dcv{ h^* \to 0^+} 0 \ ;
			\end{equation} 
			\item[$(H_2)$ ]   $\exists g \in L^1(J), \exists h_0 \in \R_+^*, \forall (h,t,u) \in ]0,h_0] \times T \times J,\left| h^{-1}(k_{t}(u)-k_{t+h}(u)) - f(t,u) \right| \leq g(u)$;
			\item[$(H_3)$ ]  $\a : t \in T \mapsto \int_J k_t(u)f(t,u)du$ is continuous.
		\end{description}
		\leftskip=0in
		Then $P_\a$ is the strong Markov transform of $P.$
	\end{pro}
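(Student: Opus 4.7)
The plan is to reduce the problem to an application of Theorem \ref{them:Markov_stationnaire}, Point 1, to the kernel $K(s,t)=\int_J k_s(u)k_t(u)\,du$. First, $P$ is centered Gaussian with covariance $K$: any finite-dimensional marginal $(X_{t_1},\dots,X_{t_n})$ is a Gaussian vector by the standard theory of the Wiener integral (as a linear functional of $B$), and the covariance formula is the Itô isometry. The normalization $\int_J k_t^2\,du=1$ gives $v_K(t)=K(t,t)=1$ for every $t\in T$, so $K$ has constant variance function equal to $1$, matching the setting of Theorem \ref{them:Markov_stationnaire}.

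Since $K(v,v)=K(v+h,v+h)=1$, the correlation function of $K$ coincides with $K$ itself, and a direct manipulation yields
$$L_v^K(h)\;=\;\frac{1-K(v,v+h)}{h}\;=\;\int_J k_v(u)\cdot\frac{k_v(u)-k_{v+h}(u)}{h}\,du.$$
Subtracting $\a(v)=\int_J k_v(u)f(v,u)\,du$ gives
$$L_v^K(h)-\a(v)\;=\;\int_J k_v(u)\left(\frac{k_v(u)-k_{v+h}(u)}{h}-f(v,u)\right)du,$$
so the task reduces to showing, for every $s<t$ in $T^2$, the uniform convergence
$$\sup_{(v,h)\in C_{s,t}(h^*)}\bigl|L_v^K(h)-\a(v)\bigr|\;\xrightarrow[h^*\to 0^+]{}\;0,$$
which is precisely hypothesis \eqref{eq:conv_fini} of Theorem \ref{them:Markov_stationnaire}.

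The heart of the proof, and the main obstacle, is the above uniform estimate. Hypothesis $(H_1)$ furnishes, for each fixed $u\in J$, the convergence to $0$ of $h^{-1}(k_v(u)-k_{v+h}(u))-f(v,u)$ uniformly in $(v,h)\in C_{s,t}(h^*)$ as $h^*\to 0^+$; hypothesis $(H_2)$ supplies an $L^1(J)$ majorant $g$ of this quantity valid for $h\le h_0$. The plan is to pass to the limit under the integral over $du$ by a dominated-convergence argument, controlling the factor $|k_v(u)|$ via $\|k_v\|_{L^2(J)}=1$ combined with an application of Cauchy--Schwarz or a splitting argument against $g$, so as to obtain uniformity in $v\in[s,t[$. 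Continuity of $K$ on $T\times T$ follows along the way from the same $(H_1)$--$(H_2)$ combination, and continuity of $\a$ is exactly $(H_3)$. All hypotheses of Theorem \ref{them:Markov_stationnaire}, Point 1, being verified, we conclude that $P_\a$ is the strong local Markov transform of $P$.
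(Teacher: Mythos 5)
Your proposal follows the paper's proof essentially verbatim: identify $P$ as centered Gaussian with covariance $K$ via the It\^o isometry, observe $K(t,t)=1$, write $L_v^K(h)-\a(v)=\int_J k_v(u)\bigl(h^{-1}(k_v(u)-k_{v+h}(u))-f(v,u)\bigr)du$, and deduce hypothesis \eqref{eq:conv_fini} of Theorem \ref{them:Markov_stationnaire} by dominated convergence from $(H_1)$--$(H_2)$, with $(H_3)$ supplying continuity of $\a$. The only difference is cosmetic: where you invoke Cauchy--Schwarz or a splitting against $g$ to handle the factor $|k_v(u)|$, the paper simply bounds it by $\sup_{w\in[s,t]}|k_w(u)|$ before applying Lebesgue's theorem to $\int_J\sup_{(w,h)\in C_{s,t}(h^*)}\bigl|h^{-1}(k_w(u)-k_{w+h}(u))-f(w,u)\bigr|\,du$.
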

	
	\begin{proof}
		The hypothesis $k_s \in L^2(J)$ ensures that $(X_t)_{t \in T}$ is well defined. The process $P$ is clearly centered, Gaussian and for every $(s,t) \in T^2$, \[K(s,t) = \E(X_sX_t) = \E\left(\int_J k_s(u)dB_u \int_J k_t(u)dB_u\right) = \E \left( \int_J k_s(u)k_t(u) d \textlangle B , B \textrangle_u \right)= \int_J k_s(u)k_t(u) du.\] Moreover, for  $(v,h) \in C_{s,t}(h^*)$, $L^K_v(h) = h^{-1}\left(1-K(v,v+h) \right) = \int_J h^{-1} (k_v(u) - k_{v+h}(u))k_v(u) du.$  Hence,   
		\begin{align*}
			|L^K_v(h)-\a(v)| &= \left| \int_J h^{-1}\left(k_v(u)-k_{v+h}(u)\right) k_v(u) du - \int_J f(v,u) k_v(u)du \right| \\
			&\leq \int_J  \left|  h^{-1}(k_v(u)-k_{v+h}(u)) -  f_v(u)\right| |k_v(u) |du \\
			&\leq  \left( \sup_{w \in [s,t]}  |k_w(u) |\right) \int_J \sup_{(w,h) \in C_{s,t}(h^*)} \left\{ \left| h^{-1}(k_w(u)-k_{w+h}(u)) - f(w,u) \right| \right\}   du,
		\end{align*} 
		which implies 
		\begin{equation}\label{eq:majoration_Lebesgue}
			\sup_{(v,h) \in C_{s,t}(h^*)} |L^K_v(h)-\a(v)| \leq \left( \sup_{w \in [s,t]}  |k_w(u) | \right)\int_J \sup_{(w,h) \in C_{s,t}(h^*)} \left\{ \left| h^{-1}(k_w(u)-k_{w+h}(u)) - f(w,u) \right| \right\}  du.
		\end{equation} 
		According to $(H_1)$ and $(H_2)$, the Lebesgue's dominated convergence theorem applies to the right-hand side of Equation \eqref{eq:majoration_Lebesgue} so that $\lim_{h^* \to 0^+} \sup_{(v,h) \in C{s,t}(h^*)} |L^K_v(h)-\a(v)| = 0$. According to $(H_3)$, $\a$ is continuous and we have $K(t,t) = \int_{J} k_t(u)^2 du =1$. Hence, Theorem \ref{them:Markov_stationnaire} applies, which proves the result.
	\end{proof}
	
	\begin{ex}
		Let us apply Proposition \ref{pro:mark_bruit_gaussien} to the law $P$ of  $(X_s)_{s > 0} := \left( \int_0^{+\infty} \sqrt{s}e^{-\frac{su}{2}} dB_u \right)_{s>0}$. In order to satisfy $(H_2)$, we rather work on $(X_t)_{t \in [a,b]}$ for a fixed couple $a<b \in \left(\R_+^* \right)^2.$  Fix $s<t \in [a,b]^2$ and set $k_v(u) := \sqrt{v} e^{-vu/2}$ for every $(v,u) \in [s,t] \times \R_+^*$. Using the Taylor expansion of the exponential map and  of $x \in ]-1,+\infty[ \mapsto \sqrt{1+x}$, we get
		\begin{align*}
			h^{-1}(k_v(u)-k_{v+h}(u)) &= k_v(u) \left[h^{-1} \left( 1- \sqrt{1+\frac{h}{v}} e^{\frac{-hu}{2}}\right) \right]\\
			&= \frac{1}{2} k_v(u)\left( u- \frac{1}{v} +\theta(h) \right),
		\end{align*}  where $\theta : \R \to \R$ satisfies $\lim_{h \to 0^+} \theta(h) = 0.$ Setting $f(v,u) := \frac{1}{2}k_v(u) \left(u-1/v \right) $, we get 
	\begin{equation*}
		\left|  h^{-1}(k_v(u)-k_{v+h}(u)) - f(v,u) \right| = \left|\frac{1}{2} k_v(u) \theta(h)\right| \leq \frac{\sqrt{t}}{2} |\theta(h)|,
	\end{equation*}
	for every $(v,u,h) \in [s,t] \times \R_+ \times \R_+^*$. Hence, Hypothesis $(H_1)$ is satisfied. Let $h_0 \in \R_+^*$ be  such that $\sup_{ h \in ]0,h_0]}{|\theta(h)|} \leq 1.$ For every $h \in ]0,h_0]$ and $(v,u) \in [a,b] \times \R_+^*$, we have \begin{equation*}
		\left|h^{-1}(k_v(u)-k_{v+h}(u))-f(v,u)  \right| = \left|\frac{1}{2} k_v(u) \theta(h)\right| \leq \frac{1}{2}\sqrt{b} e^{-\frac{au}{2}}=: g(u),
	\end{equation*}
	which shows $(H_2).$  Since $f(v,u)k_v(u) =  \frac{1}{2}(u ve^{-vu}-e^{-vu} )$, we have  $\a(v) = \int_0^{+\infty} f(v,u) k_v(u) du = \int_{0}^{+\infty} (uve^{-vu} - e^{-vu} ) du = 1-1= 0$ for every $v \in [a,b]$. Since $(H_3)$ is obviously true, Proposition \ref{pro:mark_bruit_gaussien} applies and the Markov transform of the law of $(X_t)_{t \in [a,b]}$ is the law of the completely correlated process $(Z)_{t \in [a,b]}$, where $Z \sim \NN(0,1).$ This being true for every $a<b \in \left(\R_+^* \right)^2$, the Markov transform of the law of $(X_t)_{t \in t>0}$ is  $(Z)_{t >0}$. Applying Remark \ref{remq:centered_enough}, the strong local Markov transform of the law of $\left( \int_0^{+ \infty} e^{-tu}dB_u \right)_{t>0}$ is the law of $\left(Z/\sqrt{2t} \right)_{t>0}.$
	\end{ex}

	\section{Default of uniqueness for a weak Markov transform.}\label{sec:counterexample}
	As already noticed in Remark \ref{remp:uniqueness_markov_transform}, strong local Markov transforms are unique. However, we shall see now that this fails in the case of weak local Markov transforms. In this section, our aim is to construct a (stationary) Gaussian measure which has several weak local Markov transforms. The guiding result for our construction is a theorem of Bochner which characterizes the continuous positive semi-definite stationary kernels. For a proof, we refer to \cite[Paragraph $8$]{bochner_monotone_1933}  (or \cite[Page $208$]{gihman_theory_1974} for a more recent presentation). We denote by $\hat{\mu} : t \in \R \mapsto \int_{\R} \exp(itx) \mu(dx)$ the Fourier transform of a positive finite measure $\mu \in \MM_+(\R).$
	\begin{them}[Bochner\footnote{In general, Theorem \ref{them:Bochner} is stated for $\mathbb{C}$-valued kernels, but it is straightforward that this implies our reformulation with $\R$-valued kernels.}]\label{them:Bochner}
		Let $K : \R^2 \to \R$ be a symmetric map. The following conditions are equivalent: 
		\begin{enumerate}
			\item There exists $\mu \in \MM_+(\R)$ such that $K(s,t) = \hat{\mu}(t-s)$ for every $(s,t) \in \R^2.$
			\item The function $K$ is a continuous positive semi-definite stationary kernel.
		\end{enumerate}
		The measure $\mu$ is then unique and is called the spectral measure associated with $K$.
	\end{them}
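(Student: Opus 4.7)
For the easy direction $(1) \Rightarrow (2)$, I would argue directly. Given $\mu \in \MM_+(\R)$ with $K(s,t) = \hat\mu(t-s)$, continuity of $K$ follows from dominated convergence (the integrand $e^{i(t-s)x}$ being bounded by $1$ and $\mu$ finite), stationarity is built into the definition, and real-valuedness forces $\mu$ to be symmetric so that $K(s,t) = \int \cos((t-s)x)\,\mu(dx)$, which is manifestly symmetric in $(s,t)$. The positive semi-definite property follows from the identity
\begin{equation*}
\sum_{j,k=1}^n c_j c_k\, \hat\mu(t_k - t_j) \;=\; \int_{\R} \sum_{j,k} c_j c_k\, e^{ix(t_k - t_j)}\, \mu(dx) \;=\; \int_{\R} \Big|\sum_{j=1}^n c_j\, e^{ixt_j}\Big|^2 \mu(dx) \;\geq\; 0.
\end{equation*}

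For the harder direction $(2) \Rightarrow (1)$, the plan is to construct $\mu$ as a weak limit of absolutely continuous measures obtained by mollification. Writing $\tilde K(u) := K(0,u)$, for every $\sigma > 0$ the Gaussian damping $u \mapsto \tilde K(u)\, e^{-\sigma u^2/2}$ is integrable, so I can define
\begin{equation*}
g_\sigma(x) \;:=\; \frac{1}{2\pi} \int_{\R} e^{-ixu}\, \tilde K(u)\, e^{-\sigma u^2/2}\, du.
\end{equation*}
The first key step is to prove $g_\sigma \geq 0$. I would do this by recognizing $g_\sigma(x)$ as a limit of Riemann sums of the form $\frac{1}{2\pi}\sum_{j,k} \tilde K(u_j - u_k)\, c_j\, c_k\, \Delta u^2$ with $c_j := e^{-ixu_j}\,e^{-\sigma u_j^2/4}$, and invoking the positive semi-definiteness of $K$ on each finite grid before passing to the limit.

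Next I would set $\mu_\sigma(dx) := g_\sigma(x)\,dx$ and check that Fourier inversion gives back $\widehat{g_\sigma}(t) = \tilde K(t)\, e^{-\sigma t^2/2}$; in particular $\mu_\sigma(\R) = \widehat{g_\sigma}(0) = \tilde K(0)$, so the family $(\mu_\sigma)_{\sigma > 0}$ has bounded mass. Tightness is the main technical obstacle: one classically obtains it from an estimate of the form $\mu_\sigma(\{|x|>A\}) \leq C \sup_{|h|\leq 1/A}(\tilde K(0) - \tilde K(h))$, which converts continuity of $\tilde K$ at $0$ into a uniform tail bound. Once tightness is established, Prokhorov's theorem provides a weak cluster point $\mu$, and passing $\widehat{g_\sigma}(t) = \tilde K(t)\, e^{-\sigma t^2/2}$ to the limit $\sigma \to 0^+$ (using weak convergence against the continuous bounded test functions $x \mapsto e^{itx}$) yields $\hat\mu(t) = \tilde K(t)$.

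Uniqueness of $\mu$ would follow from injectivity of the Fourier transform on $\MM_+(\R)$: if $\hat\mu = \hat\nu$, then $\int \hat\phi\, d\mu = \int \hat\phi\, d\nu$ for every $\phi \in \mathcal{S}(\R)$ by Fubini, hence $\int \psi\, d\mu = \int \psi\, d\nu$ for every Schwartz $\psi$ via Fourier inversion on $\mathcal{S}(\R)$, and a density argument concludes. The principal difficulty is clearly the tightness estimate: the Gaussian damping must be chosen so that $g_\sigma$ remains non-negative while carrying enough information about $\tilde K$ to control tails uniformly in $\sigma$, which is the heart of Bochner's original argument.
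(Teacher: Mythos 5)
The paper does not prove this statement at all: Theorem \ref{them:Bochner} is quoted as a classical result, with the proof delegated to Bochner's original article and to Gihman--Skorokhod. Your proposal therefore cannot be compared to an in-paper argument; judged on its own, it is a correct outline of the standard Gaussian-mollification proof of Bochner's theorem. The easy direction is fine. For the converse, your plan (damp $\tilde{K}$ by $e^{-\sigma u^2/2}$, show the resulting density $g_\sigma$ is non-negative via Riemann sums and positive semi-definiteness, recover $\widehat{g_\sigma}(t)=\tilde{K}(t)e^{-\sigma t^2/2}$, prove tightness, pass to a weak limit) is exactly the classical route. Three points deserve care when writing it out. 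First, in the Riemann sums the correct quadratic form is $\sum_{j,k}\tilde{K}(u_j-u_k)\,c_j\overline{c_k}$, not $c_jc_k$: with your complex weights $c_j=e^{-ixu_j}e^{-\sigma u_j^2/4}$ only the conjugated form produces the factor $e^{-ix(u_j-u_k)}$ that reassembles (after the change of variables $w=u-v$) into a positive multiple of $\int \tilde{K}(w)e^{-ixw}e^{-\sigma' w^2/2}\,dw$; non-negativity of the Hermitian form does follow from the paper's real symmetric positive semi-definite hypothesis. Second, the step ``Fourier inversion gives back $\widehat{g_\sigma}$'' silently uses $g_\sigma\in L^1$; this is where non-negativity is really needed, via the approximate-identity computation $\int g_\sigma(x)e^{-\epsilon x^2/2}\,dx\to \tilde{K}(0)$ combined with monotone convergence as $\epsilon\to 0^+$. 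Third, the tail estimate must be made uniform in $\sigma$: the characteristic function of $\mu_\sigma$ is $\tilde{K}(h)e^{-\sigma h^2/2}$, so the bound splits into a term controlled by continuity of $\tilde{K}$ at $0$ and a term $\tilde{K}(0)(1-e^{-\sigma\delta^2/2})$ that must be absorbed by restricting to small $\sigma$. None of these is a genuine gap, only standard details to fill in; the argument as sketched is sound and is, for practical purposes, the proof the cited references give.
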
	
	We now claim that, in order to obtain a stationary Gaussian measure that has not a unique weak local Markov transform,  it is sufficient to find a symmetric probability measure $\mu \in \PP(\R)$ with a Fourier transform whose growth rate at $0^+$ admits several cluster points. To justify this, notice that if $\mu$ is a real-valued symmetric probability measure, then the function $K : (s,t) \in \R^2 \mapsto \hat{\mu}(t-s)$  is real valued and symmetric. According to Theorem \ref{them:Bochner}, this implies that $K$ is a stationary positive semi-definite kernel, with constant variance equal to $\hat{\mu}(0) = \mu(\R) = 1$.  Thus, for every cluster point $\a \in [0,+\infty]$ of the decay rate of $\hat{\mu}$ at point $0^+$, according to Theorem \ref{them:Markov_stationnaire_bis}, the centered Gaussian process $P$ with covariance function $K$ admits $P_{\a}$ as weak local Markov transform. In our construction, the set of cluster points of the decay rate of the Fourier transform of $\mu$ at point $0^+$ will be $[0,+\infty]$, which guarantees an infinity of weak local Markov transforms.  Denoting by $\emph{\sgn} : \R \to \{-1,0,1\}$ the usual sign function, our strategy is to consider a symmetric probability measure of the form $\g := \sum_{|k| \geq 2 } a_{|k|} \d_{\sgn(k)b_{|k|}}$ whose Fourier transform has an infinite decay rate at $0^+$. Then, to obtain our measure $\mu$, we \enquote{mix} $\g$ with $\d_0$, whose Fourier transform has a decay rate converging to $0$ at point $0^+$. More precisely, we will recursively construct a set $S \subset \left(\N \cap [2,+\infty[ \right)$ and put \[y_k := \begin{cases}
		b_k &\text{if } k \in S \\ 0 &\text{otherwise}	
	\end{cases},\] in order to define our measure $\mu$ by \[\mu := \sum_{|k| \geq 2 } a_{|k|} \d_{\sgn(k)y_{|k|}} = \sum_{|k| \in S} a_{|k|}\d_{\sgn(k)b_{|k|}} + \left( \sum_{|k| \notin S} a_{|k|} \right) \d_0.\] Since 
	\begin{equation}\label{eq:trans_fourier_mu_gamma}
		\begin{cases}\hat{\g}(t) = \sum_{k \geq 2} a_k(e^{itb_k} + e^{-it b_k}) = 2 \sum_{ k \geq 2} a_k \cos(tb_k)\\ \hat{\mu}(t) = \sum_{k \geq 2} a_k(e^{ity_k} + e^{-it y_k}) = 2 \sum_{ k \geq 2} a_k \cos(ty_k) \end{cases},
	\end{equation}
	 the decay rates of $\hat{\g}$ and $\hat{\mu}$ at $0$ are given by \begin{equation}\label{eq:calcul_slope}
		\begin{cases}
			t^{-1}(1-\hat{\g}(t)) = 2 t^{-1}\sum_{k \geq 2} a_k \left( 1- \cos(t b_k)  \right)\\ t^{-1}(1-\hat{\mu}(t)) = 2 t^{-1}\sum_{k \in S} a_k \left( 1- \cos(t b_k)  \right)
		\end{cases}.
	\end{equation} 
		Put $a := 1/2$, $b := 3$ and $a_k := a^k$, $b_k := b^k$ for  every $k \geq 2.$ We have $ab>1$ and \eqref{eq:trans_fourier_mu_gamma} shows that $\hat{\g}/2$ is the well-known continuous nowhere differentiable Weierstrass function \cite{weierstras_uber_1872}. In Point \ref{point:cv_derivee_Fourier} of Lemma \ref{lem:va_fourier_preli} below, we rely on an article of Hardy \cite{hardy_weierstrasss_1916} to prove  $\lim_{t \to 0^+} t^{-1}(1-\hat{\g}(t)) = + \infty$. We are left to find a recursive construction of $S$ such that the lacunary series $t^{-1}(1-\hat{\mu}(t))$ of $t^{-1}(1-\hat{\g}(t))$  admits the elements of $[0,+\infty]$ as cluster points.  Points \ref{point:cv_derivee_Fourier_queue}-\ref{point:cv_derivee_Fourier_modif} of Lemma \ref{lem:va_fourier_preli} are useful to define the sequence $(y_k)_{k \geq 1}$ and to show that the resulting measure $\mu$ has the wanted property. The construction itself is done in Proposition \ref{pro:va_fourier}, using the tools of Lemma \ref{lem:va_fourier_preli}.
	\begin{lem}\label{lem:va_fourier_preli}
		Put $a := 1/2$, $b := 3$ and $a_k := a^k$, $b_k := b^k$ for every $k \geq 2.$ 
		\begin{enumerate}
			\item Then $\lim_{x \to + \infty}	x \sum_{k \geq 2} a_k \left(1 - \cos \left( \frac{b_k}{x} \right) \right) =  +\infty.$
			\item\label{point:cv_derivee_Fourier_queue} For any given sequence $(x_k)_{k \geq 1} \in \R^{\N^*}$, $\lim_{x \to +\infty} x \sum_{k \geq \lfloor x \rfloor +1} a_k \left(1 - \cos \left( \frac{x_k}{x} \right) \right) = 0.$
			\item\label{point:cv_derivee_Fourier_milieu} For $p \geq 1$ and $((n_i,m_i))_{i \in \ent{1}{p}} \in {\left(\N^2\right)}^p,$ the map \[g_{(n_1,m_1),\dots,(n_p,m_p)} : x \in \R_+^*  \mapsto x \sum_{l=1}^p \sum_{k=n_l}^{m_l} a_k \left( 1-\cos\left(\frac{b_k}{x}\right) \right) \in \R_+\] satisfies  $\lim_{x \to + \infty} g_{(n_1,m_1),\dots,(n_p,m_p)}(x) = 0.$
			\item\label{point:cv_derivee_Fourier_finale} For  $n \geq 2$, the map  \[f_n : x \in \R \mapsto x\sum_{k = n}^{\lfloor x \rfloor } a_k \left( 1 - \cos \left( \frac{b_k}{x} \right) \right) \in \R_+\] satisfies $ \lim_{x \to + \infty} f_n(x) =  +\infty.$
			\item\label{point:cv_derivee_Fourier_modif} Consider a increasing sequence of integers $(n_k)_{k \geq 1}$ such that $n_0 = 2$ and define the sequence $(y_k)_{k \geq 0}$ by \[ y_k = \begin{cases} b_k & \text{if } k \in \cup_{i \geq 0} \llbracket{n_{2i}},{ n_{2i+1}}\llbracket \\ 0 &\text{if } k \in \cup_{i \geq 0} \llbracket n_{2i+1}, n_{2(i+1)} \llbracket\end{cases}.\] Then the map \[f : x  \in \R_+^* \mapsto x\sum_{k=2}^{ \lfloor x \rfloor } a_k \left( 1 - \cos \left( \frac{y_k}{x} \right) \right)\] satisfies \[f(x) =  \begin{cases} g_{(n_0,n_1-1),\dots,(n_{2(i-1)},n_{2i-1}-1)}(x) + f_{n_{2i}}(x) & \text{if }  x \in [n_{2i},n_{2i+1}[ \\ g_{(n_0,n_1-1),\dots,(n_{2i},n_{2i+1}-1)}(x) &\text{if } x \in [n_{2i+1},n_{2(i+1)}[ \end{cases}.\]
		\end{enumerate}
	\end{lem}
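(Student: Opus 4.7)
The five points split cleanly into one analytic core (Point~1) and four elementary bookkeeping statements, so the plan is to isolate the hard work.

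\medskip\noindent\textbf{Point~1 (the main obstacle).} Setting $t = 1/x$, one has $x\sum_{k\geq 2}a_k(1-\cos(b_k/x)) = t^{-1}\sum_{k\geq 2}a_k(1-\cos(b_k t))$, which, up to a constant, is the difference quotient at $0$ of the Weierstrass-type series $W(t) := \sum_{k\geq 2}a^k\cos(b^k t)$. With $a=1/2$, $b=3$ one has $ab = 3/2 > 1$, so I would invoke the sharp result of Hardy~\cite{hardy_weierstrasss_1916} which shows that when $ab>1$ the function $W$ has infinite upper Dini derivative at every point; specialising this at $0^+$ yields exactly the claimed divergence. The whole strategy for the construction of the counter-example measure really relies on this step, so this is where the paper pays its analytic debt.

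\medskip\noindent\textbf{Points~2 and 3 (elementary tails and heads).} For Point~2, the crude bound $|1-\cos u|\leq 2$ gives
\[
x\sum_{k\geq \lfloor x\rfloor+1}a_k\bigl(1-\cos(x_k/x)\bigr)\;\leq\;2x\sum_{k\geq\lfloor x\rfloor+1}2^{-k} \;=\;4x\cdot 2^{-\lfloor x\rfloor}\;\xrightarrow[x\to+\infty]{}\;0,
\]
uniformly in the sequence $(x_k)$. For Point~3 the sum runs over the fixed finite set $F:=\bigcup_{l=1}^p\ent{n_l}{m_l}$, and for large $x$ each $b_k/x$ is small, so by $1-\cos u\leq u^2/2$,
\[
g_{(n_1,m_1),\dots,(n_p,m_p)}(x)\;\leq\;\frac{1}{2x}\sum_{k\in F}a_kb_k^2,
\]
which tends to $0$.

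\medskip\noindent\textbf{Point~4 (combine).} I would write
\[
f_n(x)\;=\;x\sum_{k\geq 2}a_k\bigl(1-\cos(b_k/x)\bigr)\;-\;x\sum_{k=2}^{n-1}a_k\bigl(1-\cos(b_k/x)\bigr)\;-\;x\sum_{k\geq\lfloor x\rfloor+1}a_k\bigl(1-\cos(b_k/x)\bigr).
\]
The first term tends to $+\infty$ by Point~1, the second to $0$ by Point~3 applied to the single interval $\ent{2}{n-1}$, and the third to $0$ by Point~2 with $x_k = b_k$. This yields $f_n(x)\to+\infty$.

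\medskip\noindent\textbf{Point~5 (bookkeeping).} For $x \in [n_{2i},n_{2i+1}[$ the indices $k \in \ent{2}{\lfloor x\rfloor}$ split according to the parity of the block they fall in: on even blocks $\ent{n_{2j}}{n_{2j+1}-1}$ (for $j<i$) as well as the tail block $\ent{n_{2i}}{\lfloor x\rfloor}$, we have $y_k = b_k$ and the terms contribute to either $g_{(n_0,n_1-1),\dots,(n_{2(i-1)},n_{2i-1}-1)}(x)$ or to $f_{n_{2i}}(x)$; on odd blocks $\ent{n_{2j+1}}{n_{2(j+1)}-1}$ we have $y_k=0$, so $1-\cos(y_k/x)=0$ and those terms vanish. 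The case $x\in[n_{2i+1},n_{2(i+1)}[$ is analogous with the tail block now of odd type and hence absent from $f$. This is a direct substitution, no analytic input is needed.
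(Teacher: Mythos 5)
Your proposal follows the paper's proof essentially step for step: Point~1 by citation to Hardy, Points~2 and~3 by the bounds $1-\cos u\le 2$ and $1-\cos u\le u^2/2$, Point~4 by the same three-term decomposition, and Point~5 by direct substitution. The one caveat is in Point~1: what is needed is the full limit $\lim_{h\to0^+}h^{-1}\sum_{k}a^k\bigl(1-\cos(b^k\pi h)\bigr)=+\infty$ (which Hardy's points $2.41$--$2.42$ supply via a lower bound of order $h^{\xi}$ with $\xi=\log(1/a)/\log b<1$, all terms of the series being nonnegative), not merely an ``infinite upper Dini derivative at every point,'' which would only control a $\limsup$ and, at $0$, has the wrong sign anyway; once the citation is stated in that precise form (plus the harmless change of variable absorbing the factor $\pi$ and the two dropped terms $k=0,1$), your argument coincides with the paper's.
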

	\begin{proof}
		\begin{enumerate}
			\item\label{point:cv_derivee_Fourier} According to  \cite[Points $2.41$-$2.42$]{hardy_weierstrasss_1916}, $ \lim_{h \to 0^+} h^{-1} \sum_{k \geq 0} a^k (1- \cos(b^k \pi h)) = + \infty$. Since $\lim_{h \to 0^+} h^{-1} a^0 (1-\cos(b^0\pi h) = \lim_{h \to 0^+} h^{-1} a^1 (1-\cos(b^1\pi h) = 0$, the change of variable $x^{-1} = \pi h$ gives the wanted result.
			\item For  $x \geq 2$, 
			$ {\small 0 \leq x \sum_{k \geq \lfloor x \rfloor +1} a_k \left( 1 - \cos\left( \frac{x_k}{x} \right) \right) 
				\leq 2x \sum_{k \geq \lfloor x \rfloor +1} a^k 
				= 2x\frac{a^{\lfloor x \rfloor + 1}}{1-a}} ,$
			which proves the result.
			\item For $x \in \R^*_+$, $ 0 \leq x \sum_{k=1}^p \sum_{l=n_k}^{m_k} a_l \left( 1-\cos\left(\frac{b_l}{x}\right) \right) \leq x \sum_{k=1}^p \sum_{l=n_k}^{m_k} a_l \frac{b_l^2/x^2}{2} = \frac{1}{x} \sum_{k=1}^p \sum_{l=n_k}^{m_k} \frac{a_l b_l^2}{2},$ which proves the result.
			\item According to Points \ref{point:cv_derivee_Fourier}, \ref{point:cv_derivee_Fourier_queue} and \ref{point:cv_derivee_Fourier_milieu}, the equality \[f_n(x)  = x\sum_{k \geq 2}^{\infty} a_k \left(1 - \cos \left( \frac{b_k}{x} \right) \right) - x\sum_{k=2}^{n-1} a_k \left(1 - \cos \left( \frac{b_k}{x} \right) \right) - x \sum_{k \geq \lfloor x \rfloor + 1} a_k \left(1 - \cos \left( \frac{b_k}{x} \right) \right)  \] implies $\lim_{n \to + \infty} f_n(x) = + \infty -0-0 = + \infty.$
			\item The computation is straightforward.
		\end{enumerate}
	\end{proof}

	\begin{pro}\label{pro:va_fourier}
		There exists a sequence $(y_k)_{k \geq 1} \in \R_+^{\N^*}$ such that the cluster points of the decay rate at point $0^+$ of the Fourier transform of $\mu := \sum_{|k| \geq 2 } 2^{-|k|} \d_{\sgn(k)y_{|k|}}$ are the elements of $[0,+\infty]$.
	\end{pro}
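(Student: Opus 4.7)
The plan is to construct the sequence $(n_k)_{k \geq 0}$ recursively so that the cluster points at $+\infty$ of the function $f$ from Point \ref{point:cv_derivee_Fourier_modif} of Lemma \ref{lem:va_fourier_preli} are exactly the elements of $[0,+\infty]$. With the substitution $t = 1/x$, the second equation of \eqref{eq:calcul_slope} rewrites as
\[
t^{-1}(1-\hat{\mu}(t)) = 2x \sum_{k \geq 2} a_k\bigl(1-\cos(y_k/x)\bigr) = 2 f(x) + 2 \tau(x),
\]
where $\tau(x) := x \sum_{k > \lfloor x \rfloor} a_k(1-\cos(y_k/x))$ tends to $0$ by Point \ref{point:cv_derivee_Fourier_queue} of the Lemma. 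Since $f$ is non-negative, showing that the cluster points of $f$ at $+\infty$ form $[0,+\infty]$ will immediately yield the proposition.

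I would fix an enumeration $(q_i)_{i \geq 1}$ of $\Q \cap [0,+\infty)$ in which each rational appears infinitely often, and design the $n_k$'s so that each block $[n_{2i},n_{2i+1})$ contains a point $z_i$ where $|f(z_i)-q_i|$ is small and a point $w_i$ where $f(w_i) \geq i$. Assume $n_0 = 2 < n_1 < \dots < n_{2i-1}$ have been chosen and set $A_i(x) := g_{(n_0, n_1-1), \dots, (n_{2(i-1)}, n_{2i-1}-1)}(x)$, which vanishes at $+\infty$ by Point \ref{point:cv_derivee_Fourier_milieu}. Pick $n_{2i} > n_{2i-1}$ so large that: (a) $|A_i(x)| \leq 1/(4i)$ for all $x \geq n_{2i}$; (b) $f_{n_{2i}}(n_{2i}) = n_{2i}a_{n_{2i}}(1-\cos(b_{n_{2i}}/n_{2i})) \leq 2n_{2i}/2^{n_{2i}} \leq 1/(4i)$; and (c) every integer jump $(m+1) a_{m+1}(1-\cos(b_{m+1}/(m+1))) \leq 2(m+1)/2^{m+1}$ for $m \geq n_{2i}$ is bounded by $1/i$. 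By Point \ref{point:cv_derivee_Fourier_finale}, $\lim_{x \to +\infty} f_{n_{2i}}(x) = +\infty$, so I can pick $M_i > n_{2i}$ with $f_{n_{2i}}(M_i) \geq q_i + i + 1$ and set $n_{2i+1} := \lfloor M_i \rfloor + 2$, ensuring $M_i \in [n_{2i},n_{2i+1})$.

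On $[n_{2i}, n_{2i+1})$, Point \ref{point:cv_derivee_Fourier_modif} gives $f = A_i + f_{n_{2i}}$. Since $A_i$ is continuous in $x$ and the only jumps of $f_{n_{2i}}$ on this interval are the upward ones $(m+1)a_{m+1}(1-\cos(b_{m+1}/(m+1)))$ at integers $m+1$, $f$ is right-continuous on $[n_{2i}, n_{2i+1})$, continuous on each sub-interval $[m, m+1)$, with upward integer jumps of size at most $J := 1/i$. Since $f(n_{2i}) \leq 1/(2i)$ while $f(M_i) \geq q_i + i + 1 - 1/(4i) > \max(q_i,i)$, an intermediate-value argument adapted to upward jumps of size $\leq J$ (at the boundary point $z^* := \inf\{x \in [n_{2i},M_i] : f(x) > q_i\}$, either $z^*$ is not an integer and $f(z^*) = q_i$, or $z^*$ is an integer and $f(z^*) \in [q_i, q_i + J]$) yields $z_i \in [n_{2i}, M_i]$ with $|f(z_i)-q_i| \leq 1/i$ whenever $q_i \geq 1/(2i)$; in the trivial case $q_i < 1/(2i)$ one simply takes $z_i := n_{2i}$. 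Furthermore $w_i := M_i$ satisfies $f(w_i) \geq i$.

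Conclusion: both $z_i$ and $w_i$ tend to $+\infty$. For $\beta \in [0,+\infty)$, the density of $\Q \cap [0,+\infty)$ and the infinite-repetition property of the enumeration yield a subsequence $(i_k)$ with $q_{i_k} \to \beta$, hence $f(z_{i_k}) \to \beta$; for $\beta = +\infty$, $f(w_i) \geq i \to +\infty$. So every $\beta \in [0,+\infty]$ is a cluster point of $f$, as desired. The delicate step is controlling the integer jumps of $f$ inside each block so that the intermediate-value argument effectively bridges small values and large ones; this rests on the decay $m a_m \to 0$ (which bounds the jumps) together with Point \ref{point:cv_derivee_Fourier_milieu} of the Lemma (which makes the contribution of the previously-fixed blocks, encoded by $A_i$, uniformly small on the new block).
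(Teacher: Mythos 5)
Your argument is correct, but it takes a genuinely different route from the paper's. The paper's recursion only arranges for the two \emph{extreme} values: it chooses $n_{2i+1}$ so that $f_{n_{2i}}(n_{2i+1}-1)>i$ and then $n_{2(i+1)}$ so that $g_{(n_0,n_1-1),\dots,(n_{2i},n_{2i+1}-1)}(n_{2(i+1)}-1)<1/i$, which shows that $0$ and $+\infty$ are cluster points of $L:t\mapsto t^{-1}(1-\hat{\mu}(t))$ at $0^+$; it then observes that $L$ is continuous on $(0,+\infty)$ (because $\hat{\mu}$ is) and invokes the classical intermediate value theorem on $L$ \emph{itself} to conclude that every element of $[0,+\infty]$ is a cluster point, non-negativity of $L$ giving the reverse inclusion. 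You never use the continuity of $L$: you target a dense set of prescribed values directly (an enumeration of $\Q\cap[0,+\infty)$ with infinite repetition), and you run the intermediate-value argument on the truncated, discontinuous function $f$, which obliges you to bound the integer jumps by $2(m+1)2^{-(m+1)}$ and to argue at the first crossing point $z^*$. Both arguments are sound; the paper's is shorter because the interpolation between $0$ and $+\infty$ is delegated to the continuity of the Fourier transform, while yours is more self-contained at the level of $f$ and would survive even if one only had the discontinuous truncations in hand. (Incidentally, your identity $t^{-1}(1-\hat{\mu}(t))=2f(x)+2\tau(x)$ with $t=1/x$ is the correct bookkeeping of the factor $2$ coming from \eqref{eq:calcul_slope}, and, as you implicitly use, this factor is harmless since $[0,+\infty]$ is stable under doubling.)
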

	\begin{proof}
		 We define the functions $f_n$ and $g_{(n_1,m_1), \dots , (n_p,m_p)}$ as in Lemma \ref{lem:va_fourier_preli}. According to Points \ref{point:cv_derivee_Fourier_milieu} and \ref{point:cv_derivee_Fourier_finale} of  Lemma \ref{lem:va_fourier_preli}, we can define a sequence $(n_i)_{i \geq 2}$ by \[\accol{n_0 := 2 \\ \forall i \geq 0, n_{2i+1} := \inf \ens{n > n_{2i} }{ f_{n_{2i}}(n-1) > i }\\ \forall i \geq 0, n_{2(i+1)} := \inf \ens{n > n_{2i+1} }{g_{(n_0,n_1-1),\dots,(n_{2i},n_{2i+1}-1)}(n-1) < 1/i}}.\] As in Point \ref{point:cv_derivee_Fourier_modif} of Lemma \ref{lem:va_fourier_preli}, we associate a sequence $(y_k)_{k \geq 2}$ and a function $f$ to our sequence $(n_i)_{i \geq 2}$. Since $n_{2i+1} - 1 \in [n_{2i},n_{2i+1}[$ and $n_{2(i+1)} -1 \in [n_{2i+1}, n_{2(i+1)}[$, according to this same point, 
		$$f(n_{2i+1}-1) = g_{(n_0,n_1-1),\dots ,(n_{2(i-1)},n_{2i-1}-1)}(n_{2i+1}-1) + f_{n_{2i}}(n_{2i+1}-1)\geq f_{n_{2i}}(n_{2i+1}-1) > i$$
		and $$f(n_{2(i+1)}-1) = g_{(n_0,n_1-1),\dots,(n_{2i},n_{2i+1}-1)}(n_{2(i+1)}-1) < 1/i$$ for $i \geq 1$.
		Set $\mu := \sum_{|k| \geq 2 } 2^{-|k|} \d_{\sgn(k)y_{|k|}}$ and denote by $L :t \in \R_+^* \to t^{-1}(1-\hat{\mu}(t))$ the decay rate of $\hat{\mu}$ at $0^+$. As in Equation \eqref{eq:calcul_slope},  \[L(1/x) = 2x\sum_{k \geq 2} a_k \left( 1-\cos(y_k/x)  \right) = f(x) +  g(x),\] where $g(x) := \sum_{k \geq \lfloor x \rfloor + 1} a_k \left( 1- \cos(y_k/x)  \right) $. According to Point \ref{point:cv_derivee_Fourier_queue} of Lemma \ref{lem:va_fourier_preli}, $\lim_{x \to +\infty} g(x) = 0.$ Thus, writing $s_i :=  \frac{1}{n_{2i+1}-1}$ and  $t_i :=  \frac{1}{n_{2(i+1)}-1}$, we get $ L(s_i) = f(n_{2i+1}-1) + g(n_{2i+1}-1) \dcv{ i \to + \infty} + \infty + 0 = +\infty$ and $ L(t_i) =  f(n_{2(i+1)}-1) + g(n_{2(i+1)}-1) \dcv{i \to + \infty} 0$, so that $0 $ and $+\infty$ are cluster points of $L$ at $0^+.$ Since $L$ is continuous, according to the intermediate value theorem, any elements of $[0,+\infty]$ is a cluster points of $L$. Since $L$ is non-negative, all its cluster points are elements of $[0,+\infty]$ which finishes the proof.
	\end{proof}
	
	The following Remark will be used in the construction and further in the article.
	
	\begin{remq}\label{rq:pos_markov}
		Let $T$ be an interval and $P \in \PP\left(\R^T\right)$ a Gaussian measure with non-singular covariance function $K : T \times T \to \R$. In \cite[Theorem $1$]{mehr_certain_1965}, the authors proved that if $P$ satisfies the Markov property and $K$ is continuous, then there exists two functions $f,g$ satisfying $K(s,t) =f(s)g(t)$ for every $s<t \in \R^2$. Since, for every $t \in \R$, $K(t,t) = f(t)g(t) > 0$, both $f$ and $g$ do not vanish. Since $f$ (resp. $g$) is continuous and does not vanish $f$ (resp. $g$) is either positive or negative. As $f(t) g(t) >0$ for every $t \in T$, either $f$ and $g$ are both positive or $f$ and $g$ are both negative. This implies $K$ is positive. Hence, every non-singular continuous covariance function of a Gaussian measure that satisfies the Markov property is positive.
	\end{remq}
	
	We can now construct a stationary Gaussian measure with an infinity of weak local Markov transforms.
	
	\begin{them}\label{them:contre_exemple}
		There exists a centered stationary Gaussian measure $P \in \PP\left(\R^\R\right)$ whose set of weak local Markov transforms is the set of all the measures $P'$ satisfying:
		\begin{enumerate}
			\item The measure $P'$ is centered Gaussian with constant variance function equal to $1$;
			\item The covariance function of $P'$ is non-negative;
			\item The measure $P'$ is Markov.

		\end{enumerate} 
	\end{them}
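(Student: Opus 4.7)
The strategy is to let $P$ be the centered stationary Gaussian measure whose covariance $K(s,t) = \hat{\mu}(t-s)$ is obtained by applying Bochner's Theorem \ref{them:Bochner} to the symmetric probability measure $\mu$ constructed in Proposition \ref{pro:va_fourier}. Since $\mu$ is a real symmetric probability measure, $\hat{\mu}$ is real valued, continuous and satisfies $\hat{\mu}(0)=1$, so $K$ is a continuous stationary positive semi-definite kernel with constant variance equal to $1$. The key feature inherited from Proposition \ref{pro:va_fourier} is that the decay rate $L^K(h) := h^{-1}(1-\hat{\mu}(h))$ is continuous on $(0,+\infty)$ and admits every element of $[0,+\infty]$ as a cluster point at $0^+$.

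For the necessary direction, let $P'$ be any weak local Markov transform of $P$. Then $P'$ is Gaussian by Proposition \ref{pro:Gaussianite_limite}; every one-dimensional marginal equals $\NN(0,1)$ since each composition $P_{\{R\}}^{s,t}$ lies in $\Marg(\NN(0,1),\NN(0,1))$, so $P'$ is centered with constant variance $1$; the Markov property is the definition. Non-negativity of the covariance $K'$ of $P'$ follows from the continuity of $\hat{\mu}$ near $0$: there exists $\d>0$ with $\hat{\mu}(h)>0$ for every $|h|<\d$, hence for any partition $R=(t_0<\cdots<t_m)$ of mesh smaller than $\d$ the product $\prod_k \hat{\mu}(t_{k+1}-t_k)$ is positive, and Proposition \ref{pro:formulation_calculatoire} forces the limit $K'(s,t)\geq 0$.

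For the sufficient direction, let $P'$ satisfy $(1),(2),(3)$ with covariance $K'$. Proposition \ref{pro:criteria_gauss_Markov} applied with $\Sigma_{t,t}=1$ yields $K'(s,u)=K'(s,t)K'(t,u)$ for $s<t<u$, so that $\phi(s,t):=-\ln K'(s,t) \in [0,+\infty]$ is additive, hence of the form $\phi(s,t)=\Phi(t)-\Phi(s)$ for some non-decreasing $\Phi:\R\to\R\cup\{+\infty\}$. Fix $s<t$; in view of Proposition \ref{pro:formulation_calculatoire}, it suffices to construct partitions $(R_n)$ of $[s,t]$ with $\s_{R_n}\to 0$ and $\prod_k \hat{\mu}(h_k^n) \to e^{-(\Phi(t)-\Phi(s))}$. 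For each $n$, subdivide $[s,t]$ into $n$ equal pieces $[u_j,u_{j+1}]$ of length $\ell_n=(t-s)/n$, and set $\a_j^{(n)} := (\Phi(u_{j+1})-\Phi(u_j))/\ell_n$. Using the continuity of $L^K$ and that its cluster set at $0^+$ is $[0,+\infty]$, the intermediate value theorem supplies arbitrarily small $h_j^{(n)}$ with $L^K(h_j^{(n)})=\a_j^{(n)}$ in the finite case (and $L^K(h_j^{(n)})\geq n$ in the infinite case); by replacing $h_j^{(n)}$ by $\ell_n/\lceil \ell_n/h_j^{(n)}\rceil$ one may moreover assume that $h_j^{(n)}$ exactly divides $\ell_n$, which removes boundary leftovers. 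Further subdividing each $[u_j,u_{j+1}]$ into equal parts of length $h_j^{(n)}$ produces a partition $R_n$ with $\s_{R_n}\to 0$, and the Taylor expansion $\ln\hat{\mu}(h) = -hL^K(h)(1+\ee(-hL^K(h)))$ of Notation \ref{nott:DL_Taylor} then leads, summing over $j$, to
\[ \sum_k \ln\hat{\mu}(h_k^n) \;=\; -\sum_j \ell_n L^K(h_j^{(n)})(1+o(1)) \xrightarrow[n\to+\infty]{} -\sum_j (\Phi(u_{j+1})-\Phi(u_j)) \;=\; -(\Phi(t)-\Phi(s)), \]
which is the required convergence.

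The main technical obstacle is the uniform control of the Taylor error $\ee(-h_j^{(n)}L^K(h_j^{(n)}))$ when the local rates $\a_j^{(n)}$ diverge (for instance if $\Phi$ has jumps or regions of very steep growth), since individual $h_j^{(n)}L^K(h_j^{(n)})$ need not go to zero automatically. This is circumvented by picking $h_j^{(n)}$ much smaller than $1/\a_j^{(n)}$ (say $h_j^{(n)} < 1/(n\a_j^{(n)})^2$ in the finite case), which forces $h_j^{(n)}L^K(h_j^{(n)})\to 0$ uniformly in $j$, and by treating the case $\a_j^{(n)}=+\infty$ separately by exploiting that $+\infty$ itself is a cluster point of $L^K$ at $0^+$, so that arbitrarily large values of $L^K$ are available at arbitrarily small scales; this drives the corresponding $\hat{\mu}(h_j^{(n)})^{\ell_n/h_j^{(n)}}$ to $0 = e^{-(\Phi(u_{j+1})-\Phi(u_j))}$ as required.
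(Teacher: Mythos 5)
Your overall skeleton coincides with the paper's: same measure $\mu$ from Proposition \ref{pro:va_fourier}, same kernel $K(s,t)=\hat{\mu}(t-s)$ via Bochner, and the necessary direction is essentially the paper's (your positivity argument for $K'$ --- $\hat{\mu}>0$ near $0$, so all products along partitions of small mesh are positive and the limit is $\geq 0$ --- is a clean alternative to the paper's appeal to Remark \ref{rq:pos_markov}). The divergence is in the sufficient direction, where you do far more work than the statement requires and, in doing so, open genuine gaps. A weak \emph{local} Markov transform only requires, for each \emph{fixed} pair $s<t$, one sequence of partitions of $[s,t]$ along which the product of correlations converges to the single number $K'(s,t)\in[0,1]$. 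It therefore suffices to set $\a:=-|t-s|^{-1}\ln K'(s,t)\in[0,+\infty]$, note that $K'(s,t)=K_\a(s,t)$, and invoke Point \ref{point:Markov_stationnaire_va} of Theorem \ref{them:Markov_stationnaire_bis} with a sequence $s_n\to 0$ such that $L^K(s_n)\to\a$: the uniform grids $R_n=(s_n\Z\cap\,]s,t[)\cup\{s,t\}$ do the job, the two boundary leftovers being absorbed by $\ti{K}(\text{leftover})\to\ti{K}(0)=1$. This is exactly the paper's proof. Your reconstruction of a potential $\Phi$ with $\phi(s,t)=\Phi(t)-\Phi(s)$ and the matching of $n$ local rates $\a_j^{(n)}$ buys nothing here, and it is where the trouble lies.

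Concretely: (i) the representation $\phi(s,t)=\Phi(t)-\Phi(s)$ is ill-defined as soon as $K'$ vanishes somewhere, since $\infty-\infty$ terms appear; (ii) replacing $h_j^{(n)}$ by $\ell_n/\lceil\ell_n/h_j^{(n)}\rceil$ perturbs $h_j^{(n)}$, and you give no control on how much this perturbs $L^K(h_j^{(n)})$ --- the kernel was engineered precisely so that $L^K$ oscillates between $0$ and $+\infty$ on every neighbourhood of $0^+$, so the identity $L^K(h)=\a_j^{(n)}$ is not stable under perturbation of $h$ without a quantitative (e.g.\ H\"older) modulus of continuity for $\hat{\mu}$, which you do not establish; (iii) if you instead keep the exact $h_j^{(n)}$ and accept one leftover subinterval per macro-interval, you accumulate $n\to+\infty$ leftover factors, and their product does not automatically tend to $1$ (each leftover length may fall where $L^K$ is enormous, so $\text{leftover}\cdot L^K(\text{leftover})$ is uncontrolled). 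None of this is fatal to the theorem; all three difficulties evaporate once you collapse to a single constant rate $\a$ per pair $(s,t)$, which is what Theorem \ref{them:Markov_stationnaire_bis} was proved for.
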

	\begin{proof}
		According to Proposition \ref{pro:va_fourier}, we can find a symmetric probability measure $\mu$ such that the cluster points of the decay rate of $\hat{\mu}$ at $0^+$ are the elements of $[0,+\infty]$. Since $\mu$ is symmetric, the function $K : (s,t) \in \R^2 \mapsto \hat{\mu}(t-s)$  is real valued and symmetric. Hence, according to Theorem \ref{them:Bochner}, $K$ is a stationary positive semi-definite kernel, with constant variance equal to $\hat{\mu}(0) = \mu(\R) = 1$. Based on Theorem \ref{them:Bochner}, the Gaussian measure $P$ with covariance function $ K : (s,t) \in \R \times \R \mapsto \hat{\mu}(t-s)$ is a well defined stationary Gaussian measure such that $K(t,t) = \hat{\mu}(0) = 1$ for every  $t \in \R$. We denote by $P$ the  centered Gaussian measure with covariance function $K.$ Let $P'$ be a Gaussian and Markov measure with non-negative covariance function $K'$ and constant variance equal to $1$. In order to show that $P'$ is a weak local  Markov transform of $P$, let fix $s<t \in \R^2$. Since  $0 \leq K'(s,t) \leq K'(s,s)^{1/2}K'(t,t)^{1/2} = 1$,  $\a := -|t-s|^{-1} \ln(K'(s,t))$ is well defined and we have $\a \in [0,+\infty]$. Moreover $K'(s,t) = \exp(-\a |t-s|) = K_\a(s,t)$.  Since $\a$ is a cluster point of the decay rate of $\hat{\mu}$ at $0^+$, according to Point \ref{point:Markov_stationnaire_va} of Theorem \ref{them:Markov_stationnaire_bis}, $P_\a$ is a weak local Markov transform of $P$. Hence, there exists $(R_n)_{n \geq 1} = ((t_k^n)_{n \in \ent{1}{m_n}}) \in \left(\SS_{[s,t]}\right)^{\N^*}$ such that $\lim_{n \to +\infty} \s_{R_n} = 0$ and $\lim_{n \to + \infty} K(s,t_1^n) \dots K(t_{m_n-1}^n,t) = K_\a (s,t) = K'(s,t).$ According to Proposition \ref{pro:formulation_calculatoire}, $P'$ is a weak local Markov transform of $P.$ Conversely, let  $P'$ be a weak local Markov transform of $P$ with covariance function $K'$. We want to show that $P'$ is a Gaussian and Markov measure with non-negative covariance function and constant variance function equal to $1$. First $P'$ is Markov by definition and Gaussian according to Proposition \ref{pro:Gaussianite_limite}. As $P'$ has the same variance function as $P$, its variance is constant equal to $1.$  Since $P'$ is a weak local Markov transform of $P$, there exists $(R_n)_{n \geq 1} = ((t_k^n)_{k \in \ent{1}{m_n}}) \in \left(\SS_{[s,t]}\right)^{\N^*}$ such that  $\lim_{n \to +\infty} K(t_0^n,t_1^n) \cdots K(t_{m_n-1}^n,t_{m_n}^n) = K'(s,t).$ According to Remark \ref{rq:pos_markov}, $K$ is positive, which implies $K'(s,t) \geq 0$ and finishes the proof. 
	\end{proof}
	In particular for a measure $P$ as in Theorem \ref{them:contre_exemple} and any measurable function $\a : \R \to [0,+\infty]$, $P_\a$ is a weak local Markov transform of $P.$ As one can see taking $\a : t \mapsto 0 \cdot 1_{t \leq 1} +  (+ \infty) \cdot 1_{t>1}$, this also proves that a weak Markov transform of a stationary measure is not always a stationary measure.

	\section{Global Markov transform, weak convergence of the transformations and SDE characterization of the mimicking process}\label{sec:global_markov_transforms}
	
	In the past sections, we gave some results local Markov transforms. In this section, we shall study global Markov transform and show how these results  can be used to get results about global Markov transforms. As said in the introduction, a global Markov transform is a law $P'$ of a process $X'$ obtained as limit of transformations of $X$ made Markov at certain times. We recall that, given a finite subset $R$ of $\R$, the transformation of $X$ made Markov at times $R$ is a process $X^R$ satisfying: 
	\begin{itemize}
		\item On every interval between two successive times of $R$, $X^R$ and $X$ have the same law;
		\item For every $r \in R$, $X^R$ is made Markov at time $r$: if one knows the value of the trajectory at time $r$, then the future $(X_t^R)_{t>r}$ of the trajectory  does not depend on the past  $(X_t^R)_{t<r}$ of the trajectory.
	\end{itemize}
	 It is possible to give a rigorous definition of the law $P_{[R]}$ of this process $X^R.$ This has been done by Boubel and Juillet  \cite[Definition $4.18$]{boubel_markov-quantile_2022} using the Kolmogorov extension theorem.
	\begin{defipro}[Measure made Markov at times $R$]\label{def:made_markov}
		Let $T \subset \R$ be a set, $E$ a Polish space, $P \in \PP(E^T)$ a probability measure and $R = \{r_1 < \dots < r_m\} \subset T$ a finite set of times. For each finite set $S \subset T$, we write \[S \cup R = \{s_1^0 < \dots < s_{k_0}^0 < r_1 < s_1^1 < \cdots < s_{k_1}^1 < r_2< \cdots < s_1^{m-1} < \cdots < s_{k_{m-1}}^{m-1}< r_m < s_1^m < \cdots < s_{k_m}^m\}\] and \[\mu_S := \proj^S_{\#}[ P^{s_1^0,\dots,s^0_{k_0},r_1} \circ P^{r_1, s_1^1 , \dots, s^1_{k_1},r_2} \circ \dots \circ P^{r_m,s_1^m,\dots, s_{k_m}^m}].\] Denoting by $\SS$ the class of finite subsets of $T$, it is readily verified that $(\mu_S)_{S \in \SS}$ is a consistent family of measures. According to the Kolmogorov extension Theorem, there exists a unique probability $P_{[R]}$ on $E^T$ such that $\proj^S_\# P_{[R]} = \mu_S$ for every $S \in \SS.$ We say that $P_{[R]} \in \PP(E^T)$ is the measure $P$ made Markov at times $R.$ Given a process $X$ with law $P$, we say that $X^R$ is the\footnote{We speak about \emph{the} transformation, even if we just have uniqueness in law} transformation of $X$ made Markov at times $R$ if $X^R$ has law $P_{[R]}.$
	\end{defipro} 

	In order to simplify the following definitions and notation, we will assume that $T = \R$, but the results remain true for any interval $T \subset \R.$ A global Markov transform will be defined as a Markov limit of a sequence $P_{[R_n]}$ for an admissible sequence $(R_n)_{n \geq 1}$ of set of times (see Notation \ref{def:admissible}). The topology that we will consider first is the topology of finite-dimensional convergence, that is to say  the weak convergence on $\PP(E^\R)$, where $E^\R$ is endowed with the product topology. More explicitly, a sequence $(P_n)_{n \geq 1} \in \PP(E^\R)^{\N^*}$ converges to $P \in \PP(E^\R)$ for this topology if for all $s_1< \cdots < s_m \in \R^m$, $(P^{s_1,\dots,s_m}_n)_{n \geq 1}$ converges to $P^{s_1,\dots,s_m}$ for the weak topology. We denote this convergence by $P_n \xrightarrow[n \to + \infty]{\fd} P$. The following remark states that for Gaussian measures, this convergence is equivalent to the two-dimensional convergence.
		\begin{remq}\label{remq:finite_dim_conv_gaussian}
		Let $\{P_n\}_{n \geq 1} \cup \{P'\} \subset \PP\left( \R^\R \right)$ be a set of centered Gaussian processes. Then $P_n \xrightarrow[n \to + \infty]{\fd} P'$ if and only if, for every $s<t \in \R^2$, $\lim_{n \to +\infty} \left(P_n\right)^{s,t} = \left(P'\right)^{s,t}$. The direct implication is trivial. Conversely,  assume, for every $s<t \in \R^2$, $\lim_{n \to + \infty} \left(P_n\right)^{s,t} = \left(P'\right)^{s,t}$  and fix  $t_1 < \cdots < t_m \in \R^m$. According to Lemma \ref{lem:stab_gaussienne}, $\lim_{n \to + \infty} P_n^{t_1, \dots, t_m} = P'^{t_1, \dots,t_m}$ if and only if $\Sigma_{P_n^{t_1, \dots, t_m}} \dcv{n \to + \infty}  \Sigma_{\left(P'\right)^{t_1, \dots,t_m}}.$ As for every Gaussian process $Q \in \PP(\R^{\R})$ and $i<j \in \ent{1}{m}^2$, we have $\Sigma_{Q^{t_1,\dots,t_m}}(i,j) = \Sigma_{Q^{t_i,t_j}}(1,2)$ we get the converse implication. 
	\end{remq} 
	
	\begin{nott}\label{def:admissible}
		Fix $\R^k_{\shortuparrow} := \ens{(t_1,\dots,t_k) \in \R^k}{t_1< \dots <t_k}$, $\R_{\shortuparrow} := \cup_{k \geq 1} \R^k_{\shortuparrow}$ and denote by \[\A := \ens{(R_n)_{n \geq 1} \in \left(\R_{\shortuparrow}\right)^{\N^*}}{ \lim_{n \to + \infty }\s_{R_n} = 0, \ \lim_{n \to + \infty} \inf(R_n) = -\infty \text{ and } \lim_{n \to + \infty} \sup(R_n) = +\infty}\]  the set of admissible sequences.
	\end{nott} 

	We can now give the definition of both weak and strong global Markov transform.
	
	\begin{defi}\label{defi:global_markov_transform}[Global Markov transform]
		Let $E$ be a Polish space and $P$ a probability measure on $E^\R.$ \begin{enumerate}
			\item We say that $P$ admits a weak global Markov transform if there exists a Markov measure $P'$ on $E^\R$ and $(R_n)_{n \geq 1} \in \A$ such that $ P_{[R_n]} \xrightarrow[n \to + \infty]{\fd} P'.$ We say that $P'$ is a weak global Markov transform of $P.$
			\item We say that $P$ admits a strong global Markov transform if there exists a Markov measure $P'$ on $E^\R$ such that for every  $(R_n)_{n \geq 1} \in \A $, we have $P_{[R_n]} \xrightarrow[n \to + \infty]{\fd} P'.$ We say that $P'$ is the strong global Markov transform of $P.$
		\end{enumerate}	
	\end{defi}

	The two-dimensional laws at time $(s,t)$ of the measure made Markov at times $R$  can be obtained by composing the transition kernel passing trough the times of $R$. More explicitly, one can readily check that $\left(P_{[R]}\right)^{s,t} = P_{\{\{s,t\} \cup (R \cap ]s,t[)\}}^{s,t}$ (see Definition \ref{def:compo} for the right-hand side of the equality). In the following proposition, we verify that this implies that a weak (resp. strong) global Markov transforms of $P$ is a weak (resp. strong) local Markov transform of $P$. A natural question to ask is if local Markov transforms are also global Markov transform. This is less obvious and false in general. However, using Remark \ref{remq:finite_dim_conv_gaussian}, we shall prove that it is true for strong Markov transform of Gaussian measures: If a Gaussian measure $P'$ is a strong local Markov transform of $P$, then $P'$ is also its strong global Markov transform\footnote{According to Remark \ref{remp:uniqueness_markov_transform}, this justifies that we talk about \emph{the} strong global Markov transform of $P$.}. In the case of weak Markov transforms, it stays true for stationary Gaussian measures under the hypothesis of Theorem \ref{them:Markov_stationnaire_bis}.

	\begin{pro}\label{pro:cov_proc_Markovinifie}
		Let   $P \in \PP(\R^\R)$ be a Gaussian measure  and $P' \in \PP(\R^\R)$ be a Markov measure.
		\begin{enumerate}
			\item The measure $P'$ is the strong global Markov transform of $P$ if and only if it is the strong local Markov transform of $P.$
			\item If $P'$ is a weak global Markov transform of $P$, then $P'$ is a weak local Markov transform of $P$.
		\end{enumerate}
	\end{pro}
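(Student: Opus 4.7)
The key tool is the identity $(P_{[R]})^{s,t} = P^{s,t}_{\{R|_{[s,t]}\}}$ with $R|_{[s,t]} := \{s,t\} \cup (R \cap ]s,t[) \in \SS_{[s,t]}$, already noted just before the statement. The plan is to use this identity to translate the two-dimensional marginals of $P_{[R_n]}$ into the compositions $P^{s,t}_{\{S_n\}}$ that appear in the definition of local Markov transforms, and to take care of the mesh-size constraints on both sides.

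First I would establish the elementary mesh bound: for $(R_n)_{n \geq 1} \in \A$ and every $s<t \in \R^2$, one has $\s_{R_n|_{[s,t]}} \leq \s_{R_n}$ for $n$ large enough (namely, once $\s_{R_n} < t-s$, $\inf R_n < s$ and $\sup R_n > t$). Indeed, under these conditions, any two consecutive points of $R_n|_{[s,t]}$ are either consecutive points of $R_n$, or obtained from a gap of $R_n$ straddling $s$ or $t$ by replacing one endpoint by $s$ or $t$, so each gap is bounded by $\s_{R_n}$. This yields $\lim_{n \to + \infty} \s_{R_n|_{[s,t]}} = 0$. Point $2$ is then immediate: if $(R_n)_{n \geq 1} \in \A$ witnesses that $P'$ is a weak global Markov transform, i.e.\ $P_{[R_n]} \xrightarrow[n \to + \infty]{\fd} P'$, then for each $s<t$ the sequence $R_n^{s,t} := R_n|_{[s,t]}$ lies in $\SS_{[s,t]}$ with mesh going to $0$, and the two-dimensional specialization of $\fd$-convergence gives $P^{s,t}_{\{R_n^{s,t}\}} = (P_{[R_n]})^{s,t} \to P'^{s,t}$, matching Point $1$ of Definition \ref{def:Markovinified}.

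For Point $1$, the direction ``strong global $\Rightarrow$ strong local'' requires no Gaussian hypothesis. Given $s<t$ and $(S_n)_{n \geq 1} \in (\SS_{[s,t]})^{\N^*}$ with $\s_{S_n} \to 0$, I would extend $(S_n)$ to an admissible sequence $(R_n)_{n \geq 1} \in \A$ by adjoining to $S_n$ finitely many points outside $]s,t[$ (e.g.\ a truncation of the grid $n^{-1}\Z$) so that $R_n|_{[s,t]} = S_n$; the strong global property applied at the two times $\{s,t\}$ then delivers $P^{s,t}_{\{S_n\}} = (P_{[R_n]})^{s,t} \to P'^{s,t}$. Conversely, for ``strong local $\Rightarrow$ strong global'', the Gaussian structure becomes essential. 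Fix $(R_n)_{n \geq 1} \in \A$. First I would observe that $P_{[R_n]}$ is Gaussian: by Definition/Proposition \ref{def:made_markov} its finite-dimensional marginals are concatenations of the two-dimensional Gaussian marginals of $P$, and concatenations of Gaussian transport plans remain Gaussian by Lemma \ref{lem:compo_gaussien}. Likewise, $P'$ is Gaussian by Proposition \ref{pro:Gaussianite_limite} (as it is in particular a weak local Markov transform of a Gaussian measure). Applying the mesh bound and the strong local property to $R_n^{s,t} := R_n|_{[s,t]}$ yields $(P_{[R_n]})^{s,t} \to P'^{s,t}$ for every $s<t$. Finally, Remark \ref{remq:finite_dim_conv_gaussian} (combined, if needed, with the centering reduction of Remark \ref{remq:centered_enough}) upgrades this convergence of all $2$-dimensional marginals to $\fd$-convergence, giving $P_{[R_n]} \xrightarrow[n \to + \infty]{\fd} P'$.

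The main obstacle, and the reason the Gaussian hypothesis is used only in Point $1$, is precisely this last step: for a general process, $\fd$-convergence is strictly stronger than convergence of $2$-dimensional marginals, so the strong local property alone would not imply the strong global property. It is Remark \ref{remq:finite_dim_conv_gaussian}, specific to Gaussian measures, that closes the gap.
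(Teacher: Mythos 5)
Your proposal is correct and follows essentially the same route as the paper: restricting $R_n$ to $[s,t]$ (and conversely extending a partition of $[s,t]$ to an admissible sequence by adjoining a grid outside $]s,t[$) to pass between $(P_{[R_n]})^{s,t}$ and $P^{s,t}_{\{R_n|_{[s,t]}\}}$, and then invoking Remark \ref{remq:finite_dim_conv_gaussian} to upgrade two-dimensional convergence to finite-dimensional convergence in the Gaussian case. Your explicit verification of the mesh bound and of the Gaussianity of $P_{[R_n]}$ and $P'$ only makes more precise steps the paper leaves implicit.
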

	\begin{proof}
		\begin{enumerate}
			\item Assume $P'$ is the strong \emph{global} Markov transform of $P.$ In order to show that $P'$ is the strong \emph{local} Markov transform of $P$,  fix $s<t \in \R^2$ and  $(\ti{R}_n)_{n \geq 1} \in \left(\SS_{[s,t]}\right)^{\N^*}$ satisfying $\lim_{n \to +\infty} \s_{\ti{R}_n} = 0.$ For every $n \geq 1$, we set $R_n := \ti{R}_n \cup \big( ([-n,s[ \cup ]t,n])\cap \left(2^{-n} \mathbb{Z} \right) \big) $. We have $(R_n)_{n \geq 1} \in \A$, $(R_n \cap ]s,t[ ) \cup \{s,t\} = \ti{R}_n$ and  $P_{[R_n]} \xrightarrow[n \to + \infty]{\fd} P$. Hence, we get $ P^{s,t}_{\{\ti{R}_n\}} =  P^{s,t}_{(R_n \cap ]s,t[ ) \cup \{s,t\}} = \left(P_{[R_n]}\right)^{s,t}  \dcv{n \to + \infty} {P'}^{s,t}$. 
			Conversely, assume $P'$ is the strong \emph{local} Markov transform of $P$. According to Remark \ref{remq:finite_dim_conv_gaussian}, it is sufficient to show that, for every $s<t \in \R^2$, $\lim_{n \to + \infty} \left( P_{[R_n]} \right)^{s,t} = (P')^{s,t}$, \ie ,\ $ \lim_{n \to + \infty} P_{\{\ti{R}_n\}}^{s,t} = (P')^{s,t}$ where $\ti{R}_n := \{s,t\} \cup (R_n \cap ]s,t[).$ Since $(\ti{R}_n)_{n \geq 1} \in \left( \SS_{[s,t]} \right)^{\N^*}$, $\lim_{n \to + \infty} \s_{\ti{R}_n} = 0$ and $P'$ is a strong local Markov transform of $P$, we get  $\left( P_{[R_n]} \right)^{s,t} =  P_{\{\ti{R}_n\}}^{s,t} \dcv{n \to + \infty} \left(P'\right)^{s,t}$. Thus $P'$ is the strong global Markov transform of $P.$
			\item Fix  $(R_n)_{n \geq 1} \in \A $ such that $P_{[R_n]} \xrightarrow[{n \to + \infty}]{\fd} P'.$ Given $s<t \in \R^2$, put $\ti{R}_n := \{s,t\} \cup (R_n \cap [s,t]).$ We have $(\ti{R}_n)_{n \geq 1} \in \left(\SS_{[s,t]}\right)^{\N^*}$, $\lim_{n \to + \infty} \s_{\ti{R}_n} = 0$ and $P^{s,t}_{\{\ti{R}_n\}} =  P^{s,t}_{(R_n \cap ]s,t[ ) \cup \{s,t\}} = \left(P_{[R_n]}\right)^{s,t} \dcv{n \to +\infty} \left(P'\right)^{s,t}$. As this is true for every $s<t \in \R^2$, $P'$ is a weak local Markov transform of $P.$
		\end{enumerate}
	\end{proof}

	Notice that we did not use the fact that $P$ is Gaussian to prove that every weak (resp. strong) global Markov transforms of $P$ is a weak (resp. strong) local Markov transform of $P$, but used it to prove that every strong local Markov transforms of $P$ is the strong global Markov transform of $P$. Indeed, we used the Remark \ref{remq:finite_dim_conv_gaussian}, which is only valid in the Gaussian case.
	\begin{pro}\label{pro:finite_dim_conv_stati}
		Let   $K :(s,t) \in T^2 \mapsto \ti{K}(t-s) \in \R$ be a continuous stationary positive semi-definite kernel that satisfies $\ti{K}(0) = 1.$ Let $P$ be a centered Gaussian measure with covariance function $K$ and set $L^K : h \mapsto h^{-1}(1-\ti{K}(h)).$ Assume $\a \in [0,+\infty]$ is a cluster point of $L^K$ at $0^+$ and let $(s_n)_{n \geq 1}$ be a positive sequence converging to zero such that $\lim_{n \to + \infty} L^K(s_n) = \a.$ Then, writing $R_n := \big( s_n \cdot \Z \big) \cap [-n,n]$, we have $P_{[R_n]}  \xrightarrow[n \to + \infty]{\fd} P_\a$. In particular, the measure $P_\a$ is a weak global Markov transform of $P.$ 
	\end{pro}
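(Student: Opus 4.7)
The plan is to combine Point~2 of Theorem~\ref{them:Markov_stationnaire_bis} (which already gives the two-dimensional convergence along a specific partition) with the observation that two-dimensional marginals of a measure made Markov at times $R_n$ coincide with compositions along the trace of $R_n$ on the interval of interest, and finally lift two-dimensional convergence to finite-dimensional convergence via Remark~\ref{remq:finite_dim_conv_gaussian}.

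First, I would check that $(R_n)_{n \geq 1} \in \A$: with $R_n = (s_n \Z) \cap [-n,n]$, the mesh between consecutive points is exactly $s_n \to 0$, while $\inf(R_n) \to -\infty$ and $\sup(R_n) \to +\infty$ since for $n$ large, $R_n$ meets every interval of length $s_n$ in $[-n,n]$. Next, I would fix $s<t \in \R^2$. From Definition/Proposition~\ref{def:made_markov} (more precisely, the marginalization of the concatenation defining $\mu_S$), one reads off
\[
\left(P_{[R_n]}\right)^{s,t} = P^{s,t}_{\{\{s,t\} \cup (R_n \cap \,]s,t[\,)\}}.
\]
For every $n$ large enough that $[s,t] \subset [-n,n]$, the intersection $R_n \cap \,]s,t[\,$ equals $(s_n \Z) \cap \,]s,t[\,$, so the right-hand side is precisely $P^{s,t}_{\{R_n^{s,t}\}}$ in the notation of Theorem~\ref{them:Markov_stationnaire_bis}.

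At this point, Point~\ref{point:Markov_stationnaire_va} of Theorem~\ref{them:Markov_stationnaire_bis} applies directly and yields
\[
\lim_{n \to + \infty} \left(P_{[R_n]}\right)^{s,t} = \lim_{n \to + \infty} P^{s,t}_{\{R_n^{s,t}\}} = P_\a^{s,t}.
\]
Since each $P_{[R_n]}$ is centered Gaussian (it is built from concatenations of Gaussian two-dimensional marginals of $P$, which are Gaussian by the concatenation formula of Lemma~\ref{lem:compo_gaussien}) and $P_\a$ is centered Gaussian by Proposition~\ref{pro:alpha_proces}, Remark~\ref{remq:finite_dim_conv_gaussian} promotes the two-dimensional convergence to finite-dimensional convergence: $P_{[R_n]} \xrightarrow[n \to + \infty]{\fd} P_\a$. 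Combined with the admissibility of $(R_n)_{n \geq 1}$, this exactly says that $P_\a$ is a weak global Markov transform of $P$.

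There is no real obstacle here: the proof is essentially an assembly of three prior ingredients, namely the local convergence established in Theorem~\ref{them:Markov_stationnaire_bis}, the elementary identity between $(P_{[R_n]})^{s,t}$ and a composition along the trace of $R_n$ on $]s,t[$ coming directly from Definition~\ref{def:made_markov}, and the Gaussian f.d.-versus-2D equivalence of Remark~\ref{remq:finite_dim_conv_gaussian}. The only point needing a brief justification is the marginal identity $(P_{[R_n]})^{s,t} = P^{s,t}_{\{\{s,t\} \cup (R_n \cap \,]s,t[\,)\}}$, but this is immediate once one applies $\proj^{\{s,t\}}_\#$ to the concatenation defining $\mu_{\{s,t\} \cup R_n}$ in Definition/Proposition~\ref{def:made_markov} and recognizes that projecting removes the intermediate points lying outside $[s,t]$ as well as those inside.
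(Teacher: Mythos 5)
Your proposal is correct and follows essentially the same route as the paper: reduce to two-dimensional convergence via Remark \ref{remq:finite_dim_conv_gaussian}, identify $(P_{[R_n]})^{s,t}$ with the composition along $\{s,t\}\cup(s_n\Z\cap{]s,t[})$ for $n$ large enough that $[s,t]\subset[-n,n]$, and invoke Point 2 of Theorem \ref{them:Markov_stationnaire_bis}. The only difference is that you additionally verify admissibility of $(R_n)_{n\geq 1}$ explicitly, which the paper leaves implicit.
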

	\begin{proof}
		Let $(s_n)_{n \geq 1}$ and $(R_n)_{n \geq 1}$ be as in the statement. According to Remark \ref{remq:finite_dim_conv_gaussian}, we are left to prove that $\lim_{n \to +\infty} P_{[R_n]}^{s,t} = P_\a^{s,t}$ for every $s<t \in \R^2$. Since for every $n \geq \max(t,-s)$, we have $P_{[R_n]}^{s,t} = P_{\{\{s,t\} \cup \left(]s,t[ \cap R_n \right)\}}^{s,t} = P^{s,t}_{\{\{s,t\} \cup \left(]s,t[ \cap s_n\Z \cap ]-n,n[ \right)\}} = P^{s,t}_{\{\{s,t\} \cup \left(]s,t[ \cap s_n\Z  \right)\}} $, this corresponds exactly to the second point of Theorem \ref{them:Markov_stationnaire_bis}.
	\end{proof}
	
	\begin{cor}
		Let $P$ be the stationary Gaussian process appearing in Theorem \ref{them:contre_exemple}. Then, for every $\a \in [0,+\infty]$, $P_\a$ is weak global Markov transform of $P.$
	\end{cor}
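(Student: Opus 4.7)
The plan is to invoke Proposition \ref{pro:finite_dim_conv_stati} once for every $\a \in [0,+\infty]$. The measure $P$ of Theorem \ref{them:contre_exemple} is, by construction, the centered stationary Gaussian measure with covariance function $K : (s,t) \mapsto \hat{\mu}(t-s)$, where $\mu$ is the symmetric probability measure produced in Proposition \ref{pro:va_fourier}. Setting $\ti{K} := \hat{\mu}$, we have $\ti{K}(0) = \mu(\R) = 1$ and $\ti{K}$ is continuous (as the Fourier transform of a finite measure), so the structural hypotheses of Proposition \ref{pro:finite_dim_conv_stati} are satisfied.

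The decisive ingredient is the defining property of $\mu$: the decay rate $L^K : h \mapsto h^{-1}(1-\hat{\mu}(h))$ admits every element of $[0,+\infty]$ as a cluster point at $0^+$. Hence, for any prescribed $\a \in [0,+\infty]$, I would extract a positive sequence $(s_n)_{n \geq 1}$ decreasing to $0$ with $L^K(s_n) \dcv{n \to + \infty} \a$. Setting $R_n := (s_n \cdot \Z) \cap [-n,n]$, Proposition \ref{pro:finite_dim_conv_stati} then yields $(R_n)_{n \geq 1} \in \A$ and $P_{[R_n]} \xrightarrow[n \to + \infty]{\fd} P_\a$, which is precisely the assertion that $P_\a$ is a weak global Markov transform of $P$ in the sense of Definition \ref{defi:global_markov_transform}.

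There is no genuine obstacle here: the construction in Section \ref{sec:counterexample} was engineered so that every $\a \in [0,+\infty]$ arises as a cluster value of $L^K$ at $0^+$, and Proposition \ref{pro:finite_dim_conv_stati} was engineered so that a single such cluster value suffices to exhibit the corresponding weak global Markov transform. The corollary amounts to concatenating these two facts, with no additional estimate to perform.
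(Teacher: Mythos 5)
Your proof is correct and follows essentially the same route as the paper: both arguments simply combine the fact that, by construction, every $\a \in [0,+\infty]$ is a cluster point of $L^K$ at $0^+$ with Proposition \ref{pro:finite_dim_conv_stati}. Your version merely spells out the verification of the structural hypotheses ($\ti{K}$ continuous, $\ti{K}(0)=1$) and the choice of $(s_n)_{n\geq 1}$ and $R_n$, which the paper leaves implicit.
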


	\begin{proof}
		By construction of $P$, every $\a \in [0,+ \infty]$ is a cluster point of the decay rate of the correlation function of $P$. According to Proposition \ref{pro:finite_dim_conv_stati}, for every $\a \in [0,+\infty]$, the process $P_\a$ is a weak global Markov transform of $P.$ 
	\end{proof}

	Until now, we only considered the topology of finite-dimensional convergence. In the case where $a< b \in \R^2$, we can endow $\CC([a,b],\R)$ with the topology of the uniform convergence, inducing a topology on $\PP(\CC([a,b],\R))$, that we simply call weak topology. We shall write $ P_n \xrightarrow[n\to +\infty]{w} P$ to express that a sequence $(P_n)_{n \geq 1} \in \PP(\CC([a,b],\R))^{\N^*}$ converges to $P \in \PP(\CC([a,b],\R))$ for this topology, \ie ,\ $\int f d P_n \dcv{ n \to + \infty} \int f d P$ for every continuous bounded function $f : (\CC([a,b], \R), \n{ \cdot}{\infty} ) \to \R.$ Weak convergence implies convergence for the finite-dimensional topology, but in general the converse is false. However, if the family $(P_n)_{n \geq 1}$ is \emph{tight}\footnote{In this paper we only use tightness criteria not the definition of tightness. For completeness, we refer to \cite{billingsley_probability_1995}.}, then both convergences are equivalent. To carry out our convergence result from the finite-dimensional topology to the weak convergence topology, we need the Kolmogorov-Chenstov criterion to ensure that our measures $P$ and $P_{[R_n]}$ are concentrated on $\CC([a,b],\R)$ and the Kolmogorov tightness criterion to ensure that  $\{P\} \cup \{P_{[R_n]} \ ; \ n \geq 1\}$ is tight. For a proof of the Kolmogorov continuity criterion, we refer to \cite[Theorem 2.9]{le_gall_brownian_2016}, whereas for the  Kolmogorov tightness criteria we refer to \cite[Theorem 23.7]{kallenberg_foundations_2021}.
	\begin{them}\label{them:tighness_criteria}
		Let  $T \subset \R$ be a compact set. 
		\begin{description}
			\leftskip=0.3in
			\item[Kolmogorov-Chenstov criteria:] If $P \in \PP(\R^T)$ and \ \[\exists a,b,C \in \R_+^*, \forall (s,t) \in T^2, \int_{\R^2} |x-y|^b P^{s,t}(dx,dy) \leq C |t-s|^{1+a},\] then $P$ is concentrated on continuous paths.
			\item[Kolmogorov tightness criteria:] If $(P_n)_{n \geq 1} \in \PP(\CC(T,\R))^{\N^*}$ and \ \[\exists a,b,C \in \R_+^*, \forall (s,t) \in T^2, \sup_{n \geq 1} \int_{\R^2} |x-y|^b P_n^{s,t}(dx,dy) \leq C |t-s|^{1+a},\] then $(P_n)_{n \geq 1}$ is tight for the weak convergence.
		\end{description}
		
	\end{them}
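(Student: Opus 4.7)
The plan is to derive both criteria from a single dyadic approximation argument. Without loss of generality, assume $T = [0,1]$ (a general compact interval reduces to this by an affine change of variables, which only modifies $C$). Let $D_n := \ens{k 2^{-n}}{k \in \ent{0}{2^n}}$ and $D := \bigcup_{n \geq 1} D_n$. Applying Markov's inequality to the moment hypothesis, for any $\gamma > 0$ and $k \in \ent{0}{2^n - 1}$,
\[ P\bigl( |X_{(k+1)2^{-n}} - X_{k 2^{-n}}| > 2^{-\gamma n} \bigr) \leq C \, 2^{\gamma b n} 2^{-n(1+a)} = C \, 2^{-n(1+a - \gamma b)}. \]
A union bound over $k$ gives a bound of $C \, 2^{-n(a - \gamma b)}$, which is summable in $n$ as soon as $\gamma \in (0, a/b)$.

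For the Kolmogorov-Chenstov criterion, Borel-Cantelli then ensures that almost surely, for $n$ large enough, every dyadic increment at scale $n$ is bounded by $2^{-\gamma n}$. Any two dyadic points $s < t$ with $|t-s| < 2^{-N}$ can be joined by a finite chain containing at most two dyadic intervals at each scale $\geq N$, and telescoping yields $|X_t - X_s| \leq K |t-s|^\gamma$ almost surely, uniformly on $D$. Since $D$ is dense in $[0,1]$, this Hölder estimate extends by uniform continuity to a continuous modification of $X$ on $[0,1]$, so that $P$ is concentrated on continuous paths.

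For the tightness criterion, the same dyadic bounds hold uniformly in $n$ thanks to the $\sup_n$ in the hypothesis. They imply a uniform control of the modulus of continuity: for every $\epsilon > 0$, there exists $\delta > 0$ such that $\sup_n P_n\bigl( w_X(\delta) > \epsilon \bigr) < \epsilon$, where $w_X(\delta) := \sup_{|s-t|<\delta}|X_s - X_t|$. Combined with tightness of $P_n^t$ at any fixed $t$ (which follows from the moment bound applied with a fixed base point, using that $P_n$ lives on $\CC(T,\R)$), the Arzelà--Ascoli characterization of compact subsets of $\CC([0,1],\R)$ together with Prokhorov's theorem yields tightness of $(P_n)_{n \geq 1}$ for the weak topology.

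The main obstacle is the telescoping step that converts scale-by-scale dyadic increment bounds into a genuine Hölder estimate on $D$, and, for the tightness part, the verification that the uniform modulus-of-continuity estimate together with a one-point tightness condition suffices to extract subsequential limits in $\PP(\CC(T,\R))$. In the paper's applications, both points are easy to control since the processes are Gaussian with explicit covariance structure, so that all moments are computable and the one-dimensional marginals are fixed along the approximating sequence.
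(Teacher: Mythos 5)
The paper does not prove this theorem: it is quoted as a known result, with the continuity criterion referred to Le~Gall and the tightness criterion to Kallenberg. Your dyadic argument for the Kolmogorov--Chenstov part is exactly the standard proof in those references (Markov's inequality on dyadic increments, union bound, Borel--Cantelli, chaining to a H\"older estimate on the dyadics, extension by density), and it is correct in outline; the only caveat is that for $P \in \PP(\R^T)$ with the cylindrical $\sigma$-algebra the set of continuous paths is not measurable, so the conclusion should be read, as in the references, as the existence of a continuous modification (equivalently, of a measure on $\CC(T,\R)$ with the same finite-dimensional laws), which is precisely what your construction produces.

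The tightness half has a genuine gap. You claim that tightness of the one-point marginals $P_n^{t}$ ``follows from the moment bound applied with a fixed base point''. It does not: the hypothesis only controls increments $|x-y|$ and gives no information about the location of the paths. Taking $P_n$ to be the Dirac mass at the constant path equal to $n$ makes every increment vanish, so the moment condition holds for any $a,b,C$, yet $(P_n)_{n \geq 1}$ is not tight. The Arzel\`a--Ascoli/Prokhorov route you describe genuinely needs, in addition to the uniform modulus-of-continuity estimate, the tightness of $(P_n^{t_0})_{n \geq 1}$ for some fixed $t_0 \in T$ as a \emph{separate} hypothesis --- this is how the criterion is stated in Kallenberg, and the version quoted in the paper silently omits it as well. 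The omission is harmless in the paper's applications, where the measures $P_{[R_n]}$ all have one-dimensional marginals equal to $\NN(0,1)$, but your proof as written does not close this step, and no proof can without adding that hypothesis.
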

	
	In order to prove the uniform bounds of Theorem  \ref{them:tighness_criteria}, we first establish a technical lemma.

	\begin{lem}\label{lem:inequality_mark_variance}
		Consider a continuous semi-definite positive kernel $K : \R^2 \to \R$ such that $K(t,t) = 1$ for every $t \in \R$. Fix $a<b \in \R^2$ and denote by $P$ a centered Gaussian measure with covariance function $K.$
		
		\begin{enumerate}
			\item Assume, for every $s<t \in \R^2$, 
			\begin{equation}\label{eq:hyp_conv_unif_R_mesure}
				\sup_{v \in [s,t]} \left|L_v^K(h) - \a^K(v) \right| \dcv{h \to 0^+} 0.
			\end{equation} 
		Then \[ \forall (R_n)_{n \geq 1} \in \A, \exists M \in \R_+, \forall (s,t) \in [a,b]^2,\forall n \geq 1, 1- K_n(s,t) \leq M|s-t|,\] where $K_n$ stands for the covariance function of $P_{[R_n]}.$
			\item Assume  $K : (s,t) \mapsto \ti{K}(t-s)$ is a stationary kernel, $ \a \in \R_+$ is a cluster point of $L^K : h \in \R_+^* \mapsto h^{-1}(1- \ti{K}(h)) $ and $L^K$ is bounded. Fix a positive sequence $(s_n)_{n \geq 1}$  converging to zero such that $\lim_{n \to + \infty} L^K(s_n) = \a$ and put $(R_n)_{n \geq 1} := \left( \left( s_n \cdot \Z \right) \cap [-n,n] \right)_{n \geq 1} \in \A$.  Then, \[\exists M \in \R_+,\forall (s,t) \in [a,b]^2, \forall n \geq 1, 1- K_n(s,t) \leq M|s-t|,\]
			where $K_n$ stands for the covariance function of $P_{[R_n]}.$ 
		\end{enumerate} 
	\end{lem}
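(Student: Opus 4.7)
The plan is to use the composition formula of Lemma~\ref{lem:compo_gaussien} to express $K_n(s,t)$ as a product of consecutive correlations along the partition $\{s,t\} \cup (R_n \cap \, ]s,t[\,)$, then apply a telescoping identity to reduce the estimate to a pointwise bound on $L^K$ that is uniform in $(v,h)$.

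Fix $s<t$ in $[a,b]$ (the case $s=t$ being trivial) and write $\{s,t\} \cup (R_n \cap \, ]s,t[\,) = \{t_0 < t_1 < \cdots < t_m\}$ with $t_0 = s$ and $t_m = t$. By Definition~\ref{def:made_markov}, $(P_{[R_n]})^{s,t} = P^{t_0,t_1} \cdot P^{t_1,t_2} \cdots P^{t_{m-1},t_m}$; since $K(u,u)=1$ for every $u$, the composition formula of Lemma~\ref{lem:compo_gaussien} collapses to $K_n(s,t) = \prod_{k=0}^{m-1} \rho_k$ with $\rho_k := K(t_k,t_{k+1}) \in [-1,1]$. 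Setting $S_k := \prod_{j<k} \rho_j$, so that $|S_k| \leq 1$, the telescoping identity $1 - \prod_k \rho_k = \sum_k S_k(1-\rho_k)$ together with $\rho_k \leq 1$ yields
\begin{equation*}
    0 \leq 1 - K_n(s,t) \leq \sum_{k=0}^{m-1} (1-\rho_k) = \sum_{k=0}^{m-1} h_k \, L_{t_k}^K(h_k), \qquad h_k := t_{k+1}-t_k.
\end{equation*}
It thus suffices to exhibit a constant $M \in \R_+$, independent of $n$ and of the partition, such that $L_v^K(h) \leq M$ whenever $v,v+h \in [a,b]$ and $h>0$: summing then gives $1-K_n(s,t) \leq M(t-s)$.

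For Part~$1$, the hypothesis \eqref{eq:hyp_conv_unif_R_mesure} exhibits $\a^K$ as the uniform limit on $[a,b]$ of the continuous maps $v \mapsto L_v^K(h)$, hence $\a^K$ is continuous and bounded on $[a,b]$ by some constant $C$. Picking $h^*>0$ such that $|L_v^K(h)-\a^K(v)| \leq 1$ for $h \in (0,h^*]$ gives $L_v^K(h) \leq C+1$ on this range; for $h \in [h^*,b-a]$ the bound $|K|\leq 1$ yields $L_v^K(h) \leq 2/h^*$; taking the maximum produces $M$. For Part~$2$, the desired $M := \sup_{h>0} L^K(h)$ is given directly by hypothesis, and stationarity gives $1-\rho_k = 1-\tilde{K}(h_k) \leq M h_k$, so the same telescoping estimate concludes.

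The only non-routine step is the telescoping identity, which crucially remains valid even when some $\rho_k$ are negative, thanks to the universal bound $|S_k| \leq 1$; once this is in hand, both parts reduce to uniform boundedness of $L^K$ on the compact region $\{(v,h) : v,v+h \in [a,b], h>0\}$, which is straightforward in each case.
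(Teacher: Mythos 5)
Your proof is correct, and it takes a genuinely different route from the paper's. The paper proves the bound by passing to logarithms: it writes $\ln K_n(s,t)=\sum_k \ln K(t_k,t_{k+1})$, expands each term as $-h_kL_{t_k}^K(h_k)\bigl[1+\ee\bigl(-h_kL_{t_k}^K(h_k)\bigr)\bigr]$, assembles the constants $M_1,\dots,M_5$ to get the two-sided estimate $\bigl|\ln K_n(s,t)+\int_s^t\a^K(u)\,du\bigr|\le M_5|t-s|$, and only then exponentiates and uses $e^{-x}\ge 1-x$. You instead stay multiplicative: the telescoping identity $1-\prod_k\rho_k=\sum_k S_k(1-\rho_k)$ with $|S_k|\le 1$ and $1-\rho_k\ge 0$ gives $1-K_n(s,t)\le\sum_k h_k L_{t_k}^K(h_k)$ directly, so the whole lemma reduces to a uniform bound $L_v^K(h)\le M$ on the relevant compact region, which you obtain correctly in both parts (near $h=0$ from the hypothesis, away from $h=0$ from $|K|\le 1$). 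Your argument is more elementary and in one respect more robust: it never requires $K(t_k,t_{k+1})>0$, whereas the paper's logarithmic decomposition implicitly does (this is automatic for small mesh but not obviously so for the first few $n$ of an admissible sequence). What the paper's heavier route buys is reuse of the exact computation already set up in the proof of Theorem \ref{them:Markov_stationnaire} and, as a by-product, the quantitative comparison of $\ln K_n(s,t)$ with $-\int_s^t\a^K$, which is closer in spirit to what is used elsewhere; but for the stated inequality your approach suffices and is cleaner.
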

	
	\begin{proof}
		We successively prove both statement.
		\begin{enumerate}
			\item Fix $(R_n)_{n \geq 1} \in \A$ and $s<t \in [a,b]^2$. We write $L_t$ instead of $L_t^K$, $\a$ instead of $\a^K$ and, for every $v \in \R,$ we put $L_v(0) :=\a(v)$.  Set $\sigma^* := \sup_{n \geq 1} \s_{R_n}$,  $ M_1:= \sup_{0 \leq h \leq \sigma^*, v \in [a,b]} |L_v(h) - \a(v)|$, $M_2 := \sup_{x,y \in [a,b]} |\a(x)-\a(y) |$, $M_3 := \sup_{0 \leq h \leq \sigma^*, v \in [a,b]} L_v(h)$,  $M_4 := \sup_{|x| \leq \sigma^* M_3} |\ee(x) |$ and $M_5 := M_1 + M_2 + M_3 M_4$, where $\ee$ is defined in Notation \ref{nott:DL_Taylor}. As in Theorem \ref{them:Markov_stationnaire},  hypothesis \eqref{eq:hyp_conv_unif_R_mesure} implies $L$ that is continuous on $[a,b] \times [0,\s^*]$ and thus $M_3< + \infty$. Since $K$, $\a$ and $\ee$ are continuous, $M_5 < + \infty$. For a fixed $n \geq 1$, we write $\{s,t\} \cup \left( R_n \cap ]s,t[ \right) = \{t_0 < \cdots < t_{m}\}$ and $h_k := t_{k+1} - t_k$ for every $k \in \{0, \dots , m-1\}.$ Since, $\left(P_{[R_n]}\right)^{s,t} = P_{\{\{s,t\} \cup \left( R_n \cap ]s,t[ \right)\}}^{s,t}$, the composition formula in Lemma \ref{lem:compo_gaussien} gives $K_n(s,t) = K(t_0,t_1) \cdots K(t_{m-1},t_m).$ As in the proof of Theorem \ref{them:Markov_stationnaire},
			\begin{align*}
				\left| \ln(K_n(s,t)) - \left(-\int_s^t \a(u) du \right) \right| 
				&\leq  \left|\sum_{k=0}^{m-1} h_k L_{t_k}(h_k) - \sum_{k=0}^{m-1} h_k \a(t_k) \right| +\left| \sum_{k=0}^{m-1} h_k \a(t_k) - \int_s^t \a(u) du \right|  \\ &~~~~~~~~~~~~~  + \left|\sum_{k=0}^{m_n-1} h_k L_{t_k}(h_k) \ee( -h_k L_{t_k}(h_k)) \right| \\
				&\leq   \sum_{k=0}^{m-1} h_k^n \left|L_{t_k}(h_k) - \a(t_k) \right| + \sum_{k=0}^{m-1} \int_{t_k}^{t_{k+1}} |\a(t_k) - \a(u)|du \\ &+  \sum_{k=0}^{m-1} h_k L_{t_k}(h_k)\left|\ee\left(- h_kL_{t_k}(h_k) \right)\right| \\
				&\leq |t-s| M_1 + |t-s| M_2+ |t-s|M_3 M_4 \\
				&= |t-s| M_5.
			\end{align*}
		    Thus $\ln(K_n(s,t)) \geq   - M_5|t-s| -\int_s^t \a(u) du \geq  - M|t-s|$, where $M = M_5 + \sup_{u \in [a,b]} |\a(u)|$. Hence, for every $s<t \in [a,b]^2$, $K_n(s,t) \geq \exp(-M|t-s|) \geq 1-M|s-t|$  which proves our result.
			\item Let $(s_n)_{n \geq 1}$ and $R_n$ be as in the statement. Put $R_n = \{t_0 < \cdots < t_m\}$, $\s^* := \sup_{n \geq 1} \s_{R_n}$ and, for every $k \in \ent{0}{m-1}$, $h_k := t_{k+1} - t_k.$  Defining $\ee$ as in Notation \ref{nott:DL_Taylor}, for every $(v,h) \in \R \times \R_+^*$, $\ln \left( K(v,v+h) \right) = -hL^K(h)\left[1 + \ee\left(\ti{K}(h)\right)\right]$. Hence,
			\begin{align*}
				\big|\ln(K_n(s,t))\big| &= \left| \sum_{k=0}^{m-1} h_k L^K(h_k)\left[1 + \ee\left(\ti{K}(h_k)-1\right)\right] \right| \\
				&\leq |t-s| M,
			\end{align*}
			where $M :=\sup_{0 \leq h \leq \s^*}|L^K(h)|\left(1+ \sup_{|x|\leq \s^*} |\ee(\ti{K}(x)-1)|\right)$. Since $L^K$ in bounded and $\ee, \ti{K}$ are continuous, $M$ is finite. So $\ln(K_n(s,t)) \geq -M|t-s|$, which implies $K_n(s,t) \geq e^{-M|t-s|} \geq 1-M|t-s|$ and shows the result.
		\end{enumerate}
	\end{proof}

	We can now apply our criteria to prove weak convergence.
	\begin{them}\label{them:fd_to_wiener}
		Let us consider a continuous positive semi-definite  kernel $K : \R \times \R \to \R$ such that $K(t,t) = 1$ for every $t \in \R.$ We denote by $P$ a centered Gaussian measure with covariance function $K.$ 
		\begin{enumerate}
			\item Assume, for every $s<t \in \R^2$,
			\begin{equation*}
				\sup_{v \in [s,t]} \left|L^K_v(h) - \a^K(v)\right| \dcv{h \to 0^+} 0.
			\end{equation*}  
				 Then  \[ \forall (R_n)_{n \geq 1} \in \A,  \forall a<b \in \R^2, \proj^{[a,b]}_\# P_{[R_n]} \xrightarrow[n \to + \infty]{w} \proj^{[a,b]}_\# P_{\a^K}.\]
			\item Assume  $K : (s,t) \mapsto \ti{K}(t-s)$ is a stationary positive semi-definite kernel, $ \a \in \R_+$ is a cluster point of $L : h \in \R_+^* \mapsto h^{-1}(1- \ti{K}(h)) $ and $L$ is bounded. Fix $(s_n)_{n \geq 1}$ a positive sequence converging to zero such that $\lim_{n \to + \infty} L^K(s_n) = \a$ and set $(R_n)_{n \geq 1} := \left( \left( s_n \cdot \Z \right) \cap [-n,n] \right)_{n \geq 1} \in \A$. Then \[\forall a<b \in \R^2, \proj^{[a,b]}_\# P_{[R_n]} \xrightarrow[n \to + \infty]{w} \proj^{[a,b]}_\# P_\a.\]
		\end{enumerate}
	\end{them}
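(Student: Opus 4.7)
The plan is to combine the finite-dimensional convergence already available from earlier in the paper with a Kolmogorov-type tightness argument on $\CC([a,b], \R)$. Under the hypotheses of point $1$, I would first invoke Theorem \ref{them:Markov_stationnaire} to identify $P_{\a^K}$ as the strong local Markov transform of $P$, and then Proposition \ref{pro:cov_proc_Markovinifie} to promote this to $P_{[R_n]} \xrightarrow[n \to + \infty]{\fd} P_{\a^K}$ for every $(R_n)_{n \geq 1} \in \A$. Under the hypotheses of point $2$, Proposition \ref{pro:finite_dim_conv_stati} directly provides $P_{[R_n]} \xrightarrow[n \to + \infty]{\fd} P_\a$ for the specific sequence considered. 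It remains to upgrade this finite-dimensional convergence to weak convergence on $\CC([a,b], \R)$.

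For the upgrade, I would exploit the Gaussian structure to control fourth moments of increments. For every $n \geq 1$ and every $(s,t) \in [a,b]^2$, Lemma \ref{lem:compo_gaussien} ensures that $P_{[R_n]}^{s,t}$ is centered Gaussian with unit marginal variances and covariance $K_n(s,t)$, so the law of $X_t - X_s$ under $P_{[R_n]}^{s,t}$ is $\NN(0, 2(1-K_n(s,t)))$. The standard formula for Gaussian fourth moments then gives
\[ \int_{\R^2} (x-y)^4 \, P_{[R_n]}^{s,t}(dx,dy) = 3 \bigl( 2 (1-K_n(s,t)) \bigr)^2. \]
Feeding in the uniform bound $1-K_n(s,t) \leq M |s-t|$ provided by Lemma \ref{lem:inequality_mark_variance} (whose two points match exactly the two hypotheses of the theorem), I obtain the uniform estimate
\[ \sup_{n \geq 1} \int_{\R^2} (x-y)^4 \, P_{[R_n]}^{s,t}(dx,dy) \leq 12\, M^2 \,|s-t|^2, \]
and the analogous inequality for the candidate limit, using $1 - e^{-u} \leq u$: in point $1$, $M$ may be replaced by $\sup_{[a,b]} \a^K$, and in point $2$ by $\a$.

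The final step is then mechanical. I would first apply the Kolmogorov-Chenstov criterion (Theorem \ref{them:tighness_criteria}) with moment exponent $4$, excess regularity $1$ and constant $12 M^2$, in order to replace $\proj^{[a,b]}_\# P_{[R_n]}$ and the candidate limit by versions concentrated on $\CC([a,b], \R)$, so that they may be viewed as elements of $\PP(\CC([a,b], \R))$. I would then apply the Kolmogorov tightness criterion with the same triple to establish tightness of the sequence $\{\proj^{[a,b]}_\# P_{[R_n]}\}_{n \geq 1}$ in $\PP(\CC([a,b], \R))$. Tightness together with uniqueness of the finite-dimensional limit forces weak convergence to $\proj^{[a,b]}_\# P_{\a^K}$ in point $1$ and to $\proj^{[a,b]}_\# P_\a$ in point $2$.

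The main obstacle of this argument has in fact already been handled upstream: it is the uniform linear covariance estimate of Lemma \ref{lem:inequality_mark_variance}, obtained by carefully controlling the second-order behaviour of $K_n$ in terms of the decay rate of the correlation. Once that estimate is available, the passage from finite-dimensional to weak convergence reduces to a textbook computation of Gaussian fourth moments combined with the two Kolmogorov criteria, and no further delicate analysis is required.
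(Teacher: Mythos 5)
Your proposal is correct and follows essentially the same route as the paper: finite-dimensional convergence via Proposition \ref{pro:cov_proc_Markovinifie} (resp. Proposition \ref{pro:finite_dim_conv_stati}), the uniform bound $1-K_n(s,t)\leq M|s-t|$ from Lemma \ref{lem:inequality_mark_variance}, the Gaussian fourth-moment identity, and the two Kolmogorov criteria to upgrade to weak convergence on $\CC([a,b],\R)$. The only (immaterial) divergence is that you bound the fourth moment of the limit directly from its explicit exponential covariance via $1-e^{-u}\leq u$, whereas the paper transfers the uniform bound to the limit by lower semicontinuity and the Portmanteau theorem; both are valid.
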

	\begin{proof}
		\begin{enumerate}
			\item Applying Lemma \ref{lem:inequality_mark_variance}, there exists $ M \in \R_+$  such that for  every $ (s,t,n) \in [a,b]^2 \times \N^*$, we have $1-K_n(s,t) \leq  M|t-s|.$
			Hence, \[\int_{\R^2} |x-y|^2 dP_{[R_n]}^{s,t}(x,y) = K_n(s,s) + K_n(t,t) - 2 K_n(s,t) = 2(1-K_n(s,t)) \leq 2M\cdot |s-t|.\]
			Since for a centered Gaussian random variable $X$, one has $\E(X^4) = 3\E(X^2)^2$,
			\begin{equation}\label{eq:inequality_tight_mark}
				\int_{\R^2} |x-y|^4 dP_{[R_n]}^{s,t}(x,y) = 3 \left( \int_{\R^2} |x-y|^2 dP_{[R_n]}^{s,t}(x,y)\right)^2 \leq 12M^2|s-t|^2.
			\end{equation} As $\lim_{n \to +\infty} P_{[R_n]}^{s,t} =  P'^{s,t}$ and $(x,y) \mapsto |x-y|^4$ is lower semicontinuous and bounded by below, the Portmanteau theorem gives $\int_{\R^2} |x-y|^4 dP^{s,t}(x,y) \leq \liminf_{n \to + \infty} \int_{\R^2} |x-y|^4 dP_{[R_n]}^{s,t}(x,y) \leq  12M^2|s-t|^2$. According to the Kolmogorov-Chenstov criterion, we obtain that $\proj_{\#}^{[a,b]} P'$ is concentrated on continuous paths and $(\proj_{\#}^{[a,b]}P_n)_{n \geq 1}$ is a sequence of measures concentrated on continuous paths. Since our measures are concentrated on continuous paths, according to Inequality \eqref{eq:inequality_tight_mark}, the Kolmogorov tightness criteria applies and   $(\proj^{[a,b]}_\#P_{[R_n]})_{n \geq 1}$ is tight. According to Point $1.$ of Proposition \ref{pro:cov_proc_Markovinifie}, we have $\proj_{\#}^{[a,b]} P_{[R_n]} \xrightarrow[n \to +\infty]{\fd} \proj_\#^{[a,b]} P_{\a^K}$. Combined with tightness, this implies  $\proj_{\#}^{[a,b]} P_{[R_n]} \xrightarrow[n \to +\infty]{w} \proj_\#^{[a,b]} P_\a^K$, \ie , the wanted result.
			\item Same proof as the first point, but  applied to the sequence $(R_n)_{n \geq 1} \in \A$ defined by  $R_n := s_n \Z \cap [-n,n]$. According to the second Point of Lemma \ref{lem:inequality_mark_variance}, we also obtain  Inequality \eqref{eq:inequality_tight_mark}, whereas the finite-dimensional convergence is given by Point $2.$ of Proposition \ref{pro:finite_dim_conv_stati}.
		\end{enumerate}
	\end{proof}
	
	\begin{remq}\label{remq:fd_to_wiener_centered}
		In Theorem \ref{them:fd_to_wiener},  $P$ is centered and $K(t,t) = 1$ for every $t \in \R$. However, both hypotheses are only there to simplify the statement and the proof. Assume instead that $P$ is non-centered with mean function $m$ and that, for every $t \in \R$, we have  $K(t,t) > 0$ . For every $a< b$, the map $f : (x_t)_{t \in [a,b]} \mapsto \left(\sqrt{K(t,t)}x_t + m(t) \right)_{t \in [a,b]}$ is $\left(\sup_{t \in [a,b]}\sqrt{K(t,t)}\right)$-Lipschitz, hence continuous for the uniform norm. Hence, one can apply Theorem \ref{remq:fd_to_wiener_centered} to the normalized measure $f^{-1}_\# P$ and push forward the obtained convergence results by $f$, which is continuous\footnote{To apply Theorem \ref{them:fd_to_wiener}, notice that $L^K$ and $\a^K$ are invariant by renormalization, \ie ,\ $L^{c_K} = L^K$ and $\a^{c_K} = \a^K $. }. At the end, we obtain \[ \forall (R_n)_{n \geq 1} \in \A,  \forall a<b \in \R^2, \proj^{[a,b]}_\# P_{[R_n]} \xrightarrow[n \to + \infty]{w} \proj^{[a,b]}_\# Q,\] where $Q$ is the Gaussian process with mean function $m$ and with covariance function $K'$ given by \[K'(s,t) = K(s,s)^{1/2} K(t,t)^{1/2}\exp\left(-\int_s^t \a^K(u) du \right).\]
	\end{remq}

	We now prove Theorem \ref{them:mimicking_intro} of page \pageref{them:mimicking_intro} and add to it a result about the underlying dynamics the mimicking process. In Point \ref{Existence} and \ref{Point:Uniqueness}, we prove that our mimicking problem has a unique solution. In Point \ref{Point:SDE}, under a regularity assumption on the mean function and the variance function , we characterize the solution of this mimicking problem as the solution of a SDE. In Point \ref{Point:mim_markov_transf}, we show that the solution of the mimicking problem is obtained as the strong global Markov transform of the initial process.

	\begin{them}\label{them:mimicking}
		Let  $X=(X_t)_{t \in \R}$ be a Gaussian process with  continuous covariance function $K$ and positive variance function. Assume $\a^K$, the instantaneous decay rate of $K$, is well-defined and continuous. 		
		\begin{enumerate}
			\item\label{Existence} \emph{Existence:} There exists a Gaussian process $Y =(Y_t)_{t \in \R}$ with covariance $K'$ satisfying:
			\begin{itemize}
				\leftskip=0.3in
				\item[(1)] For every $t \in \R$, $\text{Law}(X_t) = \text{Law}(Y_t)$;
				\item[(2)] $\a^{K'} = \a^{K}$;
				\item[(3)] $(Y_t)_{t \in \R}$ is a Markov process.
			\end{itemize}
			Moreover, for every $s<t \in \R^2$,
			\begin{equation}\label{eq:univ_conv}
				\sup_{v \in  [s,t] } \left| \a^K(v) - L_v^{K'}(h) \right| \dcv{h \to 0^+} 0.
			\end{equation} 
			
			If a Gaussian process $Y$ satisfies $(1),(2)$ and $(3)$, we allow ourselves to say that $Y$ is a \emph{mimicking process} of $X.$
			\item\label{Point:Uniqueness} \emph{Uniqueness in law:} Every mimicking process of $X$ with covariance function $K' :\R^2 \to \R$ has the same mean function as $(X_t)_{t \in \R}$ and, for every $s<t \in \R^2$,
			\begin{equation}\label{eq:transfor_formule_cov}
				 K'(s,t) = K(s,s)^{1/2}K(t,t)^{1/2} \exp\left(-\int_s^t \a^K(u) du\right).
			\end{equation}
			\item\label{Point:SDE} \emph{Underlying dynamic of the mimicking process: } Assume $m : t \mapsto \E(X_t)$ and $\sigma : t \mapsto K(t,t)^{1/2}$ are continuously differentiable. Then strong existence and strong uniqueness \footnote{Strong uniqueness is meant in the sense of \cite[Chapter $5$, Definition $2.3$]{karatzas_brownian_1988}. By strong existence, we mean existence of a strong solution for any given brownian motion and independent condition. We refer to \cite[Chapter $5$, Definition $2.1$]{karatzas_brownian_1988} for the definition of a strong solution.} hold for the  SDE 
			\begin{equation}
				\begin{cases*}\label{eq:SDE_complet}
					Z_t =\left[m'(t) +(\sigma'(t)-\a^K(t))Z_t\right]dt + \s(t)\sqrt{2 \a^K(t)}dB_t \\
					Z_0 \sim \NN(0,1)
				\end{cases*}
			\end{equation}

			 Moreover, the\footnote{Since strong uniqueness holds, this process is unique up to indistinguishability.} law of its solution is the law of the mimicking process of $X$ (restricted to $\R_+$).
			\item\label{Point:mim_markov_transf} \emph{The mimicking process is a Markov transform (under a reinforced condition):} Assume \eqref{eq:correlation} is verified, \ie ,\ $$\sup_{v \in [s,t]} \left| \a^K(v) - \frac{1}{h}\left(1 - \frac{K(v,v+h)}{\sqrt{K(v,v)}\sqrt{K(v+h,v+h)}} \right) \right| \dcv{h \to 0^+} 0,$$
			for every $s<t \in \R^2.$ Let  $(R_n)_{n \geq 1} \in \A$ be an admissible sequence and $Y$ be a mimicking process. For every $n \geq 1$, we denote by $X^{R_n}$ the transformation of $X$ made Markov at times $R_n$. Then $(X^{R_n})_{n\geq 1}$ and $Y$ almost surely have continuous paths and $X^{R_n}$ converges weakly to $Y$ on compact sets\footnote{This means that, for every $a<b \in \R^2$, \ $\text{Law}\left((X^{R_n}_t)_{t \in [a,b]}\right) \xrightarrow[n \to + \infty]{w}  \text{Law}\left((Y_t)_{t \in [a,b]} \right).$}. In particular, the strong global Markov transform of $X$ is the mimicking process of $X$.
		\end{enumerate}

	\end{them}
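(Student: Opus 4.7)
The plan is to first derive the covariance formula \eqref{eq:transfor_formule_cov} from the Markov and instantaneous decorrelation hypotheses (giving Point \ref{Point:Uniqueness}), then construct the mimicking process by scaling and translating the Ornstein--Uhlenbeck-type process $P_{\a^K}$ of Proposition \ref{pro:alpha_proces} (Point \ref{Existence}), verify that it solves the SDE \eqref{eq:SDE_complet} (Point \ref{Point:SDE}), and finally chain Theorem \ref{them:Markov_stationnaire}, Proposition \ref{pro:cov_proc_Markovinifie} and Theorem \ref{them:fd_to_wiener} to promote local information to weak convergence on compact sets (Point \ref{Point:mim_markov_transf}).

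For Point \ref{Point:Uniqueness}, let $Y$ be a mimicking process of $X$ with continuous non-singular covariance $K'$. The equality of $1$-marginals fixes the mean function and gives $K'(t,t)=K(t,t)=:v(t)$. By Remark \ref{rq:pos_markov}, there are continuous positive $f,g$ with $K'(s,t)=f(s)g(t)$ for every $s \le t$. Setting $\varphi := g/f$, a direct computation shows $c_{K'}(s,t)=\varphi(t)/\varphi(s)$ for $s \le t$, and $\varphi$ is non-increasing since $c_{K'} \le 1$. The identity $\a^{K'}=\a^K$ becomes $\lim_{h \to 0^+} h^{-1}(1-\varphi(t+h)/\varphi(t)) = \a^K(t)$, i.e.\ $\ln \varphi$ has continuous right derivative $-\a^K$; a standard fact then upgrades $\ln \varphi$ to $C^1$, so $\varphi(t)=\varphi(0)\exp(-\int_0^t \a^K(u)\,du)$, which yields \eqref{eq:transfor_formule_cov}. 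For Point \ref{Existence}, I define $K'$ by \eqref{eq:transfor_formule_cov} and note that, with $u(t):=\sqrt{v(t)}$ and $K_{\a^K}$ as in Proposition \ref{pro:alpha_proces}, one has $K'(s,t)=u(s)u(t)K_{\a^K}(s,t)$. Hence $Y:=(m(t)+u(t)Z_t)_{t \in \R}$, with $(Z_t)_{t \in \R}$ of law $P_{\a^K}$, is a Gaussian Markov process with the correct $1$-marginals and covariance $K'$. The uniform estimate \eqref{eq:univ_conv} then follows from $c_{K'}(v,v+h)=\exp(-\int_v^{v+h}\a^K)$, uniform continuity of $\a^K$ on compact sets, and a Taylor expansion.

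For Point \ref{Point:SDE}, the drift and diffusion coefficients in \eqref{eq:SDE_complet} are continuous in $t$ and Lipschitz in $Z$, so classical SDE theory gives strong existence and uniqueness. To identify the law of the solution, substitute $Z_t=m(t)+\s(t)W_t$ and apply It\^o's formula to find that $W$ is the time-inhomogeneous Ornstein--Uhlenbeck process $dW_t=-\a^K(t)W_t\,dt+\sqrt{2\a^K(t)}\,dB_t$ with $W_0 \sim \NN(0,1)$. A variation-of-constants computation shows $W$ is a centred Gaussian process with $\E(W_sW_t)=\exp(-\int_s^t \a^K(u)\,du)$ for $s \le t$, so $Z$ has mean $m$ and covariance \eqref{eq:transfor_formule_cov}, and is Markov; by Point \ref{Point:Uniqueness} it is the mimicking process.

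For Point \ref{Point:mim_markov_transf}, hypothesis \eqref{eq:correlation} is precisely \eqref{eq:conv_fini} of Theorem \ref{them:Markov_stationnaire} after the renormalization of Remark \ref{remq:commentaire_hyp_markovinification}(1), so $Y$ is the strong local Markov transform of $X$. Proposition \ref{pro:cov_proc_Markovinifie}(1) promotes this to the strong global Markov transform, giving $P_{[R_n]} \xrightarrow[n \to +\infty]{\fd} \text{Law}(Y)$ for every admissible $(R_n)$. Path continuity of the $X^{R_n}$ and of $Y$ on any compact $[a,b]$, together with tightness of $(\proj^{[a,b]}_\# P_{[R_n]})_{n \ge 1}$, follows by applying the Kolmogorov--Chenstov and Kolmogorov tightness criteria of Theorem \ref{them:tighness_criteria} to the uniform bound $1-K_n(s,t) \le M|s-t|$ of Lemma \ref{lem:inequality_mark_variance}(1), exactly as in Theorem \ref{them:fd_to_wiener} and Remark \ref{remq:fd_to_wiener_centered}. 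Combining finite-dimensional convergence with tightness yields the claimed weak convergence on $[a,b]$. The main technical obstacle is the differentiability step in Point \ref{Point:Uniqueness}: one must carefully turn the one-sided limit encoding $\a^{K'}=\a^K$ into a $C^1$ statement on $\ln \varphi$ so that the exponential form of $\varphi$ can be extracted; Remark \ref{rq:pos_markov} is what makes the whole analysis tractable by reducing the two-variable problem to a one-variable one.
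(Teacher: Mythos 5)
Your proposal is correct and follows essentially the same route as the paper: construct $Y$ from $P_{\a^K}$ of Proposition \ref{pro:alpha_proces} rescaled by the standard deviation and prove \eqref{eq:univ_conv} by a Taylor expansion with uniform continuity of $\a^K$; derive \eqref{eq:transfor_formule_cov} from the multiplicative structure of the correlation (via Remark \ref{rq:pos_markov}) together with the continuity of the right derivative of its logarithm; reduce the SDE to a time-inhomogeneous Ornstein--Uhlenbeck equation solved by variation of constants; and obtain Point \ref{Point:mim_markov_transf} from Theorem \ref{them:fd_to_wiener} and Remark \ref{remq:fd_to_wiener_centered}. The only slip is cosmetic: from $K'(s,t)=f(s)g(t)$ one gets $c_{K'}(s,t)=\sqrt{\varphi(t)/\varphi(s)}$ with $\varphi=g/f$, so you should take $\varphi=\sqrt{g/f}$ to obtain the ratio form, which changes nothing in the subsequent argument.
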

	
	\begin{proof}
		We write $\a$ instead of $\a^K$. To prove \emph{Point \ref{Existence}}, we denote by $P$ the law of $X$. According to Proposition \ref{pro:alpha_proces}, the Gaussian measure $Q$ with same mean function as $P$ and covariance function $K'$ given by Formula \eqref{eq:transfor_formule_cov} is well defined and Markov.  We fix a process $Y = (Y_t)_{t \in \R}$ with law $Q.$ For every $t \in \R$, $\text{Law}(Y_t) = \NN\left(\E(Y_t),K'(t,t)\right) = \NN\left(\E(X_t),K(t,t)\right) = \text{Law}(X_t)$, so $Y$ satisfies Condition $(1)$.  Let $\theta$ be defined by 
		\begin{equation*}
			\theta : x \in \R \mapsto 
			\begin{cases}
				\frac{e^x - 1 - x}{x} & \text{if } x \neq 0 \\ 0 & \text{if } x = 0
			\end{cases} .
		\end{equation*}
		 It is continuous and satisfies $e^x = 1 + x + x \theta(x)$ for every $x \in \R.$ Hence, for every $v \in \R$, 
		\begin{align*}
			h^{-1}\left( 1 - c_{K'}(v,v+h) \right) &= h^{-1} \left( 1 - \exp\left(-\int_v^{v+h} \a(u) du \right) \right)\\ &= h^{-1}\int_v^{v+h} \a(u) dx + h^{-1}\left(\int_v^{v+h} \a(u) du \right) \theta\left(\int_v^{v+h} \a(u) du\right).
		\end{align*}
	To show \eqref{eq:univ_conv}, fix $s<t \in \R^2.$ For every $h \in ]0,1]$, set \[M_{h} := \sup \ens{\left| \a(v) - h^{-1} \int_v^{v+h} \a(u)du\right|}{v \in [s,t], 0 < h \leq h}.\]Since 
	\begin{align*}
		M_{h} &\leq \sup_{v \in [s,t]} h^{-1} \int_v^{v+h} \left| \a(v) - \a(u) \right| du \\
		&\leq \sup \ens{|\a(w)-\a(u)|}{(w,u) \in [s,t+1]^2, |w-u| \leq h}
	\end{align*}
	and $\a$ is uniformly continuous on $[s,t+1]$, we get $\lim_{h \to 0^+} M_{h} = 0.$ Hence, for every $v \in [s,t] $,
	\begin{align*}
		\left|\a(v) -L^{K'}(v,v+h) \right| &= \left| \a(v) - h^{-1}\int_v^{v+h} \a(u) du - h^{-1}\left(\int_v^{v+h} \a(u) du \right)  \theta\left(-\int_v^{v+h} \a(u) du\right)  \right| \\
		&\leq \left|  \a(v) - h^{-1}\int_v^{v+h} \a(u) du \right| + \left| h^{-1}\left(\int_v^{v+h} \a(u) du \right) \theta\left(-\int_v^{v+h} \a(u) du\right) \right| \\
		&\leq M_{h} + \left(M_{h} + \sup_{u \in [s,t]} |\a(u) | \right) \sup \ens{|\theta(x)|}{|x| \leq h \left(M_{h} + \sup_{u \in [s,t]} |\a(u) | \right)} \\
		& =: N_{h},
	\end{align*}
	where we used $$\left| h^{-1}\int_v^{v+h} \a(u) du \right| \leq  \left| h^{-1}\int_v^{v+h} \a(u) du - \a(v) \right| + |\a(v)| \leq M_{h^*} + \sup_{u \in [s,t]} |\a(u)|.$$ Hence,  
	\begin{equation*}
		\sup_{v \in [s,t]} \left| \a^K(v) - L_v^{K'}(h) \right| \leq N_{h}.
	\end{equation*}  
	Since $\lim_{h \to 0^+} N_{h}=0$, this proves Equation \ref{eq:univ_conv} and in particular Condition $(2).$ As $Q$ is Markov and $\text{Law}(Y) = Q$, the process $Y$ is Markov, \ie , condition $(3)$. To prove  \emph{Point \ref{Point:Uniqueness}}, we consider a Gaussian process $Y$ with covariance function $K'$ satisfying Conditions $(1),(2)$ and $(3)$. According to hypothesis $(1)$, $Y$ clearly has the same mean function as $X$ and we are left to prove Formula \eqref{eq:transfor_formule_cov}. According to Remark \ref{rq:pos_markov}, we can set $g_s : t \in \R \mapsto -\log\left(c_{K'}(s,t)\right) \in \R_+$ for every $s \in \R$. We have $g_s(s) = 0$ and $c_{K'}(s,t+h) = c_{K'}(s,t)c_{K'}(t,t+h)$  for every $(t,h) \in \R \times \R_+^*$. By definition of $\a$, we have :
	\begin{align*}
		h^{-1} (g_s(t+h)-g_s(t)) &= - h^{-1}\left(\ln(c_{K'}(s,t+h)) - \ln(c_{K'}(s,t))\right) \\
		&=-h^{-1} \ln(c_{K'}(s,t+h)/c_{K'}(s,t)) \\
		&= -h^{-1}\ln(c_{K'}(t,t+h)) \\
		&= -h^{-1} \left(c_{K'}(t,t+h)-1 \right) \left[1+\ee\left(c_{K'}(t,t+h)-1\right)\right]\\
		&\dcv{h \to 0^+} \a(t),
	\end{align*} 
	where $\ee$ is defined in Notation \ref{nott:DL_Taylor} and the convergence is obtained using Condition $(3).$ This implies that $g_s(t) = g_s(s) + \int_s^t \a(u)du = \int_s^t \a(u) du$, \ie ,\ Formula \eqref{eq:transfor_formule_cov}. To prove  \emph{Point \ref{Point:SDE}}, put $b(t,x) := m'(t) +(\sigma'(t)-\a(t))x $ and $\sigma(t,x) := \sigma(t) \sqrt{2\a(t)}.$ We now prove strong uniqueness holds for $(b,\sigma)$. According to \cite[Chapter $5$,Theorem $2.5$]{karatzas_brownian_1988}, it is sufficient to prove that for every $T>0$ and $n \geq 0$, there exists $ K_{T,n} \in \R_+$ such that:  
	\begin{equation}\label{eq:uniqueness_SDE}
		\forall x, y \in [-n,n], \forall t \in [0,T], \left|b(t,x) - b(t,y) \right| + \left|\s(t,x)- \sigma(t,y) \right| \leq K_{T,n} |x-y|.
	\end{equation}
	By continuity of $\a$, the constant $K_{T,n} := \sup_{t \in [0,T]} |\s'(t)-\a(t)|$ satisfies \eqref{eq:uniqueness_SDE} and strong uniqueness holds for $(b,\s)$. For strong existence, we have to prove the existence of  a strong solution for any probability space $(\Omega,\FF,\mathbb{P})$ endowed with a brownian motion $(B_t)_{t \geq 0}$ and an initial condition $Z$ independant from the brownian motion. First, prove the existence of a strong solution for the SDE \begin{equation}\label{eq:SDE}
		\begin{cases*}
			d\ti{Z}_t = \left( - \a (t) \ti{Z}_t \right) dt + \sqrt{2 \a(t)} d B_t \\
			\ti{Z}_0 = Z
		\end{cases*}.
	\end{equation}
According to \cite[Chapter 5, Section 6, Page 354]{karatzas_brownian_1988}, a strong solution to the SDE \eqref{eq:SDE} is the process $(\ti{Z}_t)_{t \geq 0}$ defined by \[\ti{Z}_t := \Phi(t)Z + \int_0^t \frac{\Phi(t)}{\Phi(u)} \sqrt{2 \a(u)} d B_u,\] where $\Phi(t) := \exp\left( - \int_0^t \a(u) du \right).$ Hence, the process $(\ti{Z}_t)_{ t \geq 0}$ is centered Gaussian and its covariance function $K'' : (s,t) \mapsto \E(\ti{Z}_s\ti{Z}_t)$ is computed as follows. For every $s<t \in \R^2$,

\begin{align*}
	K''(s,t) &= \E\left(  \Phi(s) \Phi(t) Z^2 \right) + \E\left( \int_0^s \frac{\Phi(s)}{\Phi(u)} \sqrt{2\a(u)}dB_u \int_0^t \frac{\Phi(t)}{\Phi(v)}  \sqrt{2\a(v)}dB_v\right) \\
	&= \Phi(s)\Phi(t) + \Phi(s) \Phi(t) \int_0^s \frac{2 \a(u)}{\Phi(u)^2} du \\
	&= \Phi(s) \Phi(t) \left( 1 + \int_0^s 2 \a(u) \exp \left( \int_0^u 2 \a(v) dv \right) du \right) \\
	&= \Phi(s)\Phi(t) \left( 1 + \left[ \exp  \left( \int_0^u 2 \a(v) dv \right)\right]_0^s\right) \\
	&= \Phi(s)\Phi(t)\left(1 + \Phi(s)^{-2} -1 \right) = \Phi(t)/\Phi(s) = K_\a(s,t).
\end{align*}
Hence $P_\a$ and  $\text{Law}((\ti{Z}_t)_{t \geq 0})$ are two Gaussian measures with same mean and covariance function, thus are equal. Consider now the process ${Z}_t := m(t) + \sigma(t)\ti{Z}_t.$ This process is Gaussian, has mean function $m$ and covariance function given by $\eqref{eq:transfor_formule_cov}$. To obtain the fact that $(Z_t)_{t \geq 0}$ is solution of the SDE \eqref{eq:SDE_complet}, we just apply the Itô formula with $f(t,x) = m(t) + \sigma(t)x$ and use the fact that $(\ti{Z}_t)_{t \geq 0}$ is a solution of the SDE \eqref{eq:SDE}. We are left to prove \emph{Point \ref{Point:mim_markov_transf}}. Since $\text{Law}(X^{R_n}) = P_{[R_n]} $, we have to prove $\proj^{[a,b]}_\# P_{[R_n]} \xrightarrow[n \to + \infty]{w} \proj^{[a,b]}_\# P'.$ This follows from  Theorem \ref{them:fd_to_wiener} and Remark \ref{remq:fd_to_wiener_centered}.
	\end{proof}

	The following numerical simulation of trajectories of a Gaussian process with law $P_\a$  and trajectories of solutions to the SDE $\eqref{eq:SDE}$ give an illustration of Point $\ref{Point:mim_markov_transf}$ of Theorem \ref{them:mimicking}. The Gaussian process is simulated by discretizing time and using the Choleski decomposition, while our SDE is simulated  with the Euler-Maruyama algorithm.
	\begin{center}
		\begin{figure}
			\includegraphics[scale = 0.9]{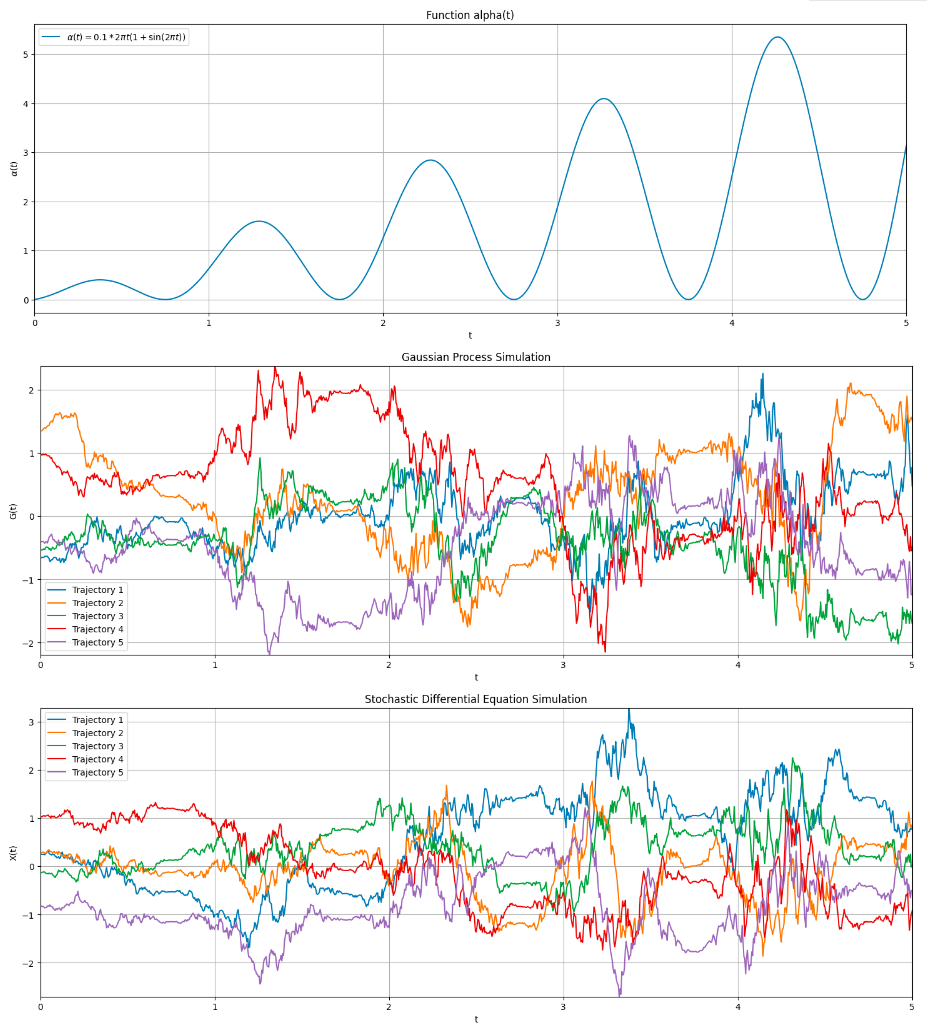}
			\caption{Comparison of the trajectories of the SDE \eqref{eq:SDE}   with these of the Gaussian measure $P_\a$.}
		\end{figure}
	\end{center}
	
\begin{remq}\label{them:time_changed_OU}
	Suppose $(X_t)_{t \in \R}$ is a standard stationary Ornstein-Uhlenbeck process with parameter $1$, $\a$ is a continuous non-negative function and set $ \Phi : t\in \R \mapsto \int_0^t \a(u) du$. Then $\left(X_{\Phi(t)}\right)_{t\in \R}$ has law $P_\a$. Indeed, $\left(X_{\Phi(t)}\right)_{t\in \R}$ is a centered Gaussian process satisfying
	$\E(X_\Phi(s)X_{\Phi(t)}) = \exp\left(-|\Phi(t)-\Phi(s)|\right) = \exp\left( -\int_s^t \a(u) du \right) .$
\end{remq}

\textbf{\large{Acknowledgment.}} I would  like to express my deep gratitude to Nicolas Juillet for introducing me to this research problem and for his constant support and valuable suggestions throughout the writing process.

\vspace{0.5cm}
\textbf{\large{Copyright notice}:}
\begin{minipage}{1.8cm}
	\includegraphics[width=1.8cm]{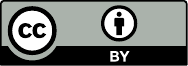}
\end{minipage} 
 \vspace{0.1cm}This research was funded, in whole or in part, by the Agence nationale de la recherche (ANR), Grant ANR-23-CE40-0017. A CC-BY public copyright license has been applied by the authors to the present document and will be applied to all subsequent versions up to the Author Accepted Manuscript arising from this submission, in accordance with the grant’s open access conditions.

	\bibliographystyle{plain}
	\bibliography{document.bib}
	
\vspace{0.5cm}
	
\begin{minipage}{18cm}
	\emph{Armand Ley} -- IRIMAS, UR 7499, Université de Haute-Alsace \\
	18 rue des Frères Lumière, 68 093 Mulhouse, France \\
	Email : \texttt{armand.ley@uha.fr}
\end{minipage}

\end{document}